\def\newaliasedtheorem#1[#2]#3{
  \newaliascnt{#1@alt}{#2}
  \newtheorem{#1}[#1@alt]{#3}
  \expandafter\newcommand\csname #1@altname\endcsname{#3}
}
\theoremstyle{plain}
\newtheorem{theorem}{Theorem}[section]
\newtheorem*{NTEO}{Theorem}
\newtheorem*{NCOR}{Corollary}
\theoremstyle{remark}
\theoremstyle{definition}
\newtheorem{Cond}{Condition}
\theoremstyle{remark}
\numberwithin{equation}{section}
\def\eps{\varepsilon}
\DeclareMathOperator{\tr}{tr}
\def\R{\mathbb R}
\def\N{{\mathbb N}}
\DeclareMathOperator{\dv}{div}
\DeclareMathOperator{\Lip}{Lip}
\DeclareMathOperator{\spn}{span}
\DeclareMathOperator{\curl}{curl}
\DeclareMathOperator{\Ker}{Ker}
\DeclareMathOperator{\rank}{rank}
\DeclareMathOperator{\Sym}{Sym}
\DeclareMathOperator{\cof}{cof}
\DeclareMathOperator{\id}{id}
\newcommand{\M}{\mathbb{M}}
\DeclareMathOperator{\im}{Im}
\newcommand{\E}{\mathds{E}}
\DeclareMathOperator{\spt}{spt}
\DeclareMathOperator{\dist}{d}
\title{On the constancy theorem for anisotropic energies through differential inclusions}
\author[J. Hirsch and  R. Tione]{Jonas Hirsch \and Riccardo Tione}
\address{Jonas Hirsch
\hfill\break Universit\"at Leipzig, Mathematisches Institut, Augustusplatz 10, 04109 Leipzig, Germany}
\email{hirsch.jonas@math.uni-leipzig.de}
\address{Riccardo Tione  
\hfill\break  EPFL B, Station 8, CH-1015 Lausanne, CH}
\email{riccardo.tione@epfl.ch}
\begin{document}
\maketitle

\begin{abstract}
In this paper we study stationary graphs for functionals of geometric nature defined on currents or varifolds. The point of view we adopt is the one of differential inclusions, introduced in this context in the recent paper \cite{DLDPKT}. In particular, given a polyconvex integrand $f$, we define a set of matrices $C_f$ that allows us to rewrite the stationarity condition for a graph with multiplicity as a differential inclusion. Then we prove that if $f$ is assumed to be non-negative, then in $C_f$ there is no $T'_N$ configuration, thus recovering the main result of \cite{DLDPKT} as a corollary. Finally, we show that if the hypothesis of non-negativity is dropped, one can not only find $T'_N$ configurations in $C_f$, but it is also possible to construct via convex integration a very degenerate stationary point with multiplicity.
\end{abstract}

\tableofcontents

\section{Introduction}

In this paper we continue the study started in \cite{DLDPKT,TR} of functionals arising from geometric variational problems from the point of view of differential inclusions. The energies we consider are of the form
\begin{equation}\label{EN}
\Sigma_\Psi(T)\doteq \int_{E}\Psi(\vec T(x))\theta(x)d\mathcal{H}^m(x),
\end{equation}
defined on $m$-dimensional rectifiable currents (resp. varifolds) $T = \llbracket E,\vec T, \theta \rrbracket$ of $\Omega\times \R^n$, where $\Omega \subset \R^m$ is a convex and bounded open set, and the integrand $\Psi$ is defined on the oriented (resp. non-oriented) Grassmanian space. In order to keep the technicalities at a minimum level, we defer all the definitions of these geometric objects to Section \ref{geom}. The main interest is the regularity of stationary points for energies as in \eqref{EN} satisfying suitable \emph{ellipticity} conditions. From the celebrated regularity theorem of Allard of \cite{ALLARD}, it is known that an \emph{$\eps$-regularity theorem} holds for stationary points of the area functional, namely the case in which $\Psi \equiv 1$. Since then, the question of extending this result to more general energies has remained open. On the other hand, the situation is more understood for minimizers of energies of the form \eqref{EN}, where similar partial regularity theorems are known, see for instance \cite[Ch. 5]{FED}, \cite{SchoenSimon}.
\\
\\
In \cite{DLDPKT}, the second author togheter with C. De Lellis, G. De Philippis and B. Kirchheim already approached this regularity problem through the viewpoint of differential inclusions. Since this work is also based on that viewpoint, let us briefly explain what this means. The strategy of \cite{DLDPKT} consisted first in rewriting \eqref{EN} on a special class of geometric objects, namely multiplicity one graphs of Lipschitz maps, and study the differential inclusion associated to the system of PDEs arising from the stationarity condition. Namely, it can be shown that, see \cite[Sec. 6]{DLDPKT} or Subsection \ref{GM}, to a $C^k$ integrand $\Psi$  as the one appearing in \eqref{EN}, one can naturally associate a $C^k$ function $f: \R^{n\times m}\to\R$ with the property that
\begin{equation}\label{ENF}
\E_f(u) \doteq \int_{\Omega}f(Du(x))dx= \Sigma_{\Psi}(T_u).
\end{equation}
where $T_u = \llbracket\Gamma_u,\vec\xi_u,1\rrbracket$ is the current associated to the graph of $u$ i.e. if $v(x)\doteq(x,u(x))$ is the graph map we have $T_u={v}_{\#} \llbracket \Omega \rrbracket$.
In particular, it is possible to prove, see \cite[Prop. 6.8]{DLDPKT} that $T_u$ is stationary for the energy \eqref{EN} if and only if $u$ solves the following equations:
\begin{equation}\label{outer}
\int_{\Omega}\langle Df(D u),D v\rangle dx = 0, \quad\forall v\in C^1_c(\Omega,\mathbb{R}^n) \vspace{1mm}
\end{equation}
and
\begin{equation}\label{inner}
\int_{\Omega}\langle Df(D u), D u D \phi\rangle dx - \int_{\Omega}f(D u)\dv \phi \; dx = 0,\quad  \forall \phi\in C_c^1(\Omega,\mathbb{R}^m).
\end{equation}
The Euler-Lagrange equation \eqref{outer} corresponds to variations of the form
\[
\frac{d}{d\eps}|_{\eps = 0}\E_f(u + \eps v) = 0,
\]
usually called \emph{outer} variations, and \eqref{inner} corresponds to variations of the form
\[
\frac{d}{d\eps}|_{\eps = 0}\E_f(u\circ(x + \eps \Phi)) = 0,
\]
called \emph{inner} (or \emph{domain}) variations. The second step is to study \eqref{outer} and \eqref{inner} from the point of view of differential inclusions. This amounts to rewrite \eqref{outer}-\eqref{inner} equivalently as
\begin{equation}\label{diffinc}
\left(
\begin{array}{c}
Du\\
A\\
B
\end{array}
\right) \in K_f\doteq
\left\{C\in \R^{(2n + m)\times m}: C =
\left(
\begin{array}{cc}
X\\ 
Df(X)\\
X^TDf(X) - f(X)\id
\end{array}
\right)\right\},
\end{equation}
for $A \in L^\infty(\Omega,\R^{n\times m})$, $B \in L^\infty(\Omega,\R^m)$ with $\dv(A) = 0$, $\dv(B) = 0$.
\\
\\
This paper focuses on the same problem as  \cite{DLDPKT}, i.e. regularity of stationary points for geometric integrands, but with the addition of considering graphs with arbitrary positive multiplicity. This of course enlarges the class of competitors and might allow for more flexibility in the regularity of solutions. In particular, we consider \emph{polyconvex} functions $f$, i.e. $$f(X) = g(X,\Phi(X)),$$ where $g \in C^1(\R^k)$ is a convex function and $\Phi:\R^{n\times m} \to \R^k$ is the vector containing all the minors (subdeterminants) of order larger than or equal to 2 of $X \in \R^{n\times m}$. In analogy with \eqref{outer}-\eqref{inner}, we will be interested in the following system of PDEs
\begin{equation}\label{PDEBETA}
\left\{
\begin{array}{ll}
\displaystyle \int_{\Omega}\langle Df(D u),D v\rangle\beta dx = 0 &\forall v\in C^1_c(\Omega,\mathbb{R}^n) \vspace{1mm}\\ 
\displaystyle \int_{\Omega}\langle Df(D u), D u D \phi\rangle \beta \;dx - \int_{\Omega}f(D u)\dv \phi \beta \;dx = 0\qquad & \forall \phi\in C_c^1(\Omega,\mathbb{R}^m).
\end{array}\right.
\end{equation}
for a Lipschitz map $u \in \Lip(\Omega,\R^n)$, and a Borel function $\beta \in L^\infty(\Omega,\R^+)$. The study of objects with multiplicity is rather natural in the context of stationary rectifiable varifolds or currents. When dealing with these objects, one is interested in showing a so-called \emph{constancy theorem}, see \cite[Theorem 8.4.1]{SIM}. A constancy theorem in the sense of \cite[Theorem 8.4.1]{SIM} asserts that if a stationary (for the area) varifold of dimension $m$ has support contained in a $C^2$ manifold of the same dimension, then the varifold must be given by a fixed multiple of the manifold, so that in particular the multiplicity must be constant. In \cite{DUG}, it was shown that instead of $C^2$, even Lipschitz regularity of the manifold is sufficient to guarantee the validity of the Constancy Theorem. This is connected to the following algebraic fact. If a $C^2$ map $u$ solves \eqref{outer}, then it necessarily solves also \eqref{inner}, hence the system \eqref{outer}-\eqref{inner} reduces to equation \eqref{outer}. Nonetheless, if $u \in C^2$ and solves \eqref{PDEBETA} for a bounded multiplicity $\beta$, then it is not anymore true that $u$ automatically solves the first. One therefore would like to show a priori that the multiplicity is constant and subsequently one is again in the situation given by \eqref{outer}-\eqref{inner}. As for regularity theorems, no general constancy result is known at the moment for general functionals, except for the codimension one case, see \cite{DDH}.
\\
\\
As said, the tools we use are the same as the ones of \cite{DLDPKT}, namely we rewrite \eqref{PDEBETA} as
\begin{equation}\label{diffincf}
\left(
\begin{array}{c}
Du\\
A\\
B
\end{array}
\right) \in
C_f\doteq
\left\{C\in \R^{(2n + m)\times m}: C =
\left(
\begin{array}{cc}
X\\ 
\beta Df(X)\\
\beta  X^TDf(X) - \beta f(X)\id
\end{array}
\right), \text{ for some $\beta$ > 0}\right\},
\end{equation}
again for $A \in L^\infty(\Omega,\R^{n\times m})$, $B \in L^\infty(\Omega,\R^m)$ with $\dv(A) = 0$, $\dv(B) = 0$. Our result is twofold. First, we will show that, if $f$ is assumed to be non-negative, then the same result as \cite[Theorem 1]{DLDPKT} holds, namely in $C_f$ there are no $T'_N$ configurations. Secondly, we show the optimality of this result by proving that if we drop the hypothesis on the positivity of $f$, one can not only embed a special family of matrices in $C_f$, but one can actually construct a stationary current for the energy give in \eqref{EN} whose support lies on the graph of a Lipschitz and nowhere $C^1$ map. In order to formulate properly these results, we need some terminology concerning differential inclusions.
\\
\\
Differential inclusions are relations of the form 
\begin{equation}\label{difgen}
M(x) \in K \subset \R^{n\times m} \text{ a.e. in }\Omega
\end{equation}
for $M \in L^\infty(\Omega,\R^{n\times m})$ satisfying $\mathscr{A}(M) = 0$ in the weak sense for some constant coefficients, linear differential operator $\mathscr{A}(\cdot)$. To every operator $\mathscr{A}(\cdot)$, one can associate a \emph{wave cone}, denoted with $\Lambda_\mathscr{A}$, that is made of those directions $A$ in which it is possible to have \text{plane wave} solution, i.e. $A \in {\Lambda_{\mathscr{A}}}$ if and only if there exists $\xi \in \R^m$ such that
\[
\mathscr{A}(h((x,\xi))A) = 0,\quad \forall h \in C^1(\R).
\]
In this work, we will not need to consider various differential operators, as we will only work with the mixed div-curl operator introduced in \eqref{diffinc}. In that case, we denote the cone with $\Lambda_{dc}$ and we will introduce it in detail in Section \ref{DI}. Due to the connection of the wave-cone to the existence of oscillatory solution of \eqref{difgen}, a very first step to exclude \text{wild} solutions of \eqref{difgen} is to check that
\begin{equation}\label{RANK?}
A - B \notin \Lambda_{\mathscr{A}}, \quad \forall A,B \in K.
\end{equation}
This is usually quite simple to verify, and indeed we will show in Proposition \ref{RANK} that, if $f$ is positive, then \eqref{RANK?} holds with $\Lambda_{\mathscr{A}} = \Lambda_{dc}$ and $K$ replaced by $C_f$. Property \eqref{RANK?} is in general not sufficient to guarantee good regularity properties of solution of \eqref{difgen}. Indeed, in \cite{SMVS}, S. M\"uller and V. \v Sver\'ak constructed a striking counterexample to elliptic regularity for solutions of
\begin{equation}\label{diffincms}
Dv(x) \in K'_f \doteq \left\{C\in \R^{4\times 2}: C =
\left(
\begin{array}{cc}
X\\ 
Df(X)J
\end{array}
\right)\right\}\subset \R^{4\times 2},
\end{equation}
where the function $f \in C^\infty(\R^{2\times 2})$ is quasiconvex (for the definition of quasiconvex function, we refer the reader to \cite{SMVS}), and $J$ is a matrix satisfying $J = -J^T$ and $J^2 = -\id$. In particular, they were able to show that there exists a Lipschitz and nowhere $C^1$ function $v: \Omega \subset \R^2 \to \R^4$ satisfying the differential inclusion \eqref{diffincms}. Their strategy was subsequently improved by L. Sz{\'{e}}kelyhidi in \cite{LSP} showing that $f$ can be chosen polyconvex. In both cases, $K'_f$ does not contain rank one connections, i.e.
\[
\rank(A-B) = 2, \quad \forall A,B \in K'_f,
\]
and this can proved to be equivalent to \eqref{RANK?} in the case $\mathscr{A} = \curl$. Their strategy was based on showing that in $K'_f$ other suitable families of matrices could be embedded, the so-called $T_N$ configurations. In our situation, since we are dealing with mixed div-curl operators, we need to consider a slightly different version of $T_N$ configurations, that we have named $T'_N$ configurations in \cite{DLDPKT}. We postpone the definition of $T_N$ and $T'_N$ configurations to Section \ref{POSCASE}, but we are finally able to formally state our main positive result:

\begin{NTEO}
If $f\in C^1 (\R^{n\times m})$ is a strictly polyconvex function, then $C_f$ does not contain any set $\{A_1, \ldots , A_N\} \subset \R^{(2n + m)\times m}$ which induces a $T_N'$ configuration, provided that $f(X_1) \ge 0,\dots, f(X_N) \ge 0$, if
\[
A_i = \left(
\begin{array}{c}
X_i\\
Y_i\\
Z_i
\end{array}
\right), \quad X_i,Y_i \in \R^{n\times m}, Z_i \in \R^{m\times m}, \forall i \in \{1,\dots, N\}.
\]
\end{NTEO}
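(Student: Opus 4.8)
The plan is to argue by contradiction. Assume $\{A_1,\dots,A_N\}\subset C_f$ induces a $T_N'$ configuration; by definition (see Section~\ref{POSCASE}) this produces a matrix $P$, directions $C_1,\dots,C_N\in\Lambda_{dc}$ with $\sum_iC_i=0$, and scalars $\kappa_i>1$ such that, setting $P_1=P$ and $P_{i+1}=P_i+C_i$ (indices mod $N$), one has $A_i=P_i+\kappa_iC_i$ and $P_{N+1}=P_1$. Unravelling the staircase produces convex weights $\sigma_{ij}\ge0$, $\sum_j\sigma_{ij}=1$, with $P_i=\sum_j\sigma_{ij}A_j$; in particular $P=\sum_j\lambda_jA_j$ with $\lambda_j:=\sigma_{1j}>0$. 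Writing $A_i=\bigl(X_i,\ \beta_iDf(X_i),\ \beta_i(X_i^TDf(X_i)-f(X_i)\id)\bigr)$ with $\beta_i>0$, and $C_i=(a_i\otimes\xi_i,\,W_i,\,U_i)$ with $W_i\xi_i=U_i\xi_i=0$, I would first record the three blocks of the identity $A_i-P_i=\kappa_iC_i$, namely $X_i-\sum_j\sigma_{ij}X_j=\kappa_i\,a_i\otimes\xi_i$ (the $X$-relation), $\bigl(\beta_iDf(X_i)-\sum_j\sigma_{ij}\beta_jDf(X_j)\bigr)\xi_i=0$ (the $Y$-relation), and $\bigl(\beta_i(X_i^TDf(X_i)-f(X_i)\id)-\sum_j\sigma_{ij}\beta_j(X_j^TDf(X_j)-f(X_j)\id)\bigr)\xi_i=0$ (the $Z$-relation). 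Working with the joints $P_i$ as explicit convex combinations of the $A_j$'s in this way keeps all ``phantom'' joint data out of the computation.

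Next I would extract two ingredients. The first is that $\Phi$ is affine on rank-one lines: since the $X$-relation says the edges are rank one, carrying the whole staircase into the lifted variable $(X,\Phi(X))$ is licit and yields the null-Lagrangian identities $\sum_j\sigma_{ij}\Phi(X_j)=\Phi\bigl(\sum_j\sigma_{ij}X_j\bigr)$ for every $i$ (in particular $\sum_j\lambda_j\Phi(X_j)=\Phi(\bar X)$ for $\bar X=\sum_j\lambda_jX_j$). Because $f=g\circ(\id,\Phi)$ with $g$ strictly convex, these identities make the minor-remainder terms in the first-order expansion of $f$ along the edge $X_i\mapsto\sum_j\sigma_{ij}X_j=X_i-\kappa_i\,a_i\otimes\xi_i$ disappear, so strict convexity of $g$ delivers, for every nondegenerate edge, the strict inequality
\[
\kappa_i\,\bigl\langle Df(X_i),\,a_i\otimes\xi_i\bigr\rangle\;>\;f(X_i)-\sum_j\sigma_{ij}f(X_j).
\]
The second ingredient comes from the $Y$- and $Z$-relations: pairing the $Y$-relation with $a_i$ gives the outer-variation identity $\beta_i\langle Df(X_i),a_i\otimes\xi_i\rangle=\sum_j\sigma_{ij}\beta_j\langle Df(X_j),a_i\otimes\xi_i\rangle$, and pairing the $Z$-relation with $\xi_i$, then using the $Y$-relation to cancel $X_i^T\beta_iDf(X_i)\xi_i$ against $\sum_j\sigma_{ij}\beta_jX_i^TDf(X_j)\xi_i$, gives the inner-variation identity
\[
\sum_j\sigma_{ij}\beta_j\,\bigl\langle (X_j-X_i)\xi_i,\,Df(X_j)\xi_i\bigr\rangle\;=\;-\Bigl(\beta_i f(X_i)-\sum_j\sigma_{ij}\beta_j f(X_j)\Bigr)|\xi_i|^2 .
\]
This last identity is where the multiplicities $\beta_i$ and the numbers $f(X_i),f(X_j)\ge0$ enter simultaneously.

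The final step is a global computation over the closed staircase: one sums the displayed strict inequality and the inner-variation identity against suitable nonnegative weights built from the $\lambda_i$, $\sigma_{ij}$, $\kappa_i$, $\beta_i$, expands the bilinear terms $\langle(X_j-X_i)\xi_i,Df(X_j)\xi_i\rangle$ via the Bregman divergences of $f$ along rank-one directions, and closes the telescope using the null-Lagrangian identities; here $f(X_i)\ge0$ should be exploited exactly as in the proof of Proposition~\ref{RANK} to sign the multiplicity-weighted contributions and force the same ``$\beta$ simultaneously too large and too small'' contradiction as in the two-point case. Degenerate edges, i.e.\ those with $a_i\otimes\xi_i=0$, I would treat apart: there the $X$-relation forces $X_i=\sum_j\sigma_{ij}X_j$ (hence $\Phi(X_i)=\sum_j\sigma_{ij}\Phi(X_j)$), and the $Y$- and $Z$-relations together with $\beta_i>0$ and $f(X_i)\ge0$ collapse the configuration, contradicting that the $A_i$ are pairwise not $\Lambda_{dc}$-connected. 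The hard part will be exactly this combinatorial last step: making the $\beta$-weighted inner-variation identities interact with the unweighted strict inequalities so that the sign information $f(X_i)\ge0$ is enough to conclude — by contrast the polyconvexity and minors bookkeeping, although notation-heavy, is routine once one passes to the lifted variable and uses that $\Phi$ is rank-one affine. (The case $f>0$ of \cite{DLDPKT} should then be recoverable by dropping the staircase and running the same two identities on a single $\Lambda_{dc}$-connection.)
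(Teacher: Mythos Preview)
Your setup and the first two ingredients are essentially the paper's: the joints $P_i=\sum_j\sigma_{ij}A_j$ with $\sigma_{ij}=t^i_j$ are exactly the vectors \eqref{defnt}, your null-Lagrangian identities are Proposition~\ref{analogm}, and your strict inequality is Corollary~\ref{absurd}. Your scalar inner-variation identity (pair the $Z$-relation with $\xi_i$, then use the $Y$-relation) is also correct.

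The gap is precisely where you say ``the hard part will be exactly this combinatorial last step.'' Your scalar identity has the form
\[
\sum_j\sigma_{ij}\beta_j\,\bigl\langle (X_j-X_i)\xi_i,\,Df(X_j)\xi_i\bigr\rangle\;=\;-\Bigl(\beta_i f(X_i)-\sum_j\sigma_{ij}\beta_j f(X_j)\Bigr)|\xi_i|^2,
\]
which couples $Df(X_j)$ for all $j$, whereas the strict inequality you want to contradict involves only $Df(X_i)$. There is no evident telescope or weighted sum that makes these interact to produce the $\nu_\sigma\ge0$ you need at the minimal-$\beta$ index, and ``Bregman divergences along rank-one directions'' does not help because $X_j-X_i$ is not rank one for $j\neq i\pm1$. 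The paper does \emph{not} pair with $\xi_i$ to get a scalar: it multiplies the $Z$-equation by $n_i$ (your $\xi_i$) and, after a long but explicit calculation (Lemma~\ref{TEC}), obtains a \emph{vector} equation
\[
\Bigl(\sum_j k_j(k_j-1)t^i_j\,C_j^TD_j\Bigr)n_i\;=\;\mu_i\,n_i,\qquad \mu_i=k_i\langle C_i,Y_i\rangle-\beta_ic_i+\sum_j\beta_jt^i_jc_j,
\]
so that $\mu_i$ appears as an eigenvalue of a matrix $M_i$ built from the $C_j^TD_j$. Because each $C_j^TD_j$ has image in $\spn(n_j)$ and is trace-free, the $M_i$ are simultaneously upper-triangular in a basis extracted from $\{n_1,\dots,n_N\}$ with diagonal entries among $\{\xi_j\mu_j,\mu\xi_j\mu_j\}$; $\tr M_i=0$ then gives a linear system $W(\xi_j\mu_j)_j=0$, and a separate combinatorial argument (Lemma~\ref{W}) shows $W$ is invertible. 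This forces all $\mu_i=0$, and only then does the positivity $f(X_j)\ge0$ enter, exactly once, to compare $\mu_\sigma$ with $\nu_\sigma$ at the index $\sigma$ minimizing $\beta$.

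In short: the decisive idea you are missing is to keep the $Z$-relation as a vector equation and recognise it as an eigenvalue relation for a trace-free matrix, turning the problem into a finite linear-algebra statement about the kernel of an explicit $N\times N$ matrix. Your degenerate-edge argument also does not work as stated: a single $C_i=0$ does not by itself produce a $\Lambda_{dc}$-connection between two of the $A_j$, so Proposition~\ref{RANK} cannot be invoked to collapse the configuration.
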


This result, as \cite[Theorem 1]{DLDPKT}, shows that it is not possible to apply the convex integration methods of \cite{SMVS,LSP} to show the existence of an irregular solution of the system \eqref{PDEBETA}. This theorem is stronger than \cite[Theorem 1]{DLDPKT}, in the sense that we are able to show \cite[Theorem 1]{DLDPKT} as a corollary:

\begin{NCOR}
If $f\in C^1 (\R^{n\times m} )$ is a strictly polyconvex function (not necessarily non-negative), then $K_f$ does not contain any set $\{A_1, \ldots , A_N\}$ which induces a $T_N'$ configuration.
\end{NCOR}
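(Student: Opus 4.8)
The plan is to deduce the Corollary from the Theorem via the elementary remark that $K_f \subset C_f$ and that adding a constant to $f$ merely translates $K_f$. First I would observe that $K_f$ coincides with the subset of $C_f$ obtained by taking $\beta = 1$ in \eqref{diffincf}, so $K_f \subset C_f$. Consequently, if a set $\{A_1, \dots, A_N\} \subset K_f$ inducing a $T_N'$ configuration happened to satisfy $f(X_i) \ge 0$ for all $i$ (with $X_i$ the top block of $A_i$), the Theorem would immediately yield a contradiction. The whole point is therefore to handle the indices at which $f(X_i) < 0$, and for this one should exploit that only finitely many values $f(X_1), \dots, f(X_N)$ are involved.

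The key computation is the following: for $c \in \R$, the function $g \doteq f + c$ is again strictly polyconvex (a constant shift does not affect the convexity of the representative), and $Dg = Df$; comparing the defining formulas of $K_f$ and $K_g$ one then gets
\[
K_g = \Bigl\{ A - \bigl(0,\ 0,\ c\,\id\bigr)^{T} \ :\ A \in K_f \Bigr\},
\]
i.e. $K_g$ is the translate of $K_f$ by the fixed matrix $-\bigl(0,0,c\,\id\bigr)^{T} \in \R^{(2n+m)\times m}$. Next I would check, directly from the definition of $T_N'$ configuration recalled in Section \ref{POSCASE}, that inducing a $T_N'$ configuration is invariant under adding a common fixed matrix to all the $A_i$; this holds because that definition only involves the pairwise differences $A_i - A_j$ (through their div-curl connections) together with combinatorial data that is itself translation invariant.

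With these two facts in hand, I would argue by contradiction. Suppose $K_f$ contains $\{A_1, \dots, A_N\}$, with $A_i = \bigl(X_i,\ Df(X_i),\ X_i^{T} Df(X_i) - f(X_i)\id\bigr)^{T}$, inducing a $T_N'$ configuration. Choose $c \ge -\min_{1\le i\le N} f(X_i)$ and set $g = f + c$, so that $g(X_i) = f(X_i) + c \ge 0$ for every $i$. By the translation formula above, the set $\{A_i - (0,0,c\,\id)^{T}\}_{i=1}^N$ lies in $K_g \subset C_g$, and by translation invariance it still induces a $T_N'$ configuration, with unchanged top blocks $X_1, \dots, X_N$. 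Applying the Theorem to the strictly polyconvex function $g$ (which is non-negative at each $X_i$) gives a contradiction, which proves the Corollary.

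The argument is short, and the one step requiring genuine care is the translation invariance of $T_N'$ configurations; once the definition in Section \ref{POSCASE} is unwound this is routine, and the remainder is bookkeeping.
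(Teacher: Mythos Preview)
Your argument is correct and is essentially identical to the paper's own proof: both shift $f$ by a constant (the paper takes $F = f - \min_i f(X_i)$, you take $g = f + c$ with $c \ge -\min_i f(X_i)$), observe that this translates $K_f$ only in the $Z$-block, and then apply the Theorem. The one cosmetic difference is that the paper absorbs the translation directly into the base point $(P,Q,R)$ of the $T'_N$ configuration rather than phrasing it as ``translation invariance via pairwise differences,'' but the content is the same.
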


Finally, in Section \ref{SCC}, we show the optimality of the hypothesis of non-negativity of the previous theorem by proving the following:

\begin{NTEO}
There exists a smooth and elliptic integrand $\Psi: \Lambda_2(\R^4)\to \R$ such that the associated energy $\Sigma$ admits a stationary point $T$ whose (integer) multiplicities are not constant. Moreover the rectifiable set supporting $T$ is given by a graph of a Lipschitz map $u: \Omega \to \R^2$ that fails to be $C^1$ in any open subset $\mathcal{V} \subset \Omega$.
\end{NTEO}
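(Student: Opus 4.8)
The plan is to realize the example as a solution of the div-curl differential inclusion \eqref{diffincf} produced by convex integration, and then to transport it back to the geometric picture through the correspondence of Subsection \ref{GM}. Concretely, take $n=m=2$ and look for a strictly polyconvex $f\in C^\infty(\R^{2\times2})$, $f(X)=g(X,\det X)$ with $g$ smooth and uniformly convex, which is \emph{not} non-negative; by the positive result above a sign change is forced, since a $T_N'$ configuration in $C_f$ requires $f(X_i)<0$ for some $i$. Strict polyconvexity of $f$ corresponds, via \eqref{ENF}, to (uniform) ellipticity of a smooth integrand $\Psi$ on $\Lambda_2(\R^4)$ whose restriction along graphs is $f$, and the system \eqref{PDEBETA} is precisely the stationarity condition for a graph current $T_u=\llbracket\Gamma_u,\vec\xi_u,\beta\rrbracket$ with multiplicity $\beta$, i.e. $(Du,A,B)\in C_f$ a.e. for some $A\in L^\infty(\Omega,\R^{2\times2})$, $B\in L^\infty(\Omega,\R^{2\times2})$ with $\dv A=\dv B=0$. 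Thus it suffices to exhibit such an $f$, a nowhere-$C^1$ Lipschitz map $u$, and a bounded non-constant multiplicity $\beta$; choosing the underlying configuration with rational data and rescaling, one moreover arranges $\beta$ to take finitely many \emph{integer} values, which makes $T_u$ an integral current.

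The core of the construction is to place inside $C_f$, for a suitable sign-changing $f$, a $T_N'$ configuration $\{A_1,\dots,A_N\}$ robust enough to seed the convex-integration scheme. I would work in the reverse of the naive order: first fix candidate matrices $X_1,\dots,X_N\in\R^{2\times2}$, prescribed $1$-jets $(f(X_i),Df(X_i))$, and weights $\beta_i>0$, and only afterwards construct a smooth uniformly convex $g$ realizing those jets at the points $(X_i,\det X_i)$; such an interpolation exists as soon as the prescribed data satisfy the finitely many linear inequalities encoding compatibility with convexity, which one checks directly. The parameters are then tuned so that the points
\[
A_i=\begin{pmatrix}X_i\\ \beta_i Df(X_i)\\ \beta_i\big(X_i^TDf(X_i)-f(X_i)\id\big)\end{pmatrix}
\]
form a $T_N'$ configuration in the sense of Section \ref{POSCASE} --- successive differences lying in $\Lambda_{dc}$ with the prescribed combinatorial/ordering structure --- and, crucially, so that the configuration is \emph{stable}: every sufficiently small perturbation of the $A_i$ still spans a $T_N'$ configuration. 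As in \cite{SMVS,LSP}, this stability is exactly what allows one to relax the rigid inclusion \eqref{diffincf} to an open differential inclusion admitting an \emph{in-approximation}.

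Granting such a stable $T_N'$ configuration, the final step is the by-now standard convex-integration / Baire-category argument for div-curl inclusions, carried out with a localization that prevents the oscillation of $Du$ from decaying on any ball, so that the resulting Lipschitz solution $u$ fails to be $C^1$ on every open subset; one then reads off $A$, $B$, $\beta$ from the inclusion, records that $\dv A=\dv B=0$ and that $\beta$ is integer-valued and non-constant, transports everything to $\Omega\times\R^2\subset\R^4$ by the graph map, and verifies that $\Psi$ is a genuine smooth elliptic integrand and that $T=\llbracket\Gamma_u,\vec\xi_u,\beta\rrbracket$ is a stationary integral current with non-constant multiplicity supported on a nowhere-$C^1$ graph. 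I expect the main obstacle to be precisely the middle step: one must simultaneously keep $f$ strictly polyconvex (so that $\Psi$ stays elliptic), keep all $\beta_i$ positive and rational, force at least two of the $\beta_i$ to differ (so the multiplicity is genuinely non-constant), and retain enough non-degeneracy for the in-approximation --- competing demands that confine the configuration to a narrow window of parameters.
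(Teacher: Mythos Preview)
Your overall strategy is viable, but the paper takes a shorter route that bypasses precisely the step you flag as the main obstacle. Instead of running convex integration on the full $6\times 2$ div--curl inclusion by locating a \emph{stable} $T_N'$ configuration inside $C_f$ and building an in-approximation for the coupled system, the paper imposes the extra algebraic constraint (Condition~\ref{c:3}) that $Y_i$ and $Z_i$ are \emph{affine} in $X_i$: $Y_i=AX_i+B$, $Z_i=CX_i+D$ for fixed $2\times2$ matrices $A,B,C,D$. After the $J$-trick converting the divergence rows to curls, any solution of the scalar inclusion $Du\in\{X_1,\dots,X_5\}$ lifts for free to a solution of the full $6\times2$ inclusion via $w_1(x)=Au(x)+Bx$, $w_2(x)=Cu(x)+Dx$. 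The convex integration is therefore carried out only on the $X_i$, which are chosen to form a \emph{large} $T_5$ configuration in $\Sym(2)$ in the sense of \cite{FS}; the nowhere-$C^1$ map $u$ then comes directly from \cite[Theorem~1.2]{FS} together with rigidity for the four-gradient problem. The price is a longer list of algebraic constraints (Conditions~\ref{c:1}--\ref{c:3} simultaneously), which the paper discharges by exhibiting explicit numerical values with integer $\beta_i$. Your approach would also work in principle, but it requires developing the perturbation/in-approximation machinery for $C_f$ itself, whereas the affine reduction lets the paper quote an existing black box.

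One point you genuinely underestimate is the passage from $f$ to the integrand $\Psi$. Strict polyconvexity of $f$ does \emph{not} automatically produce a smooth elliptic $\Psi$ on all of $\Lambda_2(\R^4)$; the integrand must be defined on non-graphical planes as well, and Almgren ellipticity is a global condition. The paper engineers $f$ in the special form $f=\varepsilon\mathcal{A}+g(\cdot,\det(\cdot))$ with $g$ equal to $M\sqrt{1+|\cdot|^2}-L$ outside a large ball (Lemma~\ref{l:1}), so that the perspective function of $h(z)=\varepsilon\sqrt{1+|z|^2}+g(z)$ extends to a convex, positively $1$-homogeneous $\mathcal{G}\in C^\infty(\R^6\setminus\{0\})$ (Proposition~\ref{genextprop}, Corollary~\ref{repres}); $\Psi$ is then $\mathcal{G}$ composed with Pl\"ucker coordinates, and uniform Almgren ellipticity follows from convexity of $\mathcal{G}$ together with the built-in $\varepsilon\mathcal{A}$ cushion (Proposition~\ref{Alm}). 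Without this specific asymptotic shape, both the extension and the ellipticity would need separate, nontrivial arguments.
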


The last Theorem is obtained by embedding in the differential inclusion \eqref{diffincf} what has been named in \cite{FS} \emph{large} $T_N$ configuration. Following the strategy of \cite{LSP}, we do not a priori choose a polyconvex $f \in C^\infty(\R^{2\times 2})$, but rather we construct it in such a way that $C_f$ already contains this special family of matrices. Once the polyconvex function $f$ has been built, we prove an extension result for $f$ to the Grassmanians, thus obtaining the integrand $\Psi$ of the statement of the Theorem. The extension results are quite simple and might be of independent interest. The construction of our counterexample can not be carried out in the varifold setting. The reason is quite elementary, as the integrand $\Psi$ we would need to construct in the varifold case should be even, convex and positively $1$-homogeneous, hence positive. We refer the reader to Remark \ref{posvarint} for more details. Moreover, let us point out that positivity of the integrand is a necessary assumption when studying existence of minima, but to the best of our knowledge there is no available example for it to be a necessary assumption also when studying regularity properties of stationary points.
\\
\\
The paper is organized as follows. In Section \ref{POSCASE}, we recall the statements of our main results in the case of non-negative integrands $f$ and we collect some crucial preliminary results of \cite{DLDPKT}. The proof of the main results in the positive case, i.e. Proposition \ref{RANK}, Theorem \ref{t:pos} and Corollary \ref{t:main}, will be given in Section \ref{MT}. In Section \ref{SCC}, we provide a counterexample to regularity when dropping the hypothesis of positivity of the integrand. Some lemmas of Section \ref{SCC} concerning the extension of polyconvex functions to the Grassmaniann manifold can be easily extended to general dimension and codimensions. Therefore, we give the proof of these general versions in Section \ref{genext}. Finally, the appendix contains a concise introduction to the tools of geometric measure theory used along the paper.

\subsection*{Aknowledgements}

The authors would like to thank Camillo De Lellis for his interest in the problem and some preliminary discussions. This work was developed while R. T. was finishing his PhD at the University of Z\"urich, and is now supported by the SNF Grant 200021$\_$182565. J. H. was partially supported by the German Science Foundation DFG in the context of the Priority Program SPP 2026 \emph{Geometry at Infinity}.

\section{Positive case: absence of $T_N$ configurations}\label{POSCASE}

In this section we collect some preliminary results proved in \cite{DLDPKT}, that will be essential for the proofs of the next section.

\subsection{Div-curl differential inclusions, wave cones and inclusion sets}\label{DI}

In this subsection, we explain how to rephrase the system \eqref{outer}-\eqref{inner} as a differential inclusion. As recalled in the introduction, the Euler-Lagrange equations defining stationary points for energies $\mathds{E}_f$ are the couple of equations \eqref{outer}, \eqref{inner}, that can be written in the classical form:
\[
\begin{cases}
\dv(Df(Du)) = 0  \\ 
\dv(Du^TDf(Du) - f(Du)\id) = 0  
\end{cases}
\]
Thus we are lead to study the following
%
%
%
{\em div-curl differential inclusion} for a triple of maps $X, Y\in L^\infty (\Omega, \mathbb{R}^{n\times m})$ and $Z\in L^\infty (\Omega, \R^{m\times m})$:
\begin{equation}\label{e:div_curl_free}
{\curl}\, X = 0, \qquad {\dv}\, Y =0, \qquad {\dv}\, Z = 0\, ,
\end{equation}
\begin{equation}\label{e:inclusion}
W \doteq  \left( 
\begin{array}{c}
X\\
Y\\
Z
\end{array}
\right) 
\in K_f =  \left\{A \in \R^{(2n + m)\times m}:
A =
\left(
\begin{array}{c}
X\\
Df(X)\\
X^TDf(X) - f(X)\id
\end{array}
\right)
\right\},
\end{equation}
where $f\in C^1 (\R^{n\times m})$ is a fixed function.
\\
\\
Moreover, we also consider the following more general system of PDEs, for $u \in \Lip(\Omega, \R^n)$ and a Borel map $\beta \in L^\infty(\Omega,(0,+\infty))$:
\begin{equation}\label{vargrb}
\left\{
\begin{array}{ll}
\displaystyle \int_{\Omega}\langle Df(D u),D v\rangle\beta dx = 0 &\forall v\in C^1_c(\Omega,\mathbb{R}^n) \vspace{1mm}\\ 
\displaystyle \int_{\Omega}\langle Df(D u), D u D \phi\rangle \beta dx - \int_{\Omega}f(D u)\dv \phi \beta \;dx = 0\qquad & \forall \phi\in C_c^1(\Omega,\mathbb{R}^m).
\end{array}\right.
\end{equation}
\noindent
This system is equivalent to the stationarity in the sense of varifolds of the varifold $V = \llbracket\Gamma_u,\beta\rrbracket$, where $\Gamma_u$ is the graph of $u$. This is discussed in Subsection \ref{GM}. The div-curl differential inclusion associated to this system is,  again for a triple of maps $X, Y\in L^\infty (\Omega, \mathbb{R}^{n\times m})$ and $Z\in L^\infty (\Omega, \R^{m\times m})$:
\begin{equation}\label{e:div_curl_freeb}
{\curl}\, X = 0, \qquad {\dv}\, Y =0, \qquad {\dv}\, Z = 0\, ,
\end{equation}
\begin{equation}\label{e:inclusionb}
W \doteq  \left( 
\begin{array}{c}
X\\
Y\\
Z
\end{array}
\right) 
\in C_f
\end{equation}
where
\begin{equation}\label{CF}
C_f =
\left\{C\in \R^{(2n + m)\times m}: C =
\left(
\begin{array}{cc}
X\\ 
\beta Df(X)\\
\beta  X^TDf(X) - \beta f(X)\id
\end{array}
\right), \text{ for some $\beta$ > 0}\right\},
\end{equation}
This discussion proves the following

\begin{lemma}\label{equiv}
Let $f\in C^1 (\R^{n\times m})$. A map
$u \in \Lip(\Omega,\mathbb{R}^n)$ is a stationary point of the energy \eqref{ENF} if and only there are matrix fields $Y\in L^\infty (\Omega, \R^{n\times m})$ and $Z\in L^\infty (\Omega, \R^{m\times m})$ such that $W = (Du, Y,Z)$ solves the div-curl differential inclusion \eqref{e:div_curl_free}-\eqref{e:inclusion}.
\\
\indent Moreover, the couple $(u,\beta) \in \Lip(\Omega,\mathbb{R}^n)\times L^\infty(\Omega,(0,+\infty))$ solves \eqref{vargrb} if and only there are matrix fields $Y\in L^\infty (\Omega, \R^{n\times m})$ and $Z\in L^\infty (\Omega, \R^{m\times m})$ such that $W = (Du, Y,Z)$ solves the div-curl differential inclusion \eqref{e:div_curl_freeb}-\eqref{e:inclusionb}.
\end{lemma}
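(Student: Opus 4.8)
The plan is to observe that both equivalences follow by unravelling the weak formulations of a pair of divergence constraints, so that the proof is essentially one integration by parts together with a measurable-selection argument for the weight. First I would recall, as already established in \cite[Prop. 6.8]{DLDPKT} and in the discussion of Subsection \ref{GM}, that a Lipschitz map $u$ is a stationary point of $\E_f$ exactly when the couple \eqref{outer}--\eqref{inner} holds, and that the graph varifold $V = \llbracket\Gamma_u,\beta\rrbracket$ is stationary exactly when \eqref{vargrb} holds; it therefore suffices to match \eqref{outer}--\eqref{inner} with \eqref{e:div_curl_free}--\eqref{e:inclusion}, and \eqref{vargrb} with \eqref{e:div_curl_freeb}--\eqref{e:inclusionb}.

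For the first equivalence, given $u$ solving \eqref{outer}--\eqref{inner} I would set $Y \doteq Df(Du)$ and $Z \doteq Du^T Df(Du) - f(Du)\,\id$; since $Du \in L^\infty$ and $f \in C^1$, both $Y$ and $Z$ are bounded, and $X \doteq Du$ is curl-free as a gradient, so $W = (Du,Y,Z) \in K_f$ a.e. by construction. Now \eqref{outer} is by definition the weak identity $\dv Y = 0$, and to handle \eqref{inner} I would use the elementary identities $\langle Df(Du), Du\,D\phi\rangle = \langle Du^T Df(Du), D\phi\rangle$ and $f(Du)\,\dv\phi = \langle f(Du)\,\id, D\phi\rangle$, which recast \eqref{inner} as $\int_\Omega \langle Z, D\phi\rangle\,dx = 0$ for all $\phi \in C^1_c(\Omega,\R^m)$, i.e. $\dv Z = 0$. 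The converse is the same computation run backwards: if $W = (Du,Y,Z)$ solves \eqref{e:div_curl_free}--\eqref{e:inclusion}, membership in $K_f$ forces $Y = Df(Du)$ and $Z = Du^T Df(Du) - f(Du)\,\id$ pointwise a.e., and the conditions $\dv Y = 0$, $\dv Z = 0$ then read as \eqref{outer} and \eqref{inner}.

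The second equivalence is identical once the weight is carried along. From $(u,\beta)$ solving \eqref{vargrb} I would set $Y \doteq \beta\,Df(Du)$ and $Z \doteq \beta\,(Du^T Df(Du) - f(Du)\,\id)$, which lie in $L^\infty$ since $\beta$ does, and then \eqref{vargrb} reads precisely $\dv Y = 0$, $\dv Z = 0$, with $W = (Du,Y,Z) \in C_f$ a.e. for the given positive $\beta$. Conversely, if $W = (Du,Y,Z)$ solves \eqref{e:div_curl_freeb}--\eqref{e:inclusionb}, then for a.e.\ $x$ there exists $\beta(x) > 0$ realizing $Y(x) = \beta(x)Df(Du(x))$ and $Z(x) = \beta(x)(Du(x)^T Df(Du(x)) - f(Du(x))\,\id)$; off the Borel set where the pair $\big(Df(Du(x)),\,Du(x)^T Df(Du(x)) - f(Du(x))\,\id\big)$ vanishes, $\beta(x)$ is uniquely determined and is a measurable function of $W(x)$ (a ratio of entries), and on that set I would simply put $\beta(x) \doteq 1$; with this choice the inclusion unwinds to \eqref{vargrb}.

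Everything here apart from the extraction of $\beta$ is the transposition identity above together with two integrations by parts and the duality of $\dv$ and $D$. The step that deserves genuine attention is the last one: checking that the recovered $\beta$ is not merely positive and Borel but truly an element of $L^\infty(\Omega,(0,+\infty))$, which is where one has to exploit the boundedness of $W$ and the structure of $C_f$. All the remaining content is elementary linear algebra.
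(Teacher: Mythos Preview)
Your proof is correct and follows exactly the approach of the paper, which in fact offers no separate proof beyond the line ``This discussion proves the following'' immediately preceding the lemma statement. You are more explicit than the paper in spelling out the transposition identities and in flagging the measurable extraction of $\beta$ (and its $L^\infty$ bound) in the converse direction of the second equivalence; this is indeed the only point requiring any care, and the paper simply glosses over it.
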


Finally, we introduce here the wave-cone associated to the mixed \emph{div-curl} operator that is relevant for us.

\begin{definition}\label{d:cone_dc} The cone $\Lambda_{dc}\subset \mathbb R^{(2n+m)\times m}$ consists of the matrices in block form
\[
\left(
\begin{array}{l}
X\\
Y\\
Z
\end{array}\right)
\]
with the property that there is a direction $\xi\in \mathbb S^{m-1}$ and a vector $u\in \mathbb R^n$ such that $X = u\otimes \xi$, $Y \xi =0$ and $Z\xi =0$. 
\end{definition}

\subsection{$T_N$ configurations and $T'_N$ configurations} We start defining $T_N$ configurations for \emph{classical} curl-type differential inclusions.

\begin{definition}\label{definition_TN} An ordered set of $N\geq 2$ matrices $\{X_i\}_{i=1}^N \subset \R^{n\times m}$ of distinct matrices is said to \emph{induce a $T_N$ configuration} if there exist matrices $P, C_i \in\R^{n\times m}$ and real numbers $k_i > 1$ such that:
\begin{itemize}
\item[(a)] Each $C_i$ belongs to the wave cone of ${\curl}\, X=0$, namely $\rank (C_i) \leq 1$ for each $i$;
\item[(b)] $\sum_i C_i = 0$;
\item[(c)] $X_1, \ldots, X_N$, $P$ and $C_1, \ldots, C_N$ satisfy the following $N$ linear conditions 
\begin{equation}\label{INCform}
\begin{split}
&X_1 = P + k_1 C_1 ,\\
&X_2 = P + C_1 + k_2C_2 ,\\
&\dots\\
&\dots\\
&X_N = P + C_1 +\dots + k_NC_N\, .
\end{split}
\end{equation}
\end{itemize}
In the rest of the chapter we will use the word $T_N$ configuration for the data $$P, C_1, \ldots , C_N, k_1, \ldots k_N.$$ We will moreover say that the configuration is {\em nondegenerate} if $\rank (C_i)=1$ for every $i$. 
\end{definition}

As in \cite{DLDPKT}, we give a slightly more general definition of $T_N$ configuration than the one usually given in the literature (cf. \cite{SMVS,LSP,LSR}), in that we drop the requirement that there are no rank-one connections between distinct $X_i$ and $X_j$. We refer the reader to \cite{DLDPKT} for discussions concerning $T_N$ configurations.
\\
\\
Adapted to the div-curl operator we introduce $T'_N$ configurations, originally introduced in \cite{DLDPKT}.

\begin{definition}\label{definition_TN'}
A family $\{A_1, \ldots, A_N\}\subset \R^{(2n+m)\times m}$ of $N\geq 2$ {\em distinct}
\[
A_i\doteq \left(
\begin{array}{c}
X_i\\
Y_i\\
Z_i
\end{array}
\right)
\]
induces a {\em $T_N'$ configuration} if there are matrices $P, Q,  C_i, D_i \in \R^{n\times m}$, $R, E_i\in \R^{m\times m}$ and coefficients $k_i >1$ such that
\begin{equation}\label{formTprime}
\left(
\begin{array}{c}
X_i\\
Y_i\\
Z_i
\end{array}
\right) 
= \left(
\begin{array}{c}
P\\
Q\\
R
\end{array}
\right)
+
 \left(
\begin{array}{c}
C_1\\
D_1\\
E_1
\end{array}
\right)
+ \cdots 
+
\left(
\begin{array}{c}
C_{i-1}\\
D_{i-1}\\
E_{i-1}
\end{array}
\right)
+
k_i
\left(
\begin{array}{c}
C_i\\
D_i\\
E_i
\end{array}
\right)
\end{equation}
and the following properties hold:
\begin{itemize}
\item[(a)] each element $(C_i, D_i, E_i)$ belongs to the wave cone $\Lambda_{dc}$ of \eqref{e:div_curl_free}; 
\item[(b)] $\sum_\ell C_\ell = 0$, $\sum_\ell D_\ell =0 $ and $\sum_\ell E_\ell = 0$.
\end{itemize}
We say that the $T'_N$ configuration is {\em nondegenerate} if $\rank (C_i)=1$ for every $i$.
\end{definition}
\noindent
We collect here some simple consequences of the definition above.

\begin{prop}\label{p:T_N'_easy}
Assume $A_1, \ldots , A_N$ induce a $T_N'$ configuration with $P,Q, R, C_i, D_i, E_i$ and $k_i$ as in Definition \ref{definition_TN'}. Then:
\begin{itemize}
\item[(i)] $\{X_1, \ldots , X_N\}$ induce a $T_N$ configuration of the form \eqref{INCform}, if they are distinct; moreover the $T_N'$ configuration is nondegenerate if and only if the $T_N$ configuration induced by $\{X_1, \ldots , X_N\}$ is nondegenerate;
\item[(ii)] For each $i$ there is an $n_i\in \mathbb S^{m-1}$ and a $u_i\in \R^n$ such that
$C_i = u_i\otimes n_i$, $D_i n_i =0$ and $E_i n_i =0$;
\item[(iii)] $\tr C_i^T D_i = \langle C_i, D_i\rangle = 0$ for every $i$.
\end{itemize}
\end{prop}

\subsection{Strategy}\label{STRA}

Before starting with the proof of the main result of this chapter, it is convenient to explain the strategy we intend to follow. In order to do so, let us consider here the case $n = m = 2$, $N = 5$. Suppose by contradiction that there exists a strictly polyconvex function $f : \R^{2\times 2 }\to \R$, $f(X) = g(X,\det(X))$ and a $T'_5$ configuration $A_1,A_2,A_3,A_4,A_5$,
\[
A_i =
\left(
\begin{array}{c}
X_i\\
Y_i\\
Z_i
\end{array}
\right),\quad \forall i \in \{1,\dots,5\},
\]
where $X_i,Y_i,Z_i$ fulfill the relations of \eqref{formTprime}, i.e. 
\[
\left(
\begin{array}{c}
X_i\\
Y_i\\
Z_i
\end{array}
\right) 
= \left(
\begin{array}{c}
P\\
Q\\
R
\end{array}
\right)
+
 \left(
\begin{array}{c}
C_1\\
D_1\\
E_1
\end{array}
\right)
+ \cdots 
+
\left(
\begin{array}{c}
C_{i-1}\\
D_{i-1}\\
E_{i-1}
\end{array}
\right)
+
k_i
\left(
\begin{array}{c}
C_i\\
D_i\\
E_i
\end{array}
\right).
\]
We will see below that we can without loss of generality assume that $P=0$. The first part of the strategy follows the same lines of the one of \cite{DLDPKT}. Indeed, we think the relations $A_i \in C_f$, $\forall i$, where $C_f$ has been defined in \eqref{CF}, as two separate pieces of information:
\begin{equation}\label{info1}
\left(
\begin{array}{c}
X_i\\
Y_i
\end{array}
\right) \in K'_f = \left\{A \in \R^{4\times 2}: A = \left(
\begin{array}{c}
X\\
\beta Df(X)
\end{array}
\right), \beta >0, X \in \R^{2\times 2}\right\}
\end{equation}
and
\begin{equation}\label{info2}
Z_i = X_i^TY_i - \beta_if(X_i)\id.
\end{equation}
Let us denote with $c_i\doteq f(X_i)$. As in \cite{DLDPKT}, we use \eqref{info1} to obtain inequalities involving $X_i,Y_i$ and quantities involving $f$. These are deduced from the polyconvexity of $f$, analogously to \cite[Lemma 3]{LSP}. In particular, \eqref{info1} is rewritten as
\begin{equation}\label{info3}
c_i - c_j +\frac{1}{\beta_i}\langle Y_i,X_j - X_i\rangle  - d_i\det(X_i - X_j) < 0,
\end{equation}
for $d_i \doteq \partial_{y_5}g(y_1,y_2,y_3,y_4,y_5)|_{(X_i,\det(X_i))}$. This is proved in Proposition \ref{p:convexity}. The final goal is to prove that these inequalities can not be fulfilled at the same time. Then, as in \cite{DLDPKT}, we can simplify \eqref{info3} using the structure result on $T_N$ configurations in $\R^{2\times 2}$ of \cite[Proposition 1]{LSR}. This asserts, in the specific case of the ongoing example, the existence of 5 vectors $(t_1^i,\dots, t_5^i), i \in \{1,\dots, 5\}$ with positive components, such that
\begin{equation}\label{tt}
\sum_{j = 1}^5t_j^i\det(X_j - X_i) = 0,\quad \forall i \in\{1,\dots, 5\}.
\end{equation}
If we use this result in \eqref{info3}, we can eliminate from the expression the variable $d_i$, thus obtaining
\begin{align*}
\nu_i &\doteq \sum_{j = 1}^5t_j^i(c_i - c_j +\frac{1}{\beta_i}\langle Y_i,X_j - X_i\rangle  - d_i\det(X_i - X_j)) \\
&= \sum_{j = 1}^5t_j^i(c_i - c_j +\frac{1}{\beta_i}\langle Y_i,X_j - X_i\rangle) < 0, \quad \forall i \in \{1,\dots,5\},
\end{align*}
compare Corollary \ref{absurd}. In \cite{DLDPKT}, \cite[Proposition 1]{LSR} was extended to $T_N$ configurations in $\R^{n\times m}$, so that relations $\eqref{tt}$ remain true in every dimension and target dimension. This extension is recalled in Proposition \ref{analogm}. Despite being very useful, the last simplification can not conclude the proof. Indeed, up to now we have exploited \eqref{info1} and the fact that $\{X_1,\dots, X_5\}$ induce a $T_5$ configuration, but, if $\beta_i = 1,\forall i$, this is the exact same situation of \cite{LSP}. Since from that paper we know the existence of $T_5$ configurations in $K'_f$, clearly we can not reach a contradiction at this point of the strategy. This is where the inner variations come into play. We rewrite \eqref{info2} using the definition of $T'_5$ configuration and, after some manipulations, we find that the numbers
\[
\mu_i\doteq\sum_{j = 1}^5 t_j^i(\langle X_j - X_i ,Y_i\rangle - \beta_ic_i  + \beta_jc_j)
\]
must all be 0. For the index $I$ such that $\beta_I = \min_{i}\beta_i$, and essentially using the positivity of $c_j$, we find that
\[
0 = \mu_I = \sum_{j = 1}^5 t_j^i(\langle X_j - X_i ,Y_i\rangle - \beta_ic_i  + \beta_jc_j) \le \sum_{j = 1}^5 t_j^i(\langle X_j - X_i ,Y_i\rangle - \beta_ic_i  + \beta_I c_j) = \nu_I,
\]
which is in contradiction with the negativity of $\nu_I$.

\subsection{Preliminary results: $T_N$ configurations}\label{TNALG}

To follow the strategy explained in Subsection \ref{STRA}, we need to recall the extension of \cite[Proposition 1]{LSR} proved in \cite{DLDPKT}. Here we will only recall the essential results without proof, we refer the interested reader to \cite{DLDPKT} for the details. First, it is possible to associate to a set $T_N$-configuration of the form \eqref{INCform}, i.e.
\begin{equation*}
\begin{split}
&X_1 = P + k_1 C_1 ,\\
&X_2 = P + C_1 + k_2C_2 ,\\
&\dots\\
&\dots\\
&X_N = P + C_1 +\dots + k_NC_N\, ,
\end{split}
\end{equation*}
a \emph{defining vector} $(\lambda,\mu) \in \R^{N + 1}$, see \cite[Definition 3.7]{DLDPKT}, defined as follows:
\begin{equation}\label{defvec}
\mu\doteq \frac{k_1\dots k_{N}}{(\mu - 1)(k_1-1)\dots (k_{N} - 1)} \text{ and } \lambda_i \doteq \frac{k_1\dots k_{i - 1}}{(\mu - 1)(k_1-1)\dots (k_{i - 1} - 1)}.
\end{equation}
These relations can be inverted, in fact one can express
\begin{equation}\label{defnk}
k_i = \frac{\mu\lambda_1 + \dots + \mu\lambda_i + \lambda_{i + 1} \dots + \lambda_N}{(\mu - 1)\lambda_i}\, .
\end{equation}
Since $k_i > 1, \forall i \in \{1,\dots,N\}$, \eqref{defvec} imply that $\lambda_i > 0, \forall i, \mu > 1$ and also
\[
\sum_i \lambda_i = 1.
\]
As in \cite[Proposition 1]{LSR}, we define $N$ vectors of $\R^N$ with positive components
\begin{equation}\label{defnt}
t^{i} \doteq  \frac{1}{\xi_i}(\mu\lambda_1,\dots,\mu\lambda_{i - 1},\lambda_{i},\dots,\lambda_N), \text{ for } i \in \{1,\dots,N\},
\end{equation}
where $\xi_i > 1$ are normalization constants chosen in such a way that $\|t^{i}\|_{1} = 1$. For a vector $v = (v_1,\dots, v_N) \in \R^N$,
\[
\|v\|_1 = \sum_{j = 1}^N |v_j|.
\]
The importance of these vectors $t^i$ comes from of \cite[Proposition 1]{LSR}, where it is proved that, for a $T_N$ configuration of the form \eqref{INCform} in $\R^{2\times 2}$,
\begin{equation}\label{sum}
\sum_{j = 1}^Nt_j^iX_j = P + C_1 + \dots + C_{i - 1}
\end{equation}
Moreover, the following relation holds for every $i$:
\begin{equation}\label{sumdet1}
\det\left(\sum_{j = 1}^Nt_j^iX_j\right) = \sum_{j=1}^Nt_j^i\det(X_j)\, .
\end{equation}
We need to state the generalization of the previous relations for $T_N$ configurations of any size. In \cite[Lemma 3.10]{DLDPKT} it was proved this general Linear Algebra result:
\begin{lemma}\label{l:linear}
Assume the real numbers $\mu>1$, $\lambda_1, \ldots , \lambda_N >0$ and $k_1, \ldots , k_N >1$ are linked by the formulas \eqref{defvec}. Assume $v, v_1, \ldots, v_N, w_1, \ldots , w_N$ are elements of a vector space satisfying the  relations
\begin{align}
w_i &= v + v_1 + \ldots + v_{i-1} + k_i v_i\\
0 &= v_1+ \ldots + v_N\, .
\end{align}
If we define the vectors $t^i$ as in \eqref{defnt}, then
\begin{equation}\label{e:sumfinale}
\sum_j t^i_j w_j = v + v_1 + \ldots + v_{i-1}\, .
\end{equation}
\end{lemma}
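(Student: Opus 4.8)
The statement is purely a matter of linear algebra, and the plan is to prove it by a direct computation whose only ingredients are the normalization $\sum_i\lambda_i=1$, the positivity of the $\lambda_i$ and of $\mu-1$, the inversion formula \eqref{defnk}, and the hypothesis $v_1+\dots+v_N=0$. Throughout I would abbreviate $S_\ell\doteq\lambda_1+\dots+\lambda_\ell$, so $S_0=0$ and $S_N=1$. First I would make the normalizing constants $\xi_i$ appearing in \eqref{defnt} explicit: since every entry of $(\mu\lambda_1,\dots,\mu\lambda_{i-1},\lambda_i,\dots,\lambda_N)$ is strictly positive, the requirement $\|t^i\|_1=1$ forces
\[
\xi_i=\mu\sum_{j<i}\lambda_j+\sum_{j\ge i}\lambda_j=\mu S_{i-1}+(1-S_{i-1})=1+(\mu-1)S_{i-1},
\]
so in particular $\xi_i\ge 1$. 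The same bookkeeping rewrites \eqref{defnk} as the identity
\[
\lambda_ik_i=\frac{1+(\mu-1)S_i}{\mu-1}=\frac{1}{\mu-1}+S_i,
\]
which is the relation that will drive the cancellations below.

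Next I would introduce the two weighted sums $W\doteq\sum_{j=1}^N\lambda_jw_j$ and $W_{<i}\doteq\sum_{j<i}\lambda_jw_j$ and compute them by substituting $w_j=v+v_1+\dots+v_{j-1}+k_jv_j$, using $\sum_j\lambda_j=1$, exchanging the order of summation in the double sum $\sum_j\lambda_j\sum_{\ell<j}v_\ell=\sum_\ell v_\ell\sum_{j>\ell}\lambda_j$, and inserting the identity for $\lambda_\ell k_\ell$ from the first step. In $W$ the coefficient of each $v_\ell$ comes out to be $(1-S_\ell)+\big(\tfrac{1}{\mu-1}+S_\ell\big)=\tfrac{\mu}{\mu-1}$, independently of $\ell$, so that $W=v+\tfrac{\mu}{\mu-1}\sum_\ell v_\ell=v$ by the hypothesis $\sum_\ell v_\ell=0$. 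Performing the same computation with the sum truncated at $j=i-1$, for every $\ell\le i-1$ the coefficient of $v_\ell$ is $(S_{i-1}-S_\ell)+\big(\tfrac{1}{\mu-1}+S_\ell\big)=S_{i-1}+\tfrac{1}{\mu-1}=\tfrac{\xi_i}{\mu-1}$, again independent of $\ell$; hence $W_{<i}=S_{i-1}\,v+\tfrac{\xi_i}{\mu-1}(v_1+\dots+v_{i-1})$.

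Finally I would assemble the identity. By \eqref{defnt} one has $t^i_j=\xi_i^{-1}\mu\lambda_j$ for $j<i$ and $t^i_j=\xi_i^{-1}\lambda_j$ for $j\ge i$, so that $\xi_i\sum_j t^i_jw_j=\mu W_{<i}+(W-W_{<i})=W+(\mu-1)W_{<i}$. Substituting $W=v$ and the formula for $W_{<i}$ and using $\xi_i=1+(\mu-1)S_{i-1}$, this equals $\big(1+(\mu-1)S_{i-1}\big)v+\xi_i(v_1+\dots+v_{i-1})=\xi_i\big(v+v_1+\dots+v_{i-1}\big)$, and dividing by $\xi_i>0$ yields exactly \eqref{e:sumfinale}.

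There is no genuine conceptual obstacle in this lemma; the only thing requiring care is the double-sum reindexing of the second step and keeping the two families of partial sums $S_\ell$ and $\xi_i$ distinct. What makes the argument work is the uniformity in $\ell$ of the coefficient of $v_\ell$ obtained after inserting $\lambda_\ell k_\ell=\tfrac{1}{\mu-1}+S_\ell$: it is precisely this independence of $\ell$, together with $v_1+\dots+v_N=0$, that collapses $W$ to $v$ and forces the clean telescoped form of $W_{<i}$, from which the conclusion follows immediately.
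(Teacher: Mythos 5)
Your proof is correct: the explicit formula $\xi_i = 1 + (\mu-1)S_{i-1}$, the identity $\lambda_i k_i = \tfrac{1}{\mu-1} + S_i$ derived from \eqref{defnk}, and the reindexed double sums all check out, and they combine exactly as you claim. This is the same direct-computation route that the paper (and the cited \cite[Lemma 3.10]{DLDPKT}) relies on; indeed the key identity $k_r\lambda_r + \dots + \lambda_{i-1} = \xi_i/(\mu-1)$ that drives your calculation of $W_{<i}$ is precisely the one reproduced in the proof of Lemma~\ref{TEC} in this paper.
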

This lemma allows to generalize \eqref{sum} and \eqref{sumdet1}, compare \cite[Proposition 3.8]{DLDPKT}. To state this result, we need some notation concerning multi- indexes. We will use $I$ for multi-indexes referring to ordered sets of rows of matrices and $J$ for multi-indexes referring to ordered sets of columns. In our specific case, where we deal with matrices in $\R^{n\times m}$ we will thus have
\begin{align*}
I &= (i_1,\dots,i_r),\qquad 1\le i_1<\dots< i_r \le n\, ,\\
 \text{ and } \qquad J &= (j_1,\dots,j_s),\qquad 1\le j_1< \dots< j_s\le m\, 
\end{align*}
and we will use the notation $|I|\doteq  r$ and $|J|\doteq s$. In the sequel we will always have $r = s$. 

\begin{definition}\label{multiind}
We denote by $\mathcal{A}_r$ the set
\[
\mathcal{A}_r = \{(I,J): |I| = |J| = r\},\qquad  1\le r \le \min(n,m) .
\]
For a matrix $M = (m_{ij})\in\R^{n\times m}$ and for $Z\in \mathcal{A}_r$ of the form $Z = (I,J)$, we denote by $M^Z$ the squared $r\times r$ matrix obtained by $M$ considering just the elements $m_{ij}$ with $i\in I$, $j\in J$ (using the order induced by $I$ and $J$). 
\end{definition}

We are finally in position to state \cite[Proposition 3.8]{DLDPKT}.

\begin{prop}\label{analogm}
Let $\{X_1, \ldots, X_N\}\subset \R^{n\times m}$ induce a $T_N$ configuration as in \eqref{INCform} with defining vector 
$(\lambda, \mu)$. Define the vectors $t^1,\dots,t^N$ as in \eqref{defnt} and for every $Z\in \mathcal{A}_r$ of order $1\le r \leq \min \{n,m\}$ define the 
minor $\mathcal{S} : \R^{n\times m} \ni X \mapsto \mathcal{S} (X) \doteq  \det (X^Z)\in \R$. Then 
\begin{equation}\label{e:sum_minor}
\sum_{j = 1}^Nt_j^i \mathcal{S}(X_j) = \mathcal{S}\left(\sum_{j = 1}^Nt_j^iX_j\right) = \mathcal{S}(P + C_1 + \dots + C_{i - 1})\, .
\end{equation}
and $A^\mu_Z \lambda = 0$.
\end{prop}

It is clear that the previous result extends \eqref{sum} and \eqref{sumdet1} to all the minors.

\subsection{Preliminary results: inclusion set associated to polyconvex functions}\label{POLYFUN}

As in \cite[Section 4]{DLDPKT}, we write a necessary condition for a set of distinct matrices $A_i \in \R^{2n\times m}$
\begin{equation}\label{inclusion}
A_i\doteq \left(
\begin{array}{c}
X_i\\
Y_i
\end{array}
\right)\, ,
\end{equation}
to belong to a set of the form
\begin{equation}\label{e:inclusion2}
K'_f \doteq \left\{\left(
\begin{array}{c}
X\\
Df(X)\\
\end{array}
\right):X \in \R^{n\times m}\right\}
\end{equation}
for some strictly polyconvex function $f:\mathbb{R}^{n\times m}\to \mathbb{R}$. First, introduce the following notation. This is the same as in \cite{DLDPKT}. Let $f:\R^{n\times m}\to \R$ be a strictly polyconvex function of the form $f(X) =g(\Phi(X))$, where $g \in C^1(\R^k)$ is strictly convex and $\Phi$ is the vector of all the subdeterminants of $X$, i.e.
\[
\Phi(X) = (X,v_1(X),\dots,v_{\min(n,m)}(X)),
\]
and $$v_s(X) = (\det(X_{Z_1}),\dots, \det(X_{Z_{\#\mathcal{A}_s}}))$$ for some fixed (but arbitrary) ordering of all the elements $Z\in\mathcal{A}_s$. Variables of $\R^k$, and hence partial derivatives in $\R^k$, are labeled using the ordering induced by $\Phi$. The first $nm$ partial derivatives, corresponding in $\Phi(X)$ to $X$, are collected in a $n\times m$ matrix denoted with $D_Xg$. The $j$-th partial derivative, $mn + 1\le j \le k$, is instead denoted by $\partial_Zg$, where $Z$ is the element of $\mathcal{A}_s$ corresponding to the $j$-th position of $\Phi$. Let us make an example in low dimension: if $n = 3,m = 2$, then $k = 9$, and we choose the ordering of $\Phi$ to be
\[
\Phi(X) = (X,\det(X_{(12,12)}),\det(X_{(13,12)}),\det(X_{(23,12)})).
\]
In this case, $y \in \R^k$ has coordinates $$y = (y_{11},y_{12},y_{21},y_{22},y_{31},y_{32},y_{(12,12)}, y_{(13,12)},y_{(23,12)}).$$ The partial derivatives with respect to the first $6$ variables are collected in the $3\times 2$ matrix:
\[
D_Xg = \left(\begin{array}{cc}
\partial_{11}g & \partial_{12}g\\
\partial_{21}g& \partial_{22}g \\
\partial_{31}g& \partial_{32}g \\
\end{array}
\right)
\]
The partial derivatives with respect to the remaining variables are denoted as $\partial_{(12,12)}g$, $\partial_{(13,12)}g$ and $\partial_{(23,12)}g$, i.e. following the ordering induced by $\Phi$. Finally, for a matrix $A \in \R^{r\times r}$, we denote with $\cof(A)$ the matrix defined as $$\cof(A)_{ij} = (-1)^{i + j}\det(M_{ji}(A)),$$ where $M_{ji}(A)$ denotes the $(n-1)\times(n-1)$ submatrix of $A$ obtained by eliminating from $A$ the $j$-th row and the $i$-th column. In particular, the following relation holds
\[
\cof(A)A = A\cof(A) = \det(A)\id_r.
\] 
We are ready to state the following:

\begin{prop}\label{p:convexity}
Let $f:\R^{n\times m}\to \R$ be a strictly polyconvex function of the form $f(X) =g(\Phi(X))$, where
$g \in C^1$ is strictly convex and $\Phi$ is the vector of all the subdeterminants of $X$, i.e.
\[
\Phi(X) = (X,v_1(X),\dots,v_{\min(n,m)}(X)),
\]
and $$v_s(X) = (\det(X_{Z_1}),\dots, \det(X_{Z_{\#\mathcal{A}_s}}))$$ for some fixed (but arbitrary) ordering of all the elements $Z\in\mathcal{A}_s$. If $A_i \in K'_f$ and $A_i \neq A_j$ for $i \neq j$, then $X_i$, $Y_i = D f(X_i)$ 
and $c_i = f (X_i)$ fulfill the following inequalities for every $i\neq j$:
\begin{multline}\label{finalemdim}
c_i - c_j +\langle Y_i,X_j - X_i\rangle - \sum_{r = 2}^{\min(m,n)}\sum_{Z\in\mathcal{A}_r}d^i_{Z}\left(\langle{\cof}(X_i^Z)^T,X^Z_j - X^Z_i\rangle -\det(X_j^Z) + \det(X_i^Z)\right)<0, 
\end{multline}
where $d^i_Z = \partial_Zg(\Phi(X_i))$. 
\end{prop}

This result was proved in \cite[Proposition 4.1]{DLDPKT}. We now introduce the set

\[
C'_f \doteq \left\{C'\in \R^{2n\times m}: C' =
\left(
\begin{array}{cc}
X\\ 
\beta Df(X)\\
\end{array}
\right), \text{ for some $\beta$ > 0}\right\}.
\]

Notice that $C'_f$ is the projection of $C_f$ on the first $2n\times m$ coordinates. We immediately obtain from the previous proposition and the definition of $C'_f$ that
\[
A_i \in C'_f, \quad \forall i\in \{1,\dots,N\}
\]
if and only if there exist numbers $\beta_i > 0, \forall i$, such that
\begin{multline}\label{finalemdimb}
c_i - c_j +\frac{1}{\beta_i}\langle Y_i,X_j - X_i\rangle -\sum_{r = 2}^{\min(m,n)}\sum_{Z\in\mathcal{A}_r}d^i_{Z}\left(\langle{\cof}(X_i^Z)^T,X^Z_j - X^Z_i\rangle -\det(X_j^Z) + \det(X_i^Z)\right)<0.
\end{multline}

The expressions in \eqref{finalemdimb} can be simplified when the matrices $X_1, \ldots , X_N$ induce a $T_N$ configuration:

\begin{corollary}\label{absurd}
Let $f$ be a strictly polyconvex function and let $A_1, \ldots , A_N$ be distinct elements of $K'_f$ with the additional property that $\{X_1, \ldots , X_N\}$ induces a $T_N$ configuration of the form \eqref{INCform} with defining vector $(\mu, \lambda)$. Then,
\begin{equation}\label{e:absurdb}
c_i - \sum_{j}t_j^ic_j -\frac{k_i}{\beta_i}\langle Y_i,C_i\rangle< 0 ,\quad \forall i \in \{1,\dots, N\},
\end{equation}
where the $t^i$'s are given by \eqref{defnt}. 
\end{corollary}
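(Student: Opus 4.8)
I would prove Corollary \ref{absurd} by specializing the general inequality \eqref{finalemdimb} along the vectors $t^i$ coming from the $T_N$-structure, exactly as in the strategy of Subsection \ref{STRA}. Fix $i \in \{1,\dots,N\}$ and take the convex combination of \eqref{finalemdimb} over $j$ with weights $t^i_j > 0$. Since $\sum_j t^i_j = 1$, the linear combination of the left-hand side is
\[
c_i - \sum_j t^i_j c_j + \frac{1}{\beta_i}\Big\langle Y_i, \sum_j t^i_j X_j - X_i\Big\rangle - \sum_{r=2}^{\min(m,n)}\sum_{Z\in\mathcal A_r} d^i_Z\Big(\big\langle \cof(X_i^Z)^T, \textstyle\sum_j t^i_j X_j^Z - X_i^Z\big\rangle - \sum_j t^i_j\det(X_j^Z) + \det(X_i^Z)\Big),
\]
and it is $< 0$ because each summand with weight $t^i_j>0$ is strictly negative (and $N\ge 2$ guarantees at least one genuine term $j\ne i$; the $j=i$ term vanishes). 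So the whole task reduces to simplifying the two bracketed groups using Proposition \ref{analogm}.

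**Key computations.** First, apply \eqref{e:sum_minor} of Proposition \ref{analogm} with the identity minor (i.e. $\mathcal S = \mathrm{id}$ on the first block, $r=1$ but really just the linear statement \eqref{sum}/Lemma \ref{l:linear}): $\sum_j t^i_j X_j = P + C_1 + \dots + C_{i-1}$. Combining this with the $T_N$ relation $X_i = P + C_1 + \dots + C_{i-1} + k_i C_i$ gives
\[
\sum_j t^i_j X_j - X_i = -k_i C_i,
\]
so the outer-variation term becomes $\frac1{\beta_i}\langle Y_i, -k_i C_i\rangle = -\frac{k_i}{\beta_i}\langle Y_i, C_i\rangle$. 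Second, for each fixed $Z \in \mathcal A_r$ with $r\ge 2$, apply \eqref{e:sum_minor} to the minor $\mathcal S(X) = \det(X^Z)$: this gives simultaneously $\sum_j t^i_j \det(X_j^Z) = \det\big((\sum_j t^i_j X_j)^Z\big) = \det\big((P+C_1+\dots+C_{i-1})^Z\big)$, and the same quantity equals $\det(X_i^Z) - [\det(X_i^Z) - \det((\dots)^Z)]$. More to the point, the bracket
\[
\big\langle \cof(X_i^Z)^T, \textstyle\sum_j t^i_j X_j^Z - X_i^Z\big\rangle - \sum_j t^i_j\det(X_j^Z) + \det(X_i^Z)
\]
should be shown to vanish identically: $\sum_j t^i_j X_j^Z - X_i^Z = (P+C_1+\dots+C_{i-1})^Z - X_i^Z = (X_i - k_iC_i)^Z - X_i^Z$, and the remaining term $-\sum_j t^i_j \det(X_j^Z) + \det(X_i^Z) = \det(X_i^Z) - \det((P+C_1+\dots+C_{i-1})^Z)$. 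Then the identity to verify is purely algebraic: for square matrices $M$ (here $X_i^Z$) and $M'$ (here $(P+C_1+\dots+C_{i-1})^Z$) with $M' = \sum_j t^i_j X_j^Z$, one has $\langle \cof(M)^T, M' - M\rangle = \det(M') - \det(M)$; this is exactly the statement that $\mathcal S$, being an $r\times r$ determinant, is affine along the combination — which is precisely the content of \eqref{e:sum_minor} reorganized (the determinant of the convex combination equals the convex combination of determinants, and simultaneously $\langle \cof(M)^T, \cdot\rangle$ is its derivative, so the "second-order remainder" is forced to be zero). Hence every term in the double sum over $Z$ drops out, and what survives is exactly \eqref{e:absurdb}.

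**Main obstacle.** The only real point requiring care is the bookkeeping that makes the $d^i_Z$-terms disappear cleanly, i.e. verifying that for each individual minor $\mathcal S$ one has $\mathcal S\big(\sum_j t^i_j X_j\big) = \sum_j t^i_j \mathcal S(X_j)$ AND that $\sum_j t^i_j X_j$ sits exactly at $P + C_1 + \dots + C_{i-1}$, so that the "Taylor remainder" $\langle \cof(X_i^Z)^T, (\cdot)^Z - X_i^Z\rangle - \det((\cdot)^Z) + \det(X_i^Z)$ of the minor at the point $X_i$, evaluated at $\sum_j t^i_j X_j$, vanishes. Both facts are given to us verbatim by Proposition \ref{analogm} applied minor-by-minor, so there is no hard analysis — one just has to be scrupulous that the $t^i$ are the same weights across all the minors (they are, by construction \eqref{defnt}) and that $P$ may be taken $0$ without loss of generality, which only shifts all $X_j$ by a constant and changes none of the differences $X_j - X_i$, $C_i$, or minors-differences appearing above. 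After that, assembling the pieces and noting $\sum_j t^i_j = \|t^i\|_1 = 1$ yields \eqref{e:absurdb} directly.
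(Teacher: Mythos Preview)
Your overall approach is correct and matches the strategy outlined in Subsection~\ref{STRA} (the paper itself defers the proof to \cite[Corollary 4.3]{DLDPKT}). The reduction $\sum_j t^i_j X_j - X_i = -k_iC_i$ via Lemma~\ref{l:linear} is exactly right, and the handling of the strict inequality (the $j=i$ term vanishes, at least one $j\neq i$ term is strictly negative with positive weight) is fine.

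However, your justification for the vanishing of the $d^i_Z$-terms has a genuine gap. You assert that $\langle \cof(M)^T, M' - M\rangle = \det(M') - \det(M)$ for $M = X_i^Z$ and $M' = (\sum_j t^i_j X_j)^Z$, and try to derive this from \eqref{e:sum_minor} by saying ``the determinant is affine along the combination''. But \eqref{e:sum_minor} is a single scalar identity relating $N$ points to their $t^i$-barycenter; it does \emph{not} by itself imply that $\det$ is affine on the segment from $M$ to $M'$. The correct (and short) argument uses a fact you never invoke: by the definition of a $T_N$ configuration, $\rank(C_i)\le 1$, hence $\rank(C_i^Z)\le 1$ for every $Z\in\mathcal A_r$. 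Since $M - M' = k_i C_i^Z$, the Matrix Determinant Lemma (Lemma~\ref{MDL}) gives
\[
\det(M') = \det\bigl(X_i^Z - k_i C_i^Z\bigr) = \det(X_i^Z) + \bigl\langle \cof(X_i^Z)^T,\, -k_i C_i^Z\bigr\rangle = \det(X_i^Z) + \bigl\langle \cof(X_i^Z)^T,\, M' - M\bigr\rangle,
\]
which is exactly the identity you need. Equivalently: $\det$ is affine along rank-one directions, and here the direction $M'-M$ \emph{is} rank-one. Once this is inserted, your computation goes through verbatim and yields \eqref{e:absurdb}. (A minor remark: the observation that ``$P$ may be taken $0$'' is unnecessary for this corollary, since every quantity in the argument involves only differences $X_j - X_i$ and $C_i$.)
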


This corresponds to \cite[Corollary 4.3]{DLDPKT}, and concludes the list of preliminary results needed for the results of this paper.

\section{Positive case: proof of the main results}\label{MT}

Before checking whether the inclusion set $C_f$ contains $T_N$ or $T'_N$ configurations, we need to exclude more basic building block for wild solutions, such as rank-one connections or, as in this case, $\Lambda_{dc}$-connections in $C_f$. It is rather easy to see, compare for instance \cite{LSP}, that if $f$ is strictly polyconvex, then for $A,B \in K_f$ it is not possible to have
\[
A-B \in \Lambda_{dc}.
\]
Indeed the same result holds even considering $K'_f$. To prove this, it is sufficient to observe that if $X,Y \in \R^{n\times m}$ are rank-one connected, i.e. for some $u \in \mathbb{S}^{m - 1}$
\begin{equation}\label{r1}
(X - Y)v = 0,\; \forall v\perp u, 
\end{equation}
and
\begin{equation}\label{rn}
(Df(X) - Df(Y))u = 0,
\end{equation}
then
\begin{align*}
\langle Df(X) - Df(Y), X - Y \rangle &= \sum_{i = 1}^m ((Df(X) - Df(Y))u_i, (X-Y) u_i)\\
&\overset{\eqref{r1}}{=} ((Df(X) - Df(Y))u, (X-Y) u) \overset{\eqref{rn}}{=} 0,
\end{align*}
where $\{u_1,\dots, u_m\}$ is an orthonormal basis of $\R^m$ with $u_1 = u$. On the other hand, since $f$ is strictly polyconvex, it is easy to see that
\[
\langle Df(X) - Df(Y), X - Y \rangle > 0
\]
if $\rank(X-Y) = 1$. The first result of this section shows that this result holds also for $C_f$, provided $f$ is positive.

\begin{prop}\label{RANK}
Let $f$ be strictly polyconvex. If
\[
A =\left(
\begin{array}{cc}
X\\ 
Y\\
Z
\end{array}
\right),\; B= 
\left(
\begin{array}{cc}
X' \\ 
Y' \\
Z' 
\end{array}
\right)\in C_f,
\]
and $f(X) \ge 0, f(X') \ge 0$, then
\[
A- B \notin \Lambda_{dc}.
\]
\end{prop}
\begin{proof}
Suppose by contradiction that there exist
\[
A=
\left(
\begin{array}{cc}
X\\ 
Y\\
Z
\end{array}
\right) \in C_f,\;B=
\left(
\begin{array}{cc}
X' \\ 
Y' \\
Z' 
\end{array}
\right)
= \left(
\begin{array}{cc}
X + C\\ 
Y + D\\
Z + E
\end{array}
\right) \in C_f,
\]
with $c\doteq f(X) \ge 0, c'\doteq f(X') \ge 0$, and there is a vector $\xi \in \R^m$ with $\|\xi\| = 1$ such that for every $v\perp \xi$, $$Cv = 0,\;D\xi =0,\; E\xi = 0.$$ Now we can use the so-called Matrix Determinant Lemma \ref{MDL} to see that the expressions found in \eqref{finalemdimb} evaluated at $$A_1 = \left(\begin{array}{c}X\\Y\end{array}\right), \; A_2 = \left(\begin{array}{c}X + C\\Y + D\end{array}\right),$$ yield the following inequalities:
\begin{align}
&c - c' - \frac{1}{\beta}\langle X- X', Y\rangle< 0,\label{cc'} \\
&c' - c - \frac{1}{\beta'}\langle X' - X, Y'\rangle< 0.\label{c'c}
\end{align}
Moreover by assumption $(Z' - Z)\xi = 0$, i.e.
\[
(Z' - Z)\xi = 0 = (X')^TY'\xi - X^TY\xi -( c'\beta' - c\beta)\xi. 
\]
Thus, using $(Y' - Y)\xi = 0$,
\[
0 = (X' - X)^TY'\xi-( c'\beta' - c\beta)\xi = \langle C,Y\rangle\xi -( c'\beta' - c\beta)\xi,
\]
that yields, since $\|\xi\| = 1$,
\begin{equation}\label{CC}
\langle C,Y\rangle = c'\beta' - c\beta.
\end{equation}
In the previous lines we have used the fact that
\[
(X' - X)^TY'\xi = C^T(Y + D)\xi = C^TY\xi,
\]
and, since $C$ is of rank one with $Cv = 0, \forall v\perp \xi$,
\[
C^TY\xi = \langle C, Y\rangle\xi.
\]
Exploiting \eqref{CC}, we rewrite \eqref{cc'} as
\begin{equation}\label{one}
c - c' - \frac{1}{\beta}\langle X- X', Y\rangle = c - c' + \frac{1}{\beta}\langle C, Y\rangle = c - c' + \frac{1}{\beta} ( c'\beta' - c\beta)  < 0,
\end{equation}
and \eqref{c'c} as
\begin{equation}\label{two}
c' - c - \frac{1}{\beta'}\langle C, Y\rangle = c' - c - \frac{1}{\beta'}( c'\beta' - c\beta) < 0
\end{equation}
From \eqref{one}, we infer
\[
\beta c - \beta c' + (c'\beta' - c\beta) < 0 \Leftrightarrow  c'(\beta' - \beta) < 0
\]
and from \eqref{two}
\[
\beta' c' - \beta' c - (c'\beta' - c\beta) < 0 \Leftrightarrow c(\beta - \beta')<0.
\]
Since $c \ge 0$ and $c' \ge 0$, we get a contradiction.
\end{proof}

Let us recall the Matrix Determinant Lemma used in the proof of the last proposition:

\begin{lemma}\label{MDL}
Let $A,B$ be matrices in $\R^{m\times m}$, and let $\rank(B) \le 1$. Then,
\[
\det(A + B) = \det(A) + \langle\cof(A)^T,B\rangle.
\]
\end{lemma}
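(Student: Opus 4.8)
The plan is to prove the identity by exploiting the multilinearity and the alternating property of the determinant viewed as a function of the columns of a matrix. First I would use the rank hypothesis to write $B = u\otimes v$ for suitable vectors $u,v\in\R^m$, with the convention $(u\otimes v)_{ij}=u_iv_j$; if $B=0$ the statement is trivial (and $\langle\cof(A)^T,0\rangle=0$), so assume $B\neq0$. Denoting by $a_1,\dots,a_m$ the columns of $A$, the columns of $A+B$ are $a_j+v_ju$, so multilinearity in the columns expands $\det(A+B)$ into $2^m$ terms according to whether one selects $a_j$ or $v_ju$ in the $j$-th slot.

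Next I would observe that any term in which $u$ is selected in at least two columns vanishes, since the determinant is alternating and such a term has two proportional columns. Hence only the term with all the $a_j$'s, which is $\det(A)$, and the $m$ terms with exactly one factor $u$ survive, giving
\[
\det(A+B)=\det(A)+\sum_{j=1}^m v_j\,\det(a_1,\dots,a_{j-1},u,a_{j+1},\dots,a_m).
\]
Then I would expand each of these determinants along its $j$-th column (Laplace expansion), obtaining $\det(a_1,\dots,u,\dots,a_m)=\sum_{i=1}^m(-1)^{i+j}u_i\det(M_{ij}(A))$, where $M_{ij}(A)$ is the submatrix obtained from $A$ by deleting the $i$-th row and the $j$-th column.

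Finally I would match this with the cofactor matrix. With the paper's convention $\cof(A)_{ij}=(-1)^{i+j}\det(M_{ji}(A))$ one has $(-1)^{i+j}\det(M_{ij}(A))=\cof(A)_{ji}$, so the sum becomes $\sum_{i,j}\cof(A)_{ji}u_iv_j=\sum_{i,j}\cof(A)_{ji}B_{ij}=\tr(\cof(A)B)=\langle\cof(A)^T,B\rangle$, using $\langle M,N\rangle=\tr(M^TN)$ in the last equality. This closes the argument. The only point requiring care is keeping the row/column indices of the cofactor consistent with the paper's definition; there is no genuine analytic obstacle. (Alternatively one could first treat invertible $A$ via $\det(A+B)=\det(A)\det(\id+A^{-1}B)=\det(A)(1+\tr(A^{-1}B))$, since $A^{-1}B$ has rank $\le1$, together with $\cof(A)=\det(A)A^{-1}$, and then extend to all $A$ by observing that both sides are polynomial in the entries of $A$ and of $u,v$ and agree on the dense set of invertible $A$; the multilinearity argument avoids this density step.)
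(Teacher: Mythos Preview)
Your proof is correct. The paper does not actually prove this lemma; it is merely recalled as a well-known identity (``Let us recall the Matrix Determinant Lemma\ldots'') and stated without proof. Your argument via multilinearity of the determinant in the columns, together with the alternating property to kill all terms with $u$ appearing twice, is a standard and complete proof, and you have carefully matched the index conventions for $\cof(A)$ used in the paper. The alternative route you sketch (density of invertible matrices plus $\cof(A)=\det(A)A^{-1}$) is also valid; both are more than what the paper itself provides.
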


Now that we have excluded $\Lambda_{dc}$-connections, we can ask ourselves the same question concerning $T'_N$ configurations. In particular we want to prove the main Theorem of this part of the paper:

\begin{theorem}\label{t:pos}
If $f\in C^1 (\R^{n\times m})$ is a strictly polyconvex function, then $C_f$ does not contain any set $\{A_1, \ldots , A_N\} \subset \R^{(2n + m)\times m}$ which induces a $T_N'$ configuration, provided that $f(X_1) \ge 0,\dots, f(X_N) \ge 0$, if
\[
A_i = \left(
\begin{array}{c}
X_i\\
Y_i\\
Z_i
\end{array}
\right), \quad X_i,Y_i \in \R^{n\times m}, Z_i \in \R^{m\times m}, \forall i \in \{1,\dots, N\}.
\]
\end{theorem}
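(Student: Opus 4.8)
The plan is to carry out, in full generality, the strategy sketched in Subsection~\ref{STRA}. Suppose for contradiction that $\{A_1,\dots,A_N\}\subset C_f$ induces a $T_N'$ configuration with data $P,Q,R,C_i,D_i,E_i,k_i$ as in Definition~\ref{definition_TN'} and $f(X_i)\ge 0$ for all $i$. Write $A_i\in C_f$ with multiplier $\beta_i>0$, so that $Y_i=\beta_i Df(X_i)$ and, crucially for what follows, $Z_i=X_i^TY_i-\beta_i c_i\,\id$, where $c_i:=f(X_i)\ge 0$. Two preliminary reductions: composing $f$ with $X\mapsto X+P$ (which preserves strict polyconvexity, since every minor of $X+P$ is an affine function of the minors of $X$) we may assume $P=0$; and, disposing first of the case in which some $X_i=X_j$, as in~\cite{DLDPKT}, we may assume the $X_i$ pairwise distinct. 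By Proposition~\ref{p:T_N'_easy}(i) they then induce a $T_N$ configuration of the form~\eqref{INCform}; let $(\lambda,\mu)$ be its defining vector and $t^1,\dots,t^N$ the vectors~\eqref{defnt}, which have strictly positive entries.

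The \emph{outer} (first block) half is exactly as in~\cite{DLDPKT}. Since $(X_i,Y_i)\in C'_f$, Proposition~\ref{p:convexity} yields the inequalities~\eqref{finalemdimb}; contracting the $i$-th family against the weights $t^i_j$ and using Proposition~\ref{analogm} — which, applied to each minor, makes every term $d^i_Z$ drop out — one arrives at Corollary~\ref{absurd}: using $\sum_j t^i_j(X_j-X_i)=-k_iC_i$,
\[
\nu_i:=c_i-\sum_j t^i_j c_j-\frac{k_i}{\beta_i}\langle Y_i,C_i\rangle<0\qquad\text{for every }i .
\]

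The core of the proof — and the point going beyond~\cite{DLDPKT} — is to extract from the \emph{inner} (third block) relation $Z_i=X_i^TY_i-\beta_i c_i\,\id$ a complementary family of \emph{equalities}. Applying Lemma~\ref{l:linear} to $Z_i=R+E_1+\dots+E_{i-1}+k_iE_i$ and $\sum_\ell E_\ell=0$ gives $\sum_j t^i_jZ_j=Z_i-k_iE_i$, hence $k_iE_i=Z_i-\sum_j t^i_jZ_j$. Substituting $Z_\ell=X_\ell^TY_\ell-\beta_\ell c_\ell\,\id$, expanding $X_\ell,Y_\ell$ via their $T_N$ decompositions, and using the wave-cone relations of Proposition~\ref{p:T_N'_easy}(ii)--(iii) ($C_i=u_i\otimes n_i$, $D_in_i=0$, $E_in_i=0$, $\langle C_i,D_i\rangle=0$) together with $\sum_j t^i_jY_j=Y_i-k_iD_i$, one should be able to reduce the algebra to the identities
\[
\mu_i:=\sum_j t^i_j\bigl(\langle X_i-X_j,Y_i\rangle-\beta_i c_i+\beta_j c_j\bigr)=0\qquad\text{for every }i ,
\]
that is $k_i\langle Y_i,C_i\rangle=\beta_i c_i-\sum_j t^i_j\beta_j c_j$. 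Inserting this into the definition of $\nu_i$ gives the clean formula $\nu_i=\sum_j t^i_j\, c_j\bigl(\tfrac{\beta_j}{\beta_i}-1\bigr)$.

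Now choose $I$ with $\beta_I=\min_i\beta_i$: then $\beta_j/\beta_I-1\ge 0$ for all $j$, and since $t^I_j>0$ and $c_j=f(X_j)\ge 0$, every summand of $\nu_I=\sum_j t^I_j\, c_j(\beta_j/\beta_I-1)$ is non-negative, so $\nu_I\ge 0$, contradicting $\nu_I<0$. This is exactly where $f(X_i)\ge 0$ is used (and Section~\ref{SCC} shows it is needed). Corollary~\ref{t:main} then follows at once, since replacing $f$ by $f+\lambda$ with $\lambda$ large merely shifts $R$ to $R-\lambda\,\id$ — preserving the $T_N'$ structure — while giving $K_{f+\lambda}\subset C_{f+\lambda}$ and $(f+\lambda)(X_i)\ge 0$, so Theorem~\ref{t:pos} applies. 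I expect the real obstacle to be the third step: obtaining the identities $\mu_i=0$ from $k_iE_i=Z_i-\sum_j t^i_jZ_j$ after inserting $Z_\ell=X_\ell^TY_\ell-\beta_\ell c_\ell\,\id$ — one must contract against the right objects (the directions $n_i$, and vectors built from the $u_i$) and keep careful track of which wave-cone orthogonalities are used, since a naive contraction such as taking the plain trace leaves uncontrolled quantities like $\sum_j t^i_j\langle X_j,Y_j\rangle$ or $\tr E_i$ that must be made to cancel; by comparison the general-dimension outer step and the reduction to distinct $X_i$ are routine, and the only remaining subtlety is the sign bookkeeping ensuring that the $\nu_i$ and $\mu_i$ combine into a genuine contradiction rather than a vacuous inequality.
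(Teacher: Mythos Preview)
Your overall outline matches the paper's: reduce to $P=0$, invoke Corollary~\ref{absurd} for the strict inequalities $\nu_i<0$, prove the identities $\mu_i=0$ from the third-block relation, and conclude by picking $\beta_I=\min_i\beta_i$. The gap is exactly where you flag it, but it is deeper than ``careful bookkeeping'': contracting the relation $k_iE_i=Z_i-\sum_jt^i_jZ_j$ (with $Z_\ell=X_\ell^TY_\ell-\beta_\ell c_\ell\,\id$) against $n_i$ does \emph{not} produce $\mu_i=0$ directly. What it produces (this is Lemma~\ref{TEC}) is the vector identity
\[
\sum_{j=1}^N k_j(k_j-1)\,t^i_j\,C_j^TD_j\,n_i \;=\; \mu_i\,n_i,
\]
so each $\mu_i$ is coupled to \emph{all} the $C_j^TD_j$'s; no single contraction isolates a fixed $\mu_i$. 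The paper then needs a genuinely new structural argument to decouple them: introduce the matrices $M_i=\mu\sum_{j<i}\alpha_jC_j^TD_j+\sum_{j\ge i}\alpha_jC_j^TD_j$ (with $\alpha_j=k_j(k_j-1)\lambda_j$), observe that $\mathrm{im}(C_j^TD_j)\subset\spn(n_j)$ and that each $C_j^TD_j$ is trace-free, and show that in the basis $\{n_i,n_{i+1},\dots,n_{i+N-1}\}$ the matrix $M_i$ is upper triangular with diagonal entries drawn from $\{\xi_j\mu_j,\mu\xi_j\mu_j\}$. Trace-freeness then yields a homogeneous linear system $Wx=0$ in the unknowns $x_j=\xi_j\mu_j$, and a separate combinatorial lemma (Lemma~\ref{W}) --- using $\mu>1$ and the specific pattern of $1$'s and $\mu$'s in $W$ --- proves $\Ker W=\{0\}$, hence $\mu_j=0$ for all $j$.

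In short: your proposal asserts $\mu_i=0$ and correctly explains why it finishes the proof, but the derivation of $\mu_i=0$ is the main content of the theorem and requires the eigenvalue/trace/$W$-matrix machinery just described, none of which appears in your sketch. A minor side remark: the paper does not separately ``dispose of the case $X_i=X_j$'' --- the argument goes through regardless, since the only role of the $T_N$ structure of the $X_i$'s is Lemma~\ref{l:linear}, which is pure linear algebra and does not need the $X_i$ distinct.
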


At the end of the section we will show the following

\begin{corollary}\label{t:main}
If $f\in C^1 (\R^{n\times m} )$ is strictly polyconvex, then $K_f$ does not contain any set $\{A_1, \ldots , A_N\}$ which induces a $T_N'$ configuration.
\end{corollary}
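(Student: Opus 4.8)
The plan is to derive Corollary \ref{t:main} from Theorem \ref{t:pos} by a reduction trick: the obstruction of non-negativity in the Theorem can always be made to hold after a harmless modification of the polyconvex integrand $f$. The key observation is that the set $K_f$ is, up to the value $\beta = 1$, a slice of $C_f$, and that replacing $f$ by $f + c$ for a suitable large constant $c$ does not change the PDE structure in a way that matters for $T_N'$ configurations. First I would record how $K_f$ and $C_f$ transform under the substitution $f \rightsquigarrow f + c$: since $D(f+c) = Df$, the first two blocks $X, Df(X)$ are untouched, while the third block becomes $X^TDf(X) - (f(X)+c)\id = Z - c\,\id$, i.e. every $Z_i$ is shifted by the \emph{same} matrix $-c\,\id$. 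Crucially, strict polyconvexity of $f$ is preserved (adding a constant to the convex function $g$), and a constant shift of all the $Z_i$ by the same matrix does not affect whether $\{A_1,\dots,A_N\}$ induces a $T_N'$ configuration: in the defining relations \eqref{formTprime} it amounts to replacing $R$ by $R - c\,\id$, leaving the increments $(C_i,D_i,E_i)$ and the coefficients $k_i$ unchanged, so conditions (a) and (b) of Definition \ref{definition_TN'} still hold.

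With this in hand, the argument is a short contradiction. Suppose $K_f$ contained a set $\{A_1,\dots,A_N\}$ inducing a $T_N'$ configuration, with $A_i = (X_i, Y_i, Z_i)$. Since the $X_i$ range over a finite set, we may pick a constant $c$ so large that $(f+c)(X_i) = f(X_i) + c \ge 0$ for every $i$. Define $\tilde f \doteq f + c$; then $\tilde f$ is strictly polyconvex and non-negative at each $X_i$. Set $\tilde A_i \doteq (X_i, Y_i, Z_i - c\,\id)$. By the first two paragraphs, $\{\tilde A_1,\dots,\tilde A_N\}$ still induces a $T_N'$ configuration, and moreover $\tilde A_i \in C_{\tilde f}$: indeed $A_i \in K_f$ means $Y_i = Df(X_i) = D\tilde f(X_i)$ and $Z_i = X_i^T Df(X_i) - f(X_i)\id$, so taking $\beta = 1$ in \eqref{CF} for $\tilde f$ gives exactly the third block $X_i^T D\tilde f(X_i) - \tilde f(X_i)\id = Z_i - c\,\id$. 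Thus $\{\tilde A_i\}$ is a $T_N'$ configuration contained in $C_{\tilde f}$ with $\tilde f(X_i) \ge 0$ for all $i$, contradicting Theorem \ref{t:pos} applied to $\tilde f$. Hence no such configuration exists in $K_f$.

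I expect the only point requiring a little care — and the natural place for a slip — to be the verification that a uniform shift of the $Z_i$ really is compatible with Definition \ref{definition_TN'}: one must check that it is the \emph{base point} $R$ (equivalently, $P,Q,R$) that absorbs the shift, not the wave-cone increments $E_i$, since the latter must keep summing to zero and keep lying in $\Lambda_{dc}$. This is immediate from the explicit formulas \eqref{formTprime} once one notes that summing the telescoping relations and using $\sum_\ell E_\ell = 0$ forces the shift to appear only in $R$; the coefficients $k_i > 1$ and the directions $n_i$ from Proposition \ref{p:T_N'_easy}(ii) are entirely unaffected. Everything else is a one-line substitution, so the corollary follows with essentially no computation beyond what Theorem \ref{t:pos} already provides.
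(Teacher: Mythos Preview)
Your argument is correct and is essentially identical to the paper's: the paper also shifts $f$ by a constant (specifically $F \doteq f - \min_i f(X_i)$), observes that this preserves strict polyconvexity, and checks that the corresponding shift $Z_i' = Z_i + \min_j f(X_j)\,\id$ keeps the $T_N'$ structure while landing the new matrices in $K_F \subset C_F$ with $F(X_i) \ge 0$, whence Theorem \ref{t:pos} yields the contradiction. Your extra care in verifying that the shift is absorbed by $R$ and leaves the $E_i$, $k_i$, and wave-cone conditions untouched is a point the paper leaves implicit.
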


Let us fix the notation. We will always consider $T'_N$ configurations of the following form:

\begin{equation}\label{mainform}
A_i\doteq  \left(
\begin{array}{cc}
X_i\\ 
Y_i\\
Z_i\\
\end{array}
\right),\quad \; X_i,Y_i\in \R^{n\times m}, Z_i \in \R^{m\times m},
\end{equation}
with:
\begin{equation}\label{fixnot}
X_i = P + \sum_{j = 1}^{i - 1}C_j + k_iC_i, \; Y_i = Q + \sum_{j = 1}^{i - 1}D_j + k_iD_i, \;  Z_i = R + \sum_{j = 1}^{i - 1}E_j + k_iE_i,
\end{equation}
and we denote with $n_i \in \mathbb{S}^{m - 1}$ the vectors such that
\[
D_in_i = 0, E_in_i = 0, C_iv = 0,\quad \forall v \perp n_i,\; \forall 1\le i \le N.
\]

\subsection{Idea of the proof}\label{IDEA}

Before proving the theorem, let us give an idea of the key steps of the proof. First of all, in Lemma \ref{INCsimp}, we will see that without loss of generality we can choose $P = 0$. As already explained in Subsection \ref{STRA}, we want to prove that the system of inequalities
\begin{equation}\label{maininequalities}
\nu_i \doteq  \beta_ic_i - \sum_{j}\beta_it_j^ic_j -k_i\langle Y_i,C_i\rangle< 0, \forall i \, ,
\end{equation}
cannot be fulfilled at the same time. This gives a contradiction with Corollary \ref{absurd}. In particular, we show that for the index $\sigma$ such that $\beta_{\sigma} = \min_j\beta_j $,
\[
\nu_{\sigma} \ge 0.
\] 
To do so, we prove that the quantities
\begin{equation}\label{mui}
\mu_i\doteq  -\beta_ic_i + \sum_{j}\beta_jt_j^ic_j +k_i\langle Y_i,C_i\rangle
\end{equation}
equal to $0$ for every $i$. Then, choosing $\sigma$ as above and exploiting the positivity of $c_j, \forall j$, we estimate
\begin{equation}\label{contradiction2}
0 = -\mu_{\sigma} = \beta_{\sigma}c_{\sigma} - \sum_{j}\beta_jt_j^{\sigma}c_j -k_{\sigma}\langle Y_{\sigma},C_{\sigma}\rangle \le  \beta_{\sigma}c_{\sigma} - \sum_{j}\beta_{\sigma}t_j^{\sigma}c_j -k_{\sigma}\langle Y_{\sigma},C_{\sigma}\rangle = \nu_{\sigma}.
\end{equation}
This will then yield the required contradiction. In order to show $\mu_i = 0,\forall 1\le i \le N$, we consider $N$ matrices $M_i$ defined as
\[
M_i \doteq  \mu\sum_{j \le i -1}\alpha_jC_j^TD_j + \sum_{j \ge i}\alpha_jC_j^TD_j,
\]
where $\mu > 1$ is part of the defining vector of the $T_N$ configuration $\{X_1,\dots,X_N\}$, compare \ref{defvec}, and $\alpha_j$ are real numbers. We prove that for numbers $\xi_j > 0$, a subset $\mathcal{I}_i \subset \{\xi_1\mu_1,\dots, \xi_N\mu_N\}$ is made of generalized eigenvalues of $M_i$, see \eqref{gen}. This is achieved thanks to Lemma \ref{TEC}. Since $M_i$ is trace-free, as can be seen by the structure of $C_j$ and $D_j$, we will find $N$ relations of the form
\[
\sum_{\xi_j\mu_j \in \mathcal{I}_i}\xi_j\mu_j = 0.
\]
This can be read as the equations for the kernel for a specific matrix $N\times N$ matrix, $W$. Proving that $W$ has trivial kernel will yield $\xi_j\mu_j = 0,\forall j$, and thus $\mu_j = 0$ since $\xi_j > 0$. The proof of the invertibility of $W$ is the content of the last Lemma \ref{W}.

\subsection{Proof of Theorem \ref{t:pos}}

\begin{lemma}\label{INCsimp}
If $f$ is a strictly polyconvex function such that $A_i \in C_f$, $\forall 1\le i \le N$ and $f(X_i) \ge 0, \forall 1\le i\le N$, then there exists another strictly polyconvex function $F$ such that the $T_N'$ configuration $B_i$ defined as
\[
B_i =\left(
\begin{array}{cc}
X_i - P\\ 
Y_i\\
Z_i - P^TY_i\\
\end{array}
\right)
\]
satisfies $B_i \in C_F,$ for every $1\le i \le N$ and moreover $F(X_i - P) \ge 0, \forall i$.
\end{lemma}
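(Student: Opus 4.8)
The plan is to exhibit the function $F$ explicitly and then check each of the three asserted properties. The natural candidate, suggested by the change of variables $X \mapsto X - P$, is
\[
F(X) \doteq f(X + P).
\]
First I would verify that $F$ is strictly polyconvex. This is essentially immediate: translating the argument by a fixed matrix $P$ induces an affine change in the minors $\Phi(X+P)$ of degree $\le r$ for each $r$, so that $\Phi(X+P) = L(\Phi(X))$ for a fixed affine bijection $L$ of $\R^k$ (the linear part being block upper-triangular in the grading by degree of minor, with identity blocks on the diagonal, hence invertible); thus $F(X) = g(\Phi(X+P)) = (g\circ L)(\Phi(X))$, and $g\circ L$ is strictly convex on $\R^k$ because $g$ is and $L$ is an affine bijection. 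Consequently $F = \tilde g \circ \Phi$ with $\tilde g \doteq g\circ L$ strictly convex, which is the required form. Also $DF(X) = Df(X+P)$ and, crucially for the last claim, $F(X_i - P) = f(X_i) \ge 0$ by hypothesis, so the non-negativity condition is preserved for free.

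Next I would check that $B_i \in C_F$ for each $i$. By hypothesis $A_i \in C_f$, so there are $\beta_i > 0$ with $Y_i = \beta_i Df(X_i)$ and $Z_i = \beta_i X_i^T Df(X_i) - \beta_i f(X_i)\id$. Writing $\tilde X_i \doteq X_i - P$, we have $DF(\tilde X_i) = Df(X_i)$, hence
\[
\beta_i DF(\tilde X_i) = \beta_i Df(X_i) = Y_i,
\]
which matches the middle block of $B_i$. For the bottom block, I compute
\[
\beta_i \tilde X_i^T DF(\tilde X_i) - \beta_i F(\tilde X_i)\id
= \beta_i (X_i - P)^T Df(X_i) - \beta_i f(X_i)\id
= Z_i - P^T\big(\beta_i Df(X_i)\big)
= Z_i - P^T Y_i,
\]
which is exactly the bottom block of $B_i$. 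Therefore $B_i \in C_F$ with the same multiplier $\beta_i$.

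Finally I should record that $\{B_i\}$ is again a $T'_N$ configuration, so that the statement is not vacuous; this follows because the map $A_i \mapsto B_i$ is of the block form
\[
\begin{pmatrix} X\\ Y\\ Z\end{pmatrix} \longmapsto \begin{pmatrix} X - P\\ Y\\ Z - P^TY\end{pmatrix},
\]
which is an affine map whose linear part is invertible and maps $\Lambda_{dc}$ into itself: if $(C,D,E)$ has $C = u\otimes n$, $Dn = 0$, $En = 0$, then its image is $(C, D, E - P^TD)$, and $(E - P^TD)n = En - P^T(Dn) = 0$, while $C$ and $D$ are unchanged; so wave-cone membership and the summation-to-zero conditions in Definition~\ref{definition_TN'} are preserved (with the same $C_i, D_i$ and with $E_i$ replaced by $E_i - P^TD_i$, whose sum is still $0$), and the new base point is $(P,Q,R)\mapsto(0,Q,R-P^TQ)$, in particular with vanishing first block as desired. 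I do not expect any real obstacle here; the only mild subtlety is checking that the affine change of variables on minors induced by $X\mapsto X+P$ is genuinely a bijection of $\R^k$ so that strict convexity transfers, which is the block-triangular-with-unit-diagonal observation above.
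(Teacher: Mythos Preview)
Your proposal is correct and follows exactly the same approach as the paper: define $F(X)\doteq f(X+P)$ and verify the required properties. The paper's proof is a three-line sketch that declares each verification ``clear'' or ``straightforward,'' whereas you have actually carried out those computations (strict polyconvexity of $F$ via the affine bijection on minors, the block-by-block check that $B_i\in C_F$, and the preservation of the $T_N'$ structure under the affine map $(X,Y,Z)\mapsto(X-P,Y,Z-P^TY)$); there is no substantive difference in strategy.
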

\begin{proof}
Simply define the new polyconvex function $F(X)$ by $F(X)\doteq f(X + P)$. Clearly the newly defined family $\{B_1,\dots B_N\}$ still induces a $T'_N$ configuration, and it is straightforward that $B_i \in C_F$. Moreover, this does not affect positivity, in the sense that $F(X_i - P) = f(X_i - P + P) = f(X_i) \ge 0$.
\end{proof}

\begin{lemma}\label{TEC}
Suppose $A_i \in C_f$, $\forall i$, and $P = 0$. Then, for every $i \in \{1,\dots, N\}$:
\[
\sum_{j = 1}^N k_j(k_j - 1)t^i_jC_j^TD_jn_i = \left(k_i\langle C_i,Y_i\rangle - \beta_ic_i  + \sum_{j = 1}^N\beta_jt_j^ic_j\right)n_i \overset{\eqref{mui}}{=} \mu_in_i,\quad \forall i = 1,\dots, N,
\]
where $t^i$ is the vector defined in \eqref{defnt}.
\end{lemma}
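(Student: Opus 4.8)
The plan is to read the identity off the third block of the inclusion, namely $Z_i=X_i^TY_i-\beta_ic_i\,\id$ with $c_i\doteq f(X_i)$, using only the algebraic structure of the $T_N'$ configuration; note that neither polyconvexity nor positivity of $f$ will be used here (they enter only in the subsequent contradiction). Throughout I abbreviate $S_i^C\doteq\sum_{l<i}C_l$, and likewise $S_i^D,S_i^E$, so that \eqref{fixnot} with $P=0$ becomes $X_i=S_i^C+k_iC_i$, $Y_i=Q+S_i^D+k_iD_i$, $Z_i=R+S_i^E+k_iE_i$.

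First I would feed the $Z$-block into Lemma \ref{l:linear}. Applied to $Z_1,\dots,Z_N$ (recall $\sum_l E_l=0$) it yields $\sum_j t_j^iZ_j=R+S_i^E=Z_i-k_iE_i$; substituting $Z_j=X_j^TY_j-\beta_jc_j\,\id$, evaluating the resulting matrix identity on $n_i$, and using $E_in_i=0$ together with $Z_in_i=X_i^TY_in_i-\beta_ic_in_i$, I get
\[
\sum_j t_j^iX_j^TY_jn_i-X_i^TY_in_i=\Big(\sum_j\beta_jt_j^ic_j-\beta_ic_i\Big)n_i .
\]
Call this identity $(\mathrm A)$.

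The heart of the proof --- and the step I expect to be the main obstacle --- is a bilinear summation formula, the div-curl analogue of the minor identities in Proposition \ref{analogm}:
\[
\sum_j t_j^iX_j^TY_j=(X_i-k_iC_i)^T(Y_i-k_iD_i)+\sum_j k_j(k_j-1)t_j^iC_j^TD_j .
\]
To prove it I would expand $X_j^TY_j=(S_j^C+k_jC_j)^T(Q+S_j^D+k_jD_j)$ into its six terms and regroup them as
\[
X_j^TY_j=X_j^TQ+\big(g_j+k_j\Delta_j\big)+k_j(k_j-1)\,C_j^TD_j ,
\]
where $g_j\doteq(S_j^C)^TS_j^D$ and $\Delta_j\doteq g_{j+1}-g_j=(S_j^C)^TD_j+C_j^TS_j^D+C_j^TD_j$ (the spurious $\pm k_jC_j^TD_j$ cancel). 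Each of the first two groups has exactly the shape required by Lemma \ref{l:linear}: for $X_j^TQ=\sum_{l<j}C_l^TQ+k_jC_j^TQ$ one uses $\sum_l C_l^TQ=0$, while for $g_j+k_j\Delta_j=\sum_{l<j}\Delta_l+k_j\Delta_j$ one uses the crucial telescoping cancellation $\sum_l\Delta_l=g_{N+1}=\big(\sum_l C_l\big)^T\big(\sum_l D_l\big)=0$; the third group is untouched. Summing the three outputs of Lemma \ref{l:linear} and simplifying $(X_i-k_iC_i)^TQ+(X_i-k_iC_i)^T(Y_i-Q-k_iD_i)=(X_i-k_iC_i)^T(Y_i-k_iD_i)$ gives the formula; call it $(\mathrm B)$.

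Finally I would combine $(\mathrm A)$ and $(\mathrm B)$. Evaluating $(\mathrm B)$ on $n_i$ and subtracting $X_i^TY_in_i$, the terms containing $D_in_i$ drop out (as $D_in_i=0$), leaving $(X_i-k_iC_i)^T(Y_i-k_iD_i)n_i=X_i^TY_in_i-k_iC_i^TY_in_i$; and since $C_i=u_i\otimes n_i$ we have $C_i^TY_in_i=\langle C_i,Y_i\rangle n_i$. Hence the left-hand side of $(\mathrm A)$ equals $-k_i\langle C_i,Y_i\rangle n_i+\sum_j k_j(k_j-1)t_j^iC_j^TD_jn_i$ as well, and comparing with $(\mathrm A)$ gives
\[
\sum_j k_j(k_j-1)t_j^iC_j^TD_jn_i=\Big(k_i\langle C_i,Y_i\rangle-\beta_ic_i+\sum_j\beta_jt_j^ic_j\Big)n_i=\mu_in_i ,
\]
which is the assertion. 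The only genuinely delicate point is $(\mathrm B)$: once the right regrouping of $X_j^TY_j$ is spotted, the rest consists of three applications of Lemma \ref{l:linear} and the bookkeeping $D_in_i=E_in_i=0$.
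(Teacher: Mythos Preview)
Your proof is correct and, in fact, cleaner than the paper's. Both arguments start from the same identity $(\mathrm A)$, obtained by applying Lemma~\ref{l:linear} to the $Z$-block and evaluating on $n_i$; the difference lies entirely in how the bilinear sum $\sum_j t_j^i X_j^TY_j$ is handled. The paper computes this sum first for $i=1$ by expanding $\sum_j\lambda_j X_j^TY_j=\sum_{a,b}g_{ab}C_a^TD_b$ with explicit coefficients $g_{ab}$ (obtained from \eqref{defnk}), simplifying via $\sum_l C_l=\sum_l D_l=0$; it then treats general $i$ by splitting $\xi_i t^i=\lambda+(\mu-1)(\lambda_1,\dots,\lambda_{i-1},0,\dots,0)$ and redoing a parallel coefficient computation for the partial sum $\sum_{j<i}\lambda_j X_j^TY_j$. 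Your route instead isolates a structural formula $(\mathrm B)$ valid for all $i$ at once, by rewriting $X_j^TY_j=X_j^TQ+(g_j+k_j\Delta_j)+k_j(k_j-1)C_j^TD_j$ so that the first two groups are exactly of Lemma~\ref{l:linear} shape (the telescoping $\sum_l\Delta_l=g_{N+1}=0$ replacing the explicit $g_{ab}$ bookkeeping). This buys you a uniform, index-free derivation and avoids the separate $i=1$/general-$i$ split; the paper's approach, while longer, has the minor advantage of making the role of the defining-vector relations \eqref{defnk} completely explicit. Both collapse to the same endgame once $D_in_i=E_in_i=0$ and $C_i^TY_in_i=\langle C_i,Y_i\rangle n_i$ are invoked.
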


\begin{proof}
We need to compute the following sums:
\begin{equation}\label{finalsum?}
\sum_jt_j^iZ_j = \sum_jt_j^iX^T_jY_j - \sum_jt_j^ic_j\beta_j\id.
\end{equation}
Let us start computing the sum for $i = 1$, $\sum_j\lambda_jX_j^TY_j.$ First, notice that
\[
 \sum_j\lambda_j X_j^TY_j =  \sum_j\lambda_j X_j^T(Y_j-Q) + \sum_j\lambda_j X_j^TQ = \sum_j\lambda_j X_j^T(Y_j-Q),
\]
since, by Lemma \ref{l:linear} or \eqref{e:sum_minor},
\[
\sum_j \lambda_j X_j^TQ = P^TQ = 0.
\]
We rewrite it in the following way:

\begin{equation}\label{quadsum}
\begin{split}
 \sum_j\lambda_j X_j^TY_j &=  \sum_j\lambda_j X_j^T(Y_j-Q) \\
&= \sum_{j = 1}^N\lambda_j\left(\sum_{1\le a,b\le j - 1}C_a^TD_b + k_j\sum_{1\le a \le j - 1} C_a^TD_j + k_j\sum_{1\le b \le j - 1} C_j^TD_b + k_j^2C_j^TD_j\right) \\
&=\sum_{i,j}g_{ij}C_i^TD_j,
\end{split}
\end{equation}
where we collected in the coefficients $g_{ij}$ the following quantities:
\[
g_{ij}=
\begin{cases}
 \lambda_ik_i + \sum_{r = i + 1}^N\lambda_r,\text{ if } i \neq j\\
\lambda_ik_i^2 + \sum_{r = i + 1}^N\lambda_r,\text{ if } i = j.
\end{cases}
\]
Using \eqref{defnk}, we have, if $i \neq j$:
\[
g_{ij} = g_{ji} = \lambda_ik_i + \sum_{r = i + 1}^N\lambda_r = \frac{\mu}{\mu - 1}.
\]
On the other hand, again using \eqref{defnk},
\[
g_{ii} = k_i^2\lambda_i + \sum_{r = i + 1}^N\lambda_r = k_i(k_i - 1)\lambda_i + \frac{\mu}{\mu - 1}.
\]
Using the equalities $\sum_\ell C_\ell = 0 = \sum_\ell D_\ell$, then also $\sum_{i,j}C_i^TD_j = 0$, and so $\sum_{i\neq j}C_i^TD_j = -\sum_iC_i^TD_i$. Hence, \eqref{quadsum} becomes
\[
 \sum_{i,j}g_{ij}C_i^TD_j = \frac{\mu}{\mu - 1}\sum_{i\neq j}C_i^TD_j + \sum_i\left( k_i(k_i - 1)\lambda_i + \frac{\mu}{\mu - 1}\right)C_i^TD_i =  \sum_i k_i(k_i - 1)\lambda_iC_i^TD_i.
\]
We just proved that
\begin{equation}\label{b}
\sum_j\lambda_jX_j^TY_j =  \sum_j k_j(k_j - 1)\lambda_jC_j^TD_j.
\end{equation}
Recall the definition of $t^i$, namely 
$$t^i= \frac{1}{\xi_i}(\mu\lambda_1,\dots,\mu\lambda_{i - 1},\lambda_i,\dots,\lambda_N)\, .$$
By the previous computation ($i = 1$), it is convenient to rewrite \eqref{finalsum?} using \eqref{b} as
\begin{equation}\label{rew}
R + \sum_{j=1}^{i - 1}E_j = \frac{1}{\xi_i}\left(\sum_j k_j(k_j - 1)\lambda_jC_j^TD_j + (\mu - 1)\sum_{j = 1}^{i - 1}\lambda_jX_j^TY_j\right) - \sum_jt_j^ic_j\beta_j\id.
\end{equation}
In the previous equation, we have used the equality
\begin{equation}\label{Zsum}
\sum_{j = 1}^Nt_j^iZ_j = R + \sum_{j=1}^{i - 1}E_j,\quad \forall i \in \{1,\dots, N\},
\end{equation}
that easily follows from Lemma \ref{l:linear}. Once again, let us express the sum up to $i - 1$ in the following way:
\[
\sum_{j = 1}^{i - 1}\lambda_jX_j^TY_j = \sum_{j = 1}^{i - 1}\lambda_jX_j^TQ + \sum_{j = 1}^{i - 1}\lambda_jX_j^T(Y_j-Q) = \sum_{j = 1}^{i - 1}\lambda_j X_j^TQ+ \sum_{k,j}^{i - 1}s_{kj}C_k^TD_j.
\]
A combinatorial argument analogous to the one in the previous case gives
\begin{align*}
&
s_{\ell\ell}= k_\ell^2\lambda_\ell + \dots + \lambda_{i - 1}
 \\
&\phantom{s_{\ell\ell}}= (k_\ell^2 - k_\ell)\lambda_\ell + k_\ell\lambda_\ell  + \dots + \lambda_{i - 1},
\\
& s_{\alpha\beta} = k_\alpha\lambda_\alpha + \dots + \lambda_{i - 1},\qquad\alpha\neq\beta\\
\end{align*}
Now
\[
 k_r\lambda_r + \dots + \lambda_{i - 1}=\frac{\mu( \sum_{j = 1}^{i - 1}\lambda_j) + \sum_{j = i}^{N}\lambda_j }{\mu - 1}
\]
and so
\[
 k_r\lambda_r + \dots + \lambda_{i - 1}=\frac{(\mu -1)( \sum_{j = 1}^{i - 1}\lambda_j) + 1 }{\mu - 1} = \frac{\xi_i}{\mu - 1} =: b_{i - 1}
\]
Hence
\[
\sum_{j = 1}^{i - 1}\lambda_jX_j^TY_j = \sum_{j = 1}^{i - 1}\lambda_j X_j^TQ+ \sum_{k,j}^{i - 1}s_{kj}C_k^TD_j =\sum_{j = 1}^{i - 1}\lambda_j X_j^TQ + b_{i - 1}\sum_{k,j}^{i - 1}C_k^TD_j + \sum_{j = 1}^{i - 1} k_j(k_j - 1)\lambda_j C_j^TD_j.
\]
We rewrite \eqref{rew} as
\begin{equation}\label{useful}
\begin{split}
R + \sum_{j = 1}^{i - 1}E_j&= \frac{1}{\xi_i}\left(\sum_j k_j(k_j - 1)\lambda_jC_j^TD_j + \xi_i\sum_{k,j}^{i - 1}C_k^TD_j +  (\mu - 1)\sum_{j = 1}^{i - 1} (k_j(k_j - 1)\lambda_j C_j^TD_j + \lambda_j X_j^TQ)\right) \\
&- \sum_j\beta_jt_j^ic_j\id
\end{split}
\end{equation}
Now we substitute \eqref{useful} in the definition \eqref{fixnot} of $Z_i$ in order to compute $E_i$:
\begin{align*}
k_iE_i &+ \frac{1}{\xi_i}\left(\sum_j k_j(k_j - 1)\lambda_jC_j^TD_j + \xi_i\sum_{k,j}^{i - 1}C_k^TD_j +  (\mu - 1)\sum_{j = 1}^{i - 1} (k_j(k_j - 1)\lambda_j C_j^TD_j + \lambda_j X_j^TQ)\right) \\
&- \sum_j\beta_jt_j^ic_j\id = X_i^TY_i - \beta_ic_i\id.
\end{align*}
Multiply by $n_i$ the previous expression and recall that $E_in_i = 0$ to find:
\begin{equation}\label{sp}
\begin{split}
\frac{1}{\xi_i}&\left(\sum_j k_j(k_j - 1)\lambda_jC_j^TD_jn_i + \xi_i\sum_{k,j}^{i - 1}C_k^TD_jn_i +  (\mu - 1)\sum_{j = 1}^{i - 1} (k_j(k_j - 1)\lambda_j C_j^TD_j n_i + \lambda_j X_j^TQn_i)\right) \\
&- \sum_j\beta_jt_j^ic_jn_i =  X_i^TY_in_i - \beta_ic_in_i.
\end{split}
\end{equation}
Now notice that, since $D_in_i = 0$,
\[
\begin{split}
X_i^TY_in_i &= X_i^TQn_i + \sum_{j,k}^{i- 1}C_{k}^TD_jn_i + k_i\sum_{j = 1}^{i-1}C_i^TD_jn_i + k_i\sum_{j = 1}^{i-1}C_j^TD_in_i + k_i^2C_i^TD_in_i \\
&= X_i^TQn_i + \sum_{j,k}^{i- 1}C_{k}^TD_jn_i + k_i\sum_{j = 1}^{i-1}C_i^TD_jn_i.
\end{split}
\]
Thus \eqref{sp} becomes
\begin{equation}\label{sp1}
\begin{split}
\frac{1}{\xi_i}&\left(\sum_j k_j(k_j - 1)\lambda_jC_j^TD_jn_i +  (\mu - 1)\sum_{j = 1}^{i - 1} (k_j(k_j - 1)\lambda_j C_j^TD_j n_i + \lambda_j X_j^TQn_i)\right) \\
&- \sum_j\beta_jt_j^ic_jn_i = X_i^TQn_i + k_i\sum_{j = 1}^{i-1}C_i^TD_jn_i - \beta_ic_in_i.
\end{split}
\end{equation}
Now we need to compute
\[
\sum_{j = 1}^{i - 1}\lambda_j X_j = \sum_{j = 1}^{i- 1}y_jC_j,
\]
and
\[
y_j = k_j\lambda_j + \dots + \lambda_{i - 1} = \frac{\xi_i}{\mu - 1}, \; \forall j \in \{1,\dots, i - 1\}.
\]
Using this computation, \eqref{sp1} reads as:
\begin{equation}\label{good}
\begin{split}
\frac{1}{\xi_i}&\left(\sum_j k_j(k_j - 1)\lambda_jC_j^TD_jn_i +  (\mu - 1)\sum_{j = 1}^{i - 1} k_j(k_j - 1)\lambda_j C_j^TD_j n_i\right) + \sum_{j = 1}^{i - 1}C_j^TQn_i - \sum_j\beta_jt_j^ic_jn_i = \\
&X_i^TQn_i + k_i\sum_{j = 1}^{i-1}C_i^TD_jn_i - \beta_ic_in_i.
\end{split}
\end{equation}
Exploiting the definition of $t_j^i$, we see that we can rewrite
\[
\frac{1}{\xi_i}\left(\sum_j k_j(k_j - 1)\lambda_jC_j^TD_jn_i +  (\mu - 1)\sum_{j = 1}^{i - 1} k_j(k_j - 1)\lambda_j C_j^TD_j n_i\right) = \sum_j k_j(k_j - 1)t^i_jC_j^TD_jn_i,
\]
and
\begin{align*}
X_i^TQn_i &+ k_i\sum_{j = 1}^{i-1}C_i^TD_jn_i - \sum_{j = 1}^{i - 1}C_j^TQn_i \\
&= \sum_{j = 1}^{i - 1}C_j^TQn_i + k_iC^T_iQn_i + k_i\sum_{j = 1}^{i-1}C_i^TD_jn_i - \sum_{j = 1}^{i - 1}C_j^TQn_i = k_iC_i^TY_in_i.
\end{align*}
Thus \eqref{good} becomes
\begin{align*}
\sum_j k_j(k_j - 1)t^i_jC_j^TD_jn_i - \sum_j\beta_jt_j^ic_jn_i = k_i\sum_{j = 1}^{i-1}C_i^TY_in_i - \beta_ic_in_i.
\end{align*}
Since $C_iv = 0, \forall v\perp n_i$, we have $C_i^TY_in_i = \langle C_i,Y_i\rangle n_i$, and we finally obtain the desired equalities:
\[
\sum_{j = 1}^N k_j(k_j - 1)t^i_jC_j^TD_jn_i = \left(k_i\langle C_i,Y_i\rangle - \beta_ic_i  + \sum_{j = 1}^N\beta_jt_j^ic_j\right)n_i, \quad \forall i = 1,\dots, N.
\]

\end{proof}

We are finally in position to prove the main Theorem.

\begin{proof}[Proof of Theorem \ref{t:pos}]
Assume by contradiction the existence of a $T_N'$ configuration induced by matrices $\{A_1, \ldots , A_N\}$ of the form \eqref{mainform} which belong to the inclusion set $C_f$ of some stictly polyconvex function $f\in C^1 (\R^{n\times m})$ and $f(X_i) \ge 0$ for every $i$. We can assume, without loss of generality by Lemma \ref{INCsimp}, that 
\[
P = 0\, .
\]
\noindent Using Lemma \ref{TEC}, we find
\begin{equation}\label{INCeigen}
\sum_{j = 1}^N k_j(k_j - 1)t^i_jC_j^TD_jn_i = \left(k_i\langle C_i,Y_i\rangle - \beta_ic_i  + \sum_{j = 1}^N\beta_jt_j^ic_j\right)n_i = \mu_in_i,\quad \forall i.
\end{equation}
Define $\alpha_{j}\doteq  k_j(k_{j} -1)\lambda_j > 0$, and
\[
M_i\doteq  \mu\sum_{j \le i -1}\alpha_jC_j^TD_j + \sum_{j \ge i}\alpha_jC_j^TD_j,
\]
for $i \in \{1,\dots, N\}$. Also set
\[
M_i\doteq  \mu M_{i - N},\quad \forall i \in \{N + 1,\dots, 2N\}.
\]
Then, \eqref{INCeigen} can be rewritten as
\begin{equation}\label{icase}
M_in_i = \xi_i\mu_in_i,\quad \forall i \in \{1,\dots,N\}.
\end{equation}
We define $n_{s}\doteq n_{s - N}$, for $s \in \{N + 1,\dots, 2N\}$. As explained in Subsection \ref{IDEA}, the rough idea is to show that a subset of the vectors $n_j$ are \emph{generalized eigenvectors}  and a subset of \emph{$\xi_j \mu_j$ are generalized eigenvalues} of $M_i$. In particular, for every $i \in \{1,\dots,N\}$, we want to show the following equalities:
\begin{equation}\label{gen}
\begin{cases}
M_in_{i + a} = \xi_{i + a}\mu_{i + a}n_{i + a} + v_{i,a}, &\text{ if } a: i \le i + a \le N\\
M_in_{i + a} = \mu\xi_{i + a}\mu_{i + a}n_{i + a} + v_{i,a}, &\text{ if } a: N + 1 \le i + a \le N + i - 1,
\end{cases}
\end{equation}
where $v_{i,a} \in \spn\{n_i,\dots, n_{i + a - 1}\}$. From now on, we fix $i \in \{1,\dots, N\}$. To prove \eqref{gen}, first we rewrite
\begin{equation}
M_{i}n_{i + a} = (M_{i} - M_{i + a})n_{i + a} + M_{i + a}n_{i + a},
\end{equation}
and then we use \eqref{icase} to obtain
\[
(M_{i} - M_{i + a})n_{i + a} + M_{i + a}n_{i + a} = \begin{cases}
\xi_{i + a}\mu_{i + a}n_{i + a} + (M_{i} - M_{i + a})n_{i + a}, \text{ if } i + a \le N,\\
\mu\xi_{i + a}\mu_{i + a}n_{i + a} + (M_{i} - M_{i + a})n_{i + a}, \text{ if } i + a > N.
\end{cases}.
\]
To conclude the proof of \eqref{gen}, we only need to show that
\begin{equation}\label{via}
(M_{i} - M_{i + a})n_{i + a} \in \spn\{n_i,\dots, n_{i + a - 1}\}, \quad \forall a \in \{0,\dots, N-1\}.
\end{equation}
To do so, we compute $M_i - M_{i + a}$. Let us start from the case $1\le i + a \le N$:
\begin{align*}
M_{i} - M_{i + a} &= \mu\sum_{j < i}\alpha_jC_j^TD_j + \sum_{j \ge i}\alpha_jC_j^TD_j - \mu\sum_{j < i + a}\alpha_jC_j^TD_j - \sum_{j \ge i + a}\alpha_jC_j^TD_j\\
&= \sum_{i \le j < i +a}\alpha_jC_j^TD_j - \mu\sum_{i \le j < i + a}\alpha_jC_j^TD_j.
\end{align*}
On the other hand, if $N + 1 \le i + a \le i + N - 1$, then
\begin{align*}
M_{i} - M_{i + a} &= M_{i} - \mu M_{i + a - N} \\
&= \mu\sum_{j < i}\alpha_jC_j^TD_j + \sum_{j \ge i}\alpha_jC_j^TD_j - \mu^2\sum_{j < i + a - N}\alpha_jC_j^TD_j - \mu\sum_{j \ge i + a - N}\alpha_jC_j^TD_j\\
&= \mu\sum_{j <  i + a - N}\alpha_jC_j^TD_j - \mu\sum_{j \ge  i}\alpha_jC_j^TD_j + \sum_{j \ge i}\alpha_jC_j^TD_j - \mu^2\sum_{j < i + a - N}\alpha_jC_j^TD_j.
\end{align*}
Now the crucial observation is that, due to the fact that of $C_jv = 0$ for every $v\perp n_j$, the image of $C_j^TD_j$ is contained in the line $\spn(n_j)$, for every $j \in \{1,\dots, N\}$. Therefore, the previous computations prove \eqref{via} and hence \eqref{gen}. Now we introduce
\[
V_i\doteq  \{n_i,n_{i + 1}, n_{i + 2},\dots, n_{N}, n_{N + 1}, \dots, n_{N + i - 1}\}.
\]
We can extract a basis for $\spn(V_i)$ in the following way. First, choose indexes
\begin{equation}\label{S}
\overline S_i\doteq \{k: k = i \text{ or } i< k \le N + i - 1, n_k \notin \spn(n_i,\dots,n_{k - 1})\}.
\end{equation}
Then, consider the basis $\mathcal{B}_i\doteq \{n_k: k \in \overline{S}_i\}$ for $\spn(V_i)$. Since $$\spn(\mathcal{B}_i) = \spn(\{n_1,\dots, n_N\}),\quad \forall i,$$ then $\#S_i = C \le \min\{m,N\}, \forall i$. Indexes in $\overline S_i$ lie in the set $\{1,\dots, 2N\}$. For technical reasons, we also need to consider the \emph{modulo $N$} counterpart of $\overline S_i$, that is
\begin{equation}\label{SNO}
S_i\doteq  \{k \in \{1,\dots, N\}: k \in \overline{S}_i \text{ or } k + N \in \overline{S}_i\}.
\end{equation}
In $S_i$, consider furthermore $S_i'\doteq  S_i \cap \{i,\dots, N\}$, $S_i''\doteq  S_i\cap\{1,\dots,i - 1\}$. If necessary, complete $\mathcal{B}_i$ to a basis of $\R^m$ made with elements $\gamma_j$ orthogonal to the ones of $\mathcal{B}_i$. Note that, since $\im(C_i^TD_i) \subset \spn(n_i)$, then $\im(M_i)\subset \{n_1,\dots, n_N\}$. Then, the associated matrix to $M_i$ with respect to $\mathcal{B}_i$ is
\begin{equation}\label{Mform}
M_i=
\left(
\begin{array}{c|c}
\begin{matrix}
    a_{1i} &* &* & \dots  &* \\
    0 &  a_{2i} &* & \dots  &* \\
    \vdots & \vdots & \vdots & \ddots & \vdots \\
    0 & 0 &0 & \dots  &   a_{Ci}
\end{matrix}

& \mathbf{T}\\ \hline
\mathbf{0}_{m - C,C} & \mathbf{0}_{m - C,m - C}
\end{array}
\right).
\end{equation}
We denoted with $\mathbf{0}_{c,d}$ the zero matrix with $c$ rows and $d$ columns, with $\mathbf{T}$ the $C\times (m - C)$ matrix of the coefficients of $M_i\gamma_j$ with respect to $\{n_s:s\in \overline{S}_i\}$, and with $\mathbf{*}$ numbers we are not interested in computing explicitely. Finally, we have chosen an enumeration $s_1< s_2<\dots < s_\ell < \dots < s_C$ of the elements of $\overline{S}_i$, and we have defined
\[
a_{\ell i}= \begin{cases}
\xi_{s_{\ell}}\mu_{s_{\ell}}, &\text{ if } s_{\ell} \in S_i',\\
\mu\xi_{s_{\ell}-N}\mu_{s_{\ell}-N}, &\text{ if } s_{\ell} - N \in S_i''.
\end{cases}
\]
The triangular form of the matrix representing $M_i$ is exactly due to \eqref{gen}. Now, $\tr(M_i) = 0, \forall i$, since $C_i^TD_i$ is trace-free for every $i$. This implies that the matrix in \eqref{Mform} must be trace-free, hence:
\begin{equation}\label{ker}
0 = \tr(M_i) = \sum_{\ell = 1}^C a_{\ell i} = \sum_{a \in S'_i}\xi_{a}\mu_{a} + \mu \sum_{b \in S''_i}\xi_{b}\mu_{b}.
\end{equation}
We have thus reduced the problem to the following simple Linear Algebra statement: we wish to show that, if $W$ is the $N\times N$ matrix defined as
\[
W_{ij}=
\begin{cases}
1, & \text{ if } j \in S_i',\\
\mu, & \text{ if } j \in S_i'',\\
0, & \text{ if } j \notin S_i,
\end{cases}
\]
then, $Wx = 0 \Rightarrow x = 0$. By \eqref{ker}, the vector $x \in \R^{N}$ defined as $x_j\doteq  \xi_j\mu_j, \forall 1\le j \le N$, is such that $Wx = 0$, thus if the statement is true we get $\xi_j \mu_j = 0, \forall 1\le j \le N$, and since $\xi_j > 1$, also $\mu_j = 0,\forall 1 \le j \le N$. By \eqref{contradiction2}, this is sufficient to reach a contradiction. Therefore, we only need to show that $Wx = 0 \Rightarrow x = 0$. This proof will be given in Lemma $\ref{W}$.
\end{proof}

Before giving the proof of the final Lemma, let us make some examples of possible matrices $W$ arising from the previous construction. For the sake of illustration, let us take $N$ to be as small as possible, i.e. $N = 4$.

\begin{ex}\label{e:1}
Consider the case in which $C = 2$. This corresponds, for instance, to the case $m = 2$. Then, by Proposition \ref{RANK} and \eqref{S}, the only possible form of $W$ is
\[
W =
\left(
\begin{array}{c}
\begin{matrix}
    1 & 1 & 0 & 0\\
0 & 1 & 1 & 0 \\
0 & 0 & 1& 1\\
\mu & 0 & 0 & 1
\end{matrix}
\end{array}
\right),
\; Wx =
\left(
\begin{array}{cc}
x_1 + x_2\\
x_2 + x_3\\
x_3 + x_4\\
\mu x_1 + x_4
\end{array}
\right) = 0.
\]
Let $W_i$ be the $i$-th row of $W$. We notice that for $i = 1,2,3$, $W_{i + 1}$ differs from $W_i$ by exactly two elements, while $W_4$ does not differ with $W_1$ by only two elements. It does, though, with $\mu W_1$. Hence we rewrite equivalently the system $Wx = 0$ as $W_i - W_{i + 1}$, $W_4 - \mu W_1$:
\[
0 = \left(\begin{array}{cc}
x_1 - x_3\\
x_2 - x_4\\
x_3 - \mu x_1\\
x_4 - \mu x_2
\end{array}
\right), \text{ i.e. }
x_i = a_i x_{h(i)}, a_i = 
\begin{cases}
1, &\text{ if } h(i) > i,\\
\mu, &\text{ if } h(i) \le i,
\end{cases} 
\]
For a function $h: \{1,\dots, 4\} \to \{1,\dots, 4\}$. Since $\mu > 1$, this immediately implies $x_i = 0, \forall i$.
\end{ex}

\begin{ex}\label{e:2}
Consider the case in which $C = 4$, corresponding to $n_1,n_2,n_3,n_4$ linearly independent. Then,
\[
W =
\left(
\begin{array}{cc}
\begin{matrix}
    1 & 1 & 1 & 1\\
\mu & 1 & 1 & 1 \\
\mu & \mu & 1& 1\\
\mu & \mu & \mu & 1
\end{matrix}
\end{array}
\right),
\; Wx =
\left(
\begin{array}{cc}
x_1 + x_2 + x_3 + x_4\\
\mu x_1 + x_2 + x_3 + x_4\\
\mu x_1 + \mu x_2 + x_3 + x_4\\
\mu x_1 + \mu x_2 + \mu x_3 + x_4
\end{array}
\right) = 0.
\]
As in the previous example, for $i = 1,2,3$, $W_{i + 1}$ differs from $W_i$ by exactly one element, while $W_4$ does the same with $\mu W_1$. Thus as before we rewrite equivalently the system $Wx = 0$ as $W_i - W_{i + 1}$, $W_4 - \mu W_1$:
\[
0 = \left(\begin{array}{cc}
(\mu - 1)x_1\\
(\mu - 1)x_2\\
(\mu - 1)x_3\\
(\mu - 1)x_4
\end{array}
\right), \text{ i.e. }
x_i = a_i x_{h(i)}, a_i = 
\begin{cases}
1, &\text{ if } h(i) > i,\\
\mu, &\text{ if } h(i) \le i,
\end{cases} 
\]
In this case, $h(i) = i, \forall i \in \{1,\dots, 4\}$. Clearly also in this case $\mu > 1$, implies $x_i = 0, \forall i$.
\end{ex}

Finally, let us show a less symmetric example:

\begin{ex}\label{e:3}
Consider the case in which $C = 3$. Then, a possible matrix is:
\[
W =
\left(
\begin{array}{cc}
\begin{matrix}
    1 & 1 & 0 & 1\\
0 & 1 & 1 & 1 \\
\mu & 0 & 1& 1\\
\mu & \mu & 0 & 1
\end{matrix}
\end{array}
\right),
\; Wx =
\left(
\begin{array}{cc}
x_1 + x_2 + x_4\\
x_2 + x_3 + x_4\\
\mu x_1 + x_3 + x_4\\
\mu x_1 + \mu x_2 + x_4
\end{array}
\right) = 0.
\]

First, let us comment on the fact that this is a possible matrix appearing in the proof of the previous Theorem. Indeed, let us consider the first two lines:
\[
\left(
\begin{matrix}
1 & 1 & 0 & 1\\
0 & 1 & 1 & 1 \\
\end{matrix}
\right).
\]
The fact that $W_{13} = 0$ means that $n_3 \in \spn(n_1,n_2)$, since $3 \notin S_1$. On the other hand, Proposition \ref{RANK} ensures that $n_{3}$ is not a multiple of $n_2$, hence $n_3 \in S_2$, and $W_{23} = 1 \not = 0.$ For this reason, the matrix
\[
W =
\left(
\begin{array}{cc}
\begin{matrix}
    1 & 0 & 1 & 1\\
0 & 1 & 1 & 1 \\
\mu & 0 & 1& 1\\
\mu & \mu & 0 & 1
\end{matrix}
\end{array}
\right)
\]
would for instance have been non-admissible. Now, in order to prove $Wx = 0 \Rightarrow x = 0$, we work as in the previous examples, by noticing that for $i = 1,2,3$, $W_{i + 1}$ differs from $W_i$ by at most two elements, while $W_4$ must be compared with $\mu W_1$. Thus we write $W_i - W_{i + 1}$, $W_4 - \mu W_1$:
\[
0 = \left(\begin{array}{cc}
x_1 - x_3\\
x_2 - \mu x_1\\
x_3 - \mu x_2\\
(\mu - 1)x_4
\end{array}
\right), \text{ i.e. }
x_i = a_i x_{h(i)}, a_i = 
\begin{cases}
1, &\text{ if } h(i) > i,\\
\mu, &\text{ if } h(i) \le i.
\end{cases} 
\]
It is an elementary computation to show that $x_i = 0, \forall i$.
\end{ex}

Even though the examples we have given are too simple to appreciate the usefulness of the function $h$ such that $x_i = a_ix_{h(i)}$, this will be crucial in the proof of the Lemma.

\begin{lemma}\label{W}
Let $W$ be the matrix defined in the proof of Theorem \ref{t:pos}. Then, $\Ker(W) = \{0\}$.
\end{lemma}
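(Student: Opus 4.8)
The plan is to turn the linear system $Wx=0$ into an equivalent system of $N$ scalar relations of the special shape
\[
x_i = a_i\, x_{h(i)},\qquad i=1,\dots,N,
\]
where $h\colon\{1,\dots,N\}\to\{1,\dots,N\}$ is some function and $a_i\in\{1,\mu\}$ obeys the \emph{sign rule}
\[
a_i = 1 \ \text{ if } \ h(i)>i,\qquad a_i = \mu \ \text{ if } \ h(i)\le i,
\]
and then to derive $x=0$ from $\mu>1$ together with this rule. First I would record the elementary structure of $W$: the definition of $S_i',S_i''$ gives $W_{ii}=1$ for all $i$ (since $i\in\overline S_i$), $W_{ij}\in\{0,\mu\}$ for $j<i$, and $W_{ij}\in\{0,1\}$ for $j>i$. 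The substance is then a combinatorial comparison of consecutive rows.

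The key observation is that $\overline S_i$ is exactly the set of pivot positions produced by running the greedy linear-independence selection on the cyclically shifted word $n_i,n_{i+1},\dots,n_{i+N-1}$ (indices mod $N$, $n_s=n_{s-N}$), and that the word attached to $\overline S_{i+1}$ is obtained from the one attached to $\overline S_i$ by deleting the first letter $n_i$ and re-appending it at the end (as $n_{i+N}$). A short argument — deleting the first letter can only make later vectors ``more independent'' and leaves the total span unchanged — shows that
\[
\overline S_{i+1} = \big(\overline S_i\setminus\{i\}\big)\cup\{k_i^\ast\},
\]
where $k_i^\ast$ is the first index $k$ with $i<k\le i+N$ such that $n_i\in\spn(n_{i+1},\dots,n_k)$ (with $k_i^\ast=i+N$ precisely when $n_i\notin\spn\{n_j:j\ne i\}$), and moreover the pivots strictly between $i$ and $k_i^\ast$, and strictly after $k_i^\ast$, are unchanged. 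Pushing this through the reduction $S_i=\{k\le N: k\in\overline S_i \text{ or } k+N\in\overline S_i\}$ and the split $S_i=S_i'\sqcup S_i''$ (coefficient $1$ on $S_i'$, coefficient $\mu$ on $S_i''$), a direct bookkeeping gives that $W_i-W_{i+1}$ is supported on at most the two coordinates $i$ and $h(i):=k_i^\ast \bmod N$: the entry at $i$ equals $1-\mu$ when $k_i^\ast=i+N$ (so $h(i)=i$) and $1$ otherwise, and the entry at $h(i)$ (when $h(i)\ne i$) equals $-1$ if $k_i^\ast\le N$ (equivalently $h(i)>i$) and $-\mu$ if $k_i^\ast>N$ (equivalently $h(i)<i$). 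Running the same computation with $i=N$ — where the ``word starting at $N+1$'' is the word starting at $1$ but all of whose pivots now sit in positions $>N$, so the associated row equals $\mu W_1$ (this is exactly why $M_{N+1}=\mu M_1$ in the construction) — shows that $W_N-\mu W_1$ has the same two-term shape, with $h(N)\le N$. Consequently, applying the row operations $R_i\mapsto R_i-R_{i+1}$ for $i<N$ and $R_N\mapsto R_N-\mu R_1$, whose matrix has determinant $1-\mu\ne0$ and is therefore invertible, transforms $Wx=0$ into exactly the system $x_i=a_ix_{h(i)}$, $i=1,\dots,N$, with $a_i$ obeying the sign rule.

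It remains to show this system has only the trivial solution. Consider the functional graph of $h$ on $\{1,\dots,N\}$. Every point eventually falls, under iteration of $h$, into a cycle $i_1\to i_2\to\dots\to i_\ell\to i_1$. Composing the relations around the cycle yields $x_{i_1}=(a_{i_1}a_{i_2}\cdots a_{i_\ell})\,x_{i_1}$; if $x_{i_1}\ne0$ then $a_{i_1}\cdots a_{i_\ell}=1$, and since every factor lies in $\{1,\mu\}$ with $\mu>1$ this forces $a_{i_r}=1$ for all $r$, i.e.\ $i_1<i_2<\dots<i_\ell<i_1$, which is impossible. Hence $x$ vanishes on every cycle. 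Finally, for an arbitrary $j$, iterate $h$ until reaching a cycle (finitely many steps), where $x=0$, and back-substitute along the orbit using $x_s=a_sx_{h(s)}$ to conclude $x_j=0$. Therefore $\Ker(W)=\{0\}$.

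I expect the main obstacle to be the second paragraph: establishing the ``single-swap'' behaviour of the greedy pivot sets $\overline S_i$ under a cyclic rotation of the word, and then carefully carrying this through the mod-$N$ reduction to the sets $S_i$ and the $S_i'/S_i''$ split so that the two surviving entries of $W_i-W_{i+1}$ land exactly on coordinates $i$ and $h(i)$ with the coefficients dictated by the sign rule (and likewise for the wrap-around operation $R_N\mapsto R_N-\mu R_1$). The remaining ingredients — the elementary shape of $W$, the invertibility of the row-operation matrix, and the concluding cycle argument — are routine.
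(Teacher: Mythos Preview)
Your proposal is correct and follows essentially the same approach as the paper: the paper also proves that $\overline S_i$ and $\overline S_{i+1}$ differ by the single swap $\{i\}\leftrightarrow\{I(i)\}$ (its Claim~1), deduces that $W_i-W_{i+1}$ (and $W_N-\mu W_1$) is supported on the two coordinates $i$ and $I'(i)$ with precisely the coefficients you describe (its Claim~2), and thereby obtains the relations $x_i=a_ix_{h(i)}$ with the same sign rule (its Claim~3). The only cosmetic differences are that the paper does not bother with the invertibility of the row-operation matrix (it only needs the forward implication), and its concluding step is phrased as an unbounded iteration---in every block of $N+1$ iterates at least one step is non-increasing, so the accumulated power of $\mu$ grows without bound---rather than your equivalent cycle argument.
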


\begin{proof}

Throughout the proof, we always consider a given vector $x \in \R^N$ such that $Wx = 0$. The proof, partially suggested by the previous examples, consists in the following steps. First, we show that the rows of $W$, $W_i$ and $W_{i + 1}$ (if $i = N$, we compare $W_N$ with $\mu W_1$) differ for at most two elements, and one of them is always $x_i$. This immediately yields the existence of a function $h: \{1,\dots, N\} \to \{1,\dots, N\}$ such that $x_i = a_ix_{h(i)}$. We will then use this and the crucial fact that $\mu > 1$ to conclude that $x_i = 0, \forall i$. Let us make the following claims, and see from them how to conclude the proof of the present Lemma. We will use freely the notation introduced at the end of the proof of Theorem \ref{t:pos}.
\\
\\
\indent\fbox{Claim $1$:} Let $i \in \{1,\dots, N\}$. Then $\overline{S}_i$ differs from $\overline{S}_{i + 1}$ (if $i = N$, $\overline{S}_{i + 1}= \overline{S}_1$) of at most two elements, in the sense that
\[
\overline{S}_i\Delta \overline{S}_{i + 1}\doteq  \overline{S}_i\setminus \overline{S}_{i + 1} \cup \overline{S}_{i+1}\setminus \overline{S}_{i} 
\]
contains at most 2 elements. Moreover, if $\overline{S}_i \Delta \overline{S}_{i + 1} \neq \emptyset$, then $\overline{S}_i \Delta \overline{S}_{i + 1} = \{i,I(i)\}$, with $i \in \overline{S}_{i}\setminus \overline{S}_{i +1}$, and $I(i) \in \overline{S}_{i + 1}\setminus \overline{S}_i$.
\\
\\
\indent\fbox{Claim 2:} Let $i \in \{1,\dots, N - 1\}$. The couple of rows $W_i$,$W_{i + 1}$ and $\mu W_1$, $W_N$ differ at most by two elements, in the sense that if $W_i = (W_{i1},\dots,W_{iN})$ and $W_{i + 1} = (W_{(i+1)1},\dots,W_{(i+1)N})$, then there are at most two indexes $j_1,j_2$ such that $W_{ij_1}-W_{(i + 1)j_1} \neq 0$ and $W_{ij_2}-W_{(i + 1)j_2} \neq 0$ (and analogously for $\mu W_1$ and $W_N$).
\\
\\
Finally, with this claim at hand, we are going to prove
\\
\\
\indent\fbox{Claim 3:} There exists a function $h: \{1,\dots, N\}\to \{1,\dots, N\}$ and numbers $a_i, i \in \{1,\dots, N\}$, such that
\begin{equation}\label{INCrel}
x_i = a_ix_{h(i)}
\end{equation}
with the property
\[
a_i = 
\begin{cases}
1, &\text{ if } h(i) > i,\\
\mu, &\text{ if } h(i) \le i.
\end{cases} 
\]

Let us show how the proof of the Lemma follows from Claim 3, and postpone the proofs of the claims. Fix $i \in \{1,\dots, N\}$ and use \eqref{INCrel} recursively to find
\[
x_i = a_ia_{h(i)}\dots a_{h^{(n - 1)}(i)}x_{h^{(n)}(i)},
\] 
where $h^{(n)}$ denotes the function obtained by applying $h$ to itself $n$ times. We also use the notation $h^{(0)}$ to denote the identity function: $h^{(0)}(i) = i$, $\forall i \in \{1,\dots,N\}$. By the properties of $a_{j}$, we have, $\forall r \in \{0,\dots, n - 1\}$,
\[
a_{h^{(r)}(i)} = 
\begin{cases}
1, &\text{ if } h^{(r)}(i) > h^{(r - 1)}(i),\\
\mu, &\text{ if } h^{(r)}(i) \le h^{(r - 1)}(i).
\end{cases}
\]
Fix $k \in \N$, and let $r \in \{k + 1, \dots, k + N + 1\}$. Then, $ h^{(r)}(i) > h^{(r - 1)}(i)$ can occur at most $N$ times in this range, since otherwise we would find
\[
1 \le h^{(k)}(i) <  h^{(k+ 1 )}(i) < h^{(k + 2)}(i) < \dots < h^{(k + N + 1)}(i) \le N,
\]
and this is impossible since we would have $N + 1$ distinct elements in the set $\{1,\dots, N\}$. Now clearly this observation implies that for every fixed $l \in \N$, there exists $s \in \N$ such that
\[
x_i = \mu^tx_{h^{(s)}(i)}, \text{ for some $t \ge l$}.
\]
This can only happen if $x_i = 0$. Since $i$ is arbitrary, the conclusion follows.
\\
\\
Let us now turn to the proof of the claims. 
\\
\\
\indent \fbox{Proof of claim $1$:}
To prove the claim, we need to use the definition of $\overline{S}_i$. Let us recall the definition of $\overline{S}_i$, given in \eqref{S}. To build $\overline{S}_i$ what we do is consider the ordered set $\{n_i,n_{i + 1},\dots, n_{i + 1 - N}\}$ and select from it a basis of $\spn\{n_1,\dots, n_N\}$ starting from $n_i$ and then at the step $1 \le k \le N-1$ deciding whether to insert the vector $n_{i + k}$ in our collection based on the fact that it is linear dependent or not from the previous ones. Recall also that $S_i$ is the \emph{modulo $N$} version of $\overline{S}_i$, see \eqref{SNO}, and that we define $n_j \doteq n_{j - N}$, for $j \in \{N + 1,\dots, 2N\}$. Hence now fix $i \in \{1,\dots, N\}$ and consider $S_i$. If $S_i = \{1,\dots, N\}$, then $\#S_i = N$, thus $S_j = \{1,\dots, N\}, \forall 1\le j \le N$ and the claim holds. Otherwise, let $i + 1 < I = I(i) \le i + N -1$ be the first element in $(\overline{S_i})^c$. There are two cases:

\begin{enumerate}
\item $n_{I} \in \spn (n_i,\dots, n_{I - 1}) \setminus \spn(n_{i + 1},\dots, n_{I - 1})$;
\item $n_{I} \in \spn (n_{i + 1},\dots, n_{I - 1})$.
\end{enumerate} 

At the same time, consider what happens in $\overline{S}_{i + 1}$: the span in the $(i + 1)$-th case starts with one vector less than the one of the $i$-th case, simply because the collection of indexes in $\overline{S}_{i + 1}$ starts from $n_{i + 1}$. Hence, since $I$ is the first missing index in $\overline{S}_i$, $I$ is also the first possible missing index for $\overline{S}_{i + 1}$. Therefore, consider the first case $$n_{I} \in \spn (n_i,\dots, n_{I - 1}) \setminus \spn (n_{i + 1},\dots, n_{I - 1}).$$ This implies that $I \in \overline{S}_{i + 1}$. Moreover, we are now adding $n_I$ to the set of vectors $n_{i+1},\dots, n_{I - 1}$, and $n_{I} \in \spn(n_i,\dots, n_{I - 1})\setminus \spn(n_{i + 1},\dots, n_{I - 1})$, hence $n_I$ adds to the previous vectors the component relative to $n_i$, in the sense that
\[
\spn(n_{i + 1},\dots,n_I) = \spn(n_i,\dots, n_{I - 1}).
\]
This moreover implies that $j \in \overline{S}_i \Leftrightarrow j \in \overline{S}_{i + 1}$, $\forall I \le j < N + i - 1$. Since $n_i \in \spn(n_{i + 1},\dots,n_I)$, $i \notin \overline{S}_{i + 1}$. Thus $\overline{S}_i$ and $\overline{S}_{i + 1}$ differ by at most two elements, and we have $i \in \overline{S}_{i}\setminus \overline{S}_{i + 1}$ and $I = I(i) \in \overline{S}_{i + 1}\setminus \overline{S}_i$. This concludes the case $$n_{I} \in \spn (n_i,\dots, n_{I - 1}) \setminus \spn(n_{i + 1},\dots, n_{I - 1}).$$ If instead $n_{I} \in \spn (n_{i + 1},\dots, n_{I - 1})$, then we see that $I \notin \overline{S}_{i + 1}$, and we can iterate this reasoning from there, in the sense that we look for the next index $I'$ such that ${I'}\notin \overline{S}_{i}$ and divide again into the two cases above. Clearly, for the indexes $i + 1 \le j < I'_1$, we have $j \in \overline{S}_{i + 1}$ and $j \in \overline{S}_i$. Either this iteration enters in case $1$ of the previous subdivision for some element $I \notin \overline{S}_i$, or we conclude $\overline{S}_i = \overline{S}_{i + 1}$. This concludes the proof of the claim.
\\
\\
\indent \fbox{Proof of claim $2$:}
\\
\\
Note that nonzero elements of $W_i$ are found in positions corresponding to elements of $S_i$. Hence now fix $i \in \{1,\dots, N- 1\}$ and consider $W_i$ and $W_{i + 1}$. If $S_i = S_{i + 1}$, then $W_{ij} = 0 \Leftrightarrow W_{(i + 1)j} = 0$. Moreover, we introduce the \emph{modulo $N$} counterpart of the number $I(i)$ found in Claim $2$, i.e. $I'(i) = I(i)$ if $I(i) \in \{1,\dots,N\}$, and $I'(i) = I(i) - N$ if $I(i) \in \{N + 1,\dots, 2N\}$. Thus using the definition of $W$, we can deduce, if $S_i = S_{i + 1}$,
\begin{equation}\label{ZD}
\begin{cases}
W_{(i+ 1)j} = W_{ij} = 0, &\text{ if } j \notin S_i\\
W_{(i+ 1)j} = W_{ij} = \mu, &\text{ if } j \in S_i, j < i\\
W_{(i+ 1)j} = W_{ij} = 1, &\text{ if } j \in S_i, j > i\\
W_{(i + 1)i} = \mu, W_{ii} = 1, & \text{ otherwise},
\end{cases}
\end{equation}
 and the claim holds in this case. 
Finally, if $S_i \Delta S_{i +1} = \{i,I'(i)\}$, then:
\begin{equation}\label{DD}
\begin{cases}
W_{(i+ 1)j} = W_{ij} = 0, &\text{ if } j \notin S_i,j \neq I'(i)\\
W_{(i+ 1)j} = 1, W_{ij} = 0, &\text{ if } j = I'(i) > i + 1\\
W_{(i+ 1)j} = \mu, W_{ij} = 0, &\text{ if } j = I'(i) < i -1\\
W_{(i+ 1)j} = W_{ij} = \mu, &\text{ if } j \in S_i, j < i\\
W_{(i+ 1)j} = W_{ij} = 1, &\text{ if } j \in S_i, j > i\\
W_{(i + 1)i} =0, W_{ii} = 1, & \text{ otherwise}.
\end{cases}
\end{equation}
This concludes the proof of the claim if $i \in \{1,\dots, N-1\}$. If $i = N$, then we need to compare $W_N$ with $\mu W_1$, and we obtain two cases, in analogy with the previous situation:
\begin{equation}\label{ZDN}
\text{if } S_N\Delta S_{1} = \emptyset, \text{ then}:
\begin{cases}
\mu W_{1j} = W_{Nj} = 0, &\text{ if } j \notin S_N\\
\mu W_{1j} = W_{Nj} = \mu, &\text{ if } j \in S_N, j < N\\
\mu W_{1N} = \mu, W_{NN} = 1, & \text{ otherwise},
\end{cases}
\end{equation}
and
\begin{equation}\label{UDN}
\text{if } S_N\Delta S_{1} = \{N,I'(N)\}, \text{ then}:
\begin{cases}
\mu W_{1j} = W_{Nj} = 0, &\text{ if } j \notin S_N, j \neq I'(N)\\
\mu W_{1j} = \mu, W_{Nj} = 0, &\text{ if } j \notin S_N, j = I'(N)\\
\mu W_{1j} = W_{Nj} = \mu, &\text{ if } j \in S_N, j < N\\
\mu W_{1N} = 0, W_{NN} = 1, & \text{ otherwise}.
\end{cases}
\end{equation}
\\
\\
\indent\fbox{Proof of Claim $3$:} Fix $i \in \{1,\dots, N\}$. We want to consider the equations given by
\[
(W_{i + 1}-  W_{i},x) =0, \text{ if } i\in\{1,\dots, N-1\}, \text{ and } (W_{N} - \mu W_{1},x) = 0.
\]
If we consider $i \in \{1,\dots,N -1\}$, we see from \eqref{ZD} and \eqref{DD} that
\[
0 = (W_{i}-W_{i + 1},x) = \sum_{j = 1}^N(W_{ij} - W_{(i+1)j})x_j = 
\begin{cases}
(1 - \mu)x_i,& \text{ if } S_i\Delta S_{i - 1} = \emptyset\\
x_i - x_{I'(i)},&\text{ if } S_i\Delta S_{i - 1} =\{i,I'(i)\}, I'(i) > i + 1\\
x_i - \mu x_{I'(i)},& \text{ if } S_i\Delta S_{i - 1} =\{i,I'(i)\}, I'(i) < i - 1
\end{cases}
\]
and from \eqref{ZDN} and \eqref{UDN} we infer
\[
0 = (W_{N}-\mu W_{1},x) =
\begin{cases}
(1 - \mu)x_N,& \text{ if } S_N\Delta S_{1} = \emptyset\\
x_N - \mu x_{I'(N)},&\text{ if } S_N\Delta S_{1} =\{N,I'(N)\}.
\end{cases}
\]
From these equations we see that \eqref{INCrel} holds with the choice $h(i)\doteq I'(i)$, when $i$ is such that $S_i\Delta S_{i + 1} \neq \emptyset$, and $h(i)\doteq  i$ otherwise.

\end{proof}

\subsection{Proof of Corollary \ref{t:main}}

We end this section by showing that Theorem \ref{t:pos} implies Theorem \ref{t:main}. Assume by contradiction that there exists a family of matrices $$\{A_1,\dots,A_N\} \subset K_f$$  inducing a $T'_N$ configuration of the form $\eqref{mainform}$. We show that then there exists another $T'_N$ configuration $\{B_1,\dots, B_N\}$ such that $B_i \in K_F \subset C_F, \forall 1\le i \le N$ for some strictly polyconvex $F$ with
\[
F(X'_i) \ge 0,\; \forall 1\le i \le N,
\]
if
\[
B_i= \left(
\begin{array}{cc}
X_i' \\ 
Y_i' \\
Z_i' 
\end{array}
\right),\; \forall 1 \le i \le N.
\]
This is a contradiction with Theorem \ref{t:pos}. To accomplish this, it is is sufficient to define $F(X)\doteq f(X) - \min_{i}f(X_i)$. This function is clearly strictly polyconvex, since $f$ is. Moreover, we define
\[
X_i'\doteq X_i,\; Y_i' \doteq Y_i \text{ and } Z_i'\doteq Z_i + \min_{j}f(X_j)\id.
\]
In this way, $B_i$ is still a $T_N'$ configuration. Moreover, $B_i \in K_F, \; \forall 1\le i \le N$. To see this, it is sufficient to notice that, since $A_i \in K_f$,
\[
Y_i' = Y_i = Df(X_i) = DF(X_i'),\;\forall 1\le i \le N,
\]
and
\[
Z_i' = Z_i +\min_{i}f(X_i)\id = X_i^TY_i - f(X_i)\id + \min_if(X_i)\id = (X'_i)^TY'_i - F(X_i)\id.
\]
This finishes the proof.

\section{Sign-changing case: the counterexample}\label{SCC}

In this section, we construct a counterexample to regularity in the case in which the hypothesis of non-negativity on $f$ is dropped. Let us explain the strategy, that follows the one of \cite{LSP}. First of all, we consider the following equivalent formulation of the differential inclusion of div-curl type considered in the previous sections. Indeed, due to the fact that for $a \in \Lip(\R^2,\R^2)$,
\[
\dv(a) = \curl(aJ),
\]
if
\[
J = \left( 
\begin{array}{ll}
0 & -1\\
1 & 0
\end{array}\right)\,,
\]
one easily sees that \eqref{vargrb} holds if and only if
\[
\left\{
\begin{array}{ll}
\displaystyle \curl(Df(D u)J) = 0,\\ 
\displaystyle \curl(Du^TDf(Du)J - f(Du)J)= 0,
\end{array}\right.
\]
in the weak sense. Since $\Omega$ is convex, the latter allows us to say that \eqref{vargrb} holds for $u \in \Lip(\Omega,\R^2)$ if and only if there exist $w_1,w_2:\Omega\to \R^2$ such that
\[
w\doteq \left(\begin{array}{c}u\\w_1\\w_2\end{array}\right)
\]
solves a.e. in $\Omega$:
\begin{equation}\label{curl}
Dw \in \tilde C_f \doteq \left\{C\in \R^{(2n + m)\times m}: C =
\left(
\begin{array}{cc}
X\\ 
\beta Df(X)J\\
\beta  X^TDf(X)J - \beta f(X)J
\end{array}
\right), \text{ for some $\beta$ > 0}\right\}.
\end{equation}
From now on, we will always use this reformulation of the problem. Let us also introduce
\[
\tilde C_f'\doteq  \left\{C\in \R^{(2n + m)\times m}: C =
\left(
\begin{array}{cc}
X\\ 
\beta Df(X)J
\end{array}
\right), \text{ for some $\beta$ > 0}\right\}.
\]
In order to construct the counterexample, we want to find a set of \emph{non-rigid} matrices $\{A_1,A_2,A_3,A_4,A_5\}$, $A_i \in \R^{6\times 2}, \forall i$, satisfying
\begin{equation}\label{Ai}
A_i = \left(
\begin{array}{cc}
X_i\\ 
Y_i\\
Z_i
\end{array}
\right) \in \tilde C_f.
\end{equation}
Roughly, non-rigidity means that there exists a non-affine solution of the problem
\[
Dw \in \{A_1,\dots, A_5\},
\]
see Lemma \ref{in-app}. The integrand $f$ is of the form
\begin{equation}\label{H}
f(X)= \varepsilon\mathcal{A}(X) + g(X,\det(X)),
\end{equation}
for some convex and smooth $g: \R^5 \to \R$ and
\begin{equation}\label{area}
\mathcal{A}(X) = a(X,\det(X)), \quad \text{ where } a(X,d) \doteq \sqrt{1 + \|X\|^2 + d^2},
\end{equation}
is the area function. As in \cite{LSP}, $f$ is not fixed from the beginning, but rather becomes another unknown of the problem. In particular, in order to find $f$, it is sufficient for the following condition to be fulfilled:
\begin{Cond}\label{c:1}
There exist $2\times 2$ matrices $\{X_1,\dots,X_5\}$, $\{Y_1,\dots, Y_5\}$, real numbers $c_1,\dots, c_5, d_1,\dots, d_5$ and positive integers $\beta_1,\dots, \beta_5$ such that for $Q_{ij}\doteq \displaystyle c_i - c_j + d_i\det(X_i - X_j) + \frac{1}{\beta_i}\langle X_i - X_j ,Y_iJ\rangle$, one has
\begin{equation}\label{prime-1}
Q_{ij} < 0, \forall i \neq j.
\end{equation}
\end{Cond}
If this condition is satisfied, then one has
\begin{equation}\label{prime}
\left(
\begin{array}{cc}
X_i\\ 
Y_i
\end{array}
\right) \in \tilde C'_f, \quad \text{i.e. } Y_i = \beta_iDf(X_i)J.
\end{equation}
The construction of $f$ is the content of Lemma \ref{l:1}. Moreover, we will be able to build $f$ in such a way that for some large $R > 0$, 
\begin{equation}\label{infty}
g(z) = M\sqrt{1 + \|z\|^2} - L = Ma(z) - L,\quad \forall z \in \R^5, \|z\| \ge R,
\end{equation}
and constants $M,L > 0$. The \emph{non-rigidity} of $A_1,\dots,A_5$ stems from the fact that we choose $\{X_1,\dots,X_5\}$ forming a \emph{large} $T_5$-configuration, in the terminology of \cite{FS}. Therefore we introduce:
\begin{Cond}\label{c:2}
$\{X_1,X_2,X_3,X_4, X_5\}$ form a large $T_5$ configuration, i.e. there exists at least three permutations $\sigma_1,\sigma_2,\sigma_3 : \{1,2,3,4,5\} \to \{1,2,3,4,5\}$ such that the ordered set $[X_{\sigma_i(1)},X_{\sigma_i(2)},\dots, X_{\sigma_i(5)}]$ is a $T_5$ configuration and moreover $\{C_{\sigma_1(i)},C_{\sigma_2(i)},C_{\sigma_3(i)}\}$ are linearly independent for every $i \in \{1,\dots,5\}$.
\end{Cond}

Once this condition is guaranteed, by \cite[Theorem 1.2]{FS}, we find a non-affine Lipschitz map $u:\Omega\subset \R^2 \to \R^2$ such that
\[
Du \in \{X_1,X_2,X_3,X_4,X_5\}
\]
almost everywhere in $\Omega$. Furthermore, we can choose $u$ with the property that for any subset $\mathcal{V}\subset \Omega$, $Du$ attains each of these matrices on a set of positive measure. This is proved in Lemma \ref{in-app}.
\\
\\
In order to find Lipschitz maps $w_1,w_2: \Omega \to \R^2$ such that $$w = \left(\begin{array}{c}u\\w_1\\w_2\end{array}\right): \Omega \to \R^6$$ satisfies
\[
Dw \in \tilde C_f\quad \text{a.e. in }\Omega,
\]
we simply consider $w_1 = Au + B$, $w_2 = Cu + D$, for suitable $2\times 2$ matrices $A,B,C,D$. We therefore get our last
\begin{Cond}\label{c:3}
$Y_i$ and $Z_i$ can be chosen of the form
\[
Y_i = AX_i + B, \quad Z_i = CX_i + D,
\]
and $Z_i = X_i^TY_i - \beta_ic_iJ$, where $c_i = f(X_i)$.
\end{Cond}
In subsection \eqref{COND}, we will give an explicit example of values such that the Conditions \ref{c:1}-\ref{c:2}-\ref{c:3} are fulfilled.
\\
\\
Once this is achieved, we need to extend the energy $\E_f$ to an energy defined on integral currents of dimension $2$ in $\R^4$. Some of the results we present in this section in our specific case can be easily generalized to more general polyconvex integrands. Therefore, we defer their proofs to Section \ref{genext}. 
\\
\\
In order to extend our polyconvex function $f$ to a \emph{geometric} functional, we first recall \eqref{H}, i.e. $$f(X) = \eps\mathcal{A}(X) + g(X,\det(X)),$$ for $g: \R^{5} \to \R$ convex and smooth, and introduce the convex function $h: \R^5 \to \R$:
\[
h(z)\doteq \varepsilon\sqrt{1 + \|z\|^2} + g(z).
\] 
We consider the \emph{perspective function} of $h$:
\begin{equation}\label{G}
G(z,t)\doteq y h\left(\frac{z}{y}\right), \quad \forall z \in \R^5, y >0.
\end{equation}
It is a standard result in convex analysis that $G$ is convex on $\R^5\times \R_+$ as soon as $h$ is convex on $\R^5$, compare \cite[Lemma 2]{DBM}. Property \eqref{infty} reads as
\begin{equation}\label{outball}
h(z) = (M+\eps)\sqrt{1 + \|z\|^2} - L,\quad \forall z\in B_R^c(0),
\end{equation}
therefore we also find that the \emph{recession} function of $G$ is 
\[
h^*(z) = \lim_{y \to 0^+}G(z,t) = M\|z\|,\quad \forall z \in \R^5.
\]
Hence, $G$ can be extended to the hyperplane $y = 0$ as
\[
G(z,0) \doteq h^*(z).
\]
In Lemma \ref{l:2}, we will prove that $G(z,t)$ admits a finite, positively 1-homogeneous convex extension $\mathcal{G}$ to the whole space $\R^6$. We are finally able to define an integrand on the space of 2-vectors of $\R^4$, $\Lambda_2(\R^4)$. For a more thorough introduction to $k$-vectors, see Subsection \ref{multa}. Recall that
\[
\Lambda_2(\R^4) = \spn\{v_1\wedge v_2: v_1,v_2 \in \R^4\}.
\]
A basis for $\Lambda_2(\R^4)$ is given by the six elements $e_i\wedge e_j, 1\le i < j\le 4$, where $e_1,e_2,e_3,e_4$ is the canonical basis of $\R^4$. Recall moreover that this vector space can be endowed with a scalar product that acts on simple vectors as
\[
\langle v_1\wedge v_2, w_1\wedge w_2\rangle\doteq \det\left(\begin{array}{cc}
(v_1,w_1) & (v_1,w_2)\\
(v_2,w_1) & (v_2,w_2)
\end{array}\right),
\]
where $(u,v)$ denotes as usual the standard scalar product of $\R^4$. The integrand
\[
\Psi: \Lambda_2(\R^4) \to \R,
\]
is thus defined as, for $\tau \in \Lambda_2(\R^4)$,
\begin{equation}\label{psi}
\Psi(\tau) \doteq \mathcal{G}(\langle \tau,e_3\wedge e_2\rangle,\langle \tau,e_4\wedge e_2\rangle,\langle \tau,e_1\wedge e_3\rangle,\langle \tau,e_1\wedge e_4\rangle,\langle \tau,e_3\wedge e_4\rangle,\langle \tau,e_1\wedge e_2\rangle).
\end{equation}
Consequently, we define an energy on $\mathcal{I}_2(\R^4)$ as
\[
\Sigma(T) \doteq \int_E\Psi(\vec T(z))\theta(z) d\mathcal{H}^2(z),
\]
if $T = \llbracket E, \vec T, \theta\rrbracket$. For the notation concerning rectifiable currents and graphs, we refer the reader to Subsection \ref{currents}. The energy defined in this way satisfies Almgren's ellipticity condition \eqref{UALM}, as we will prove in Lemma \ref{l:3}. Finally, in Lemma \ref{l:4}, we will prove that the current 
\begin{equation}\label{Tutheta}
T_{u,\theta} = \llbracket\Gamma_u,\vec\xi_u,\theta\rrbracket
\end{equation}
is stationary for the energy $\Sigma$. The definition of stationarity for geometric functionals is recalled in Section \ref{GM}. In \eqref{Tutheta}, $\Gamma_u$ is the graph of $u$, $\vec\xi_u$ is its orientation, see \eqref{orientinggraph}, and $\theta(y)$ is a multiplicity, defined as $\theta(x,u(x)) = \beta_i$ if $x \in \Omega$ is such that
\[
Dw(x) = \left(
\begin{array}{cc}
X_i\\
Y_i\\
Z_i\\
\end{array}
\right) = \left(\begin{array}{c}X_i\\ \beta_iDf(X_i)\\ \beta_iX_i^TDf(X_i)J - \beta_if(X_i)J\end{array}\right).
\]
This discussion constitutes the proof of the following:

\begin{theorem}
There exists a smooth and elliptic integrand $\Psi: \Lambda_2(\R^4)\to \R$ such that the associated energy $\Sigma$ admits a stationary point $T$ whose (integer) multiplicities are not constant. Moreover the rectifiable set supporting $T$ is given by a graph of a Lipschitz map $u: \Omega \to \R^2$ that fails to be $C^1$ in any open subset $\mathcal{V} \subset \Omega$.
\end{theorem}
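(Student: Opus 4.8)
The plan is to realize the curl-type differential inclusion \eqref{curl} for a carefully chosen five-element set of matrices and then lift the resulting planar solution to a stationary graph current, following the scheme of \cite{LSP} combined with the flexibility of large $T_N$ configurations from \cite{FS}.

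\emph{Step 1: the algebraic data.} First I would produce $2\times 2$ matrices $X_1,\dots,X_5$ forming a \emph{large} $T_5$ configuration in the sense of Condition \ref{c:2}, together with matrices $Y_1,\dots,Y_5$, real numbers $c_i,d_i$ and positive integers $\beta_i$ satisfying simultaneously Conditions \ref{c:1} and \ref{c:3}; explicit values are exhibited in Subsection~\ref{COND}. I expect the verification that all three conditions hold at these values to be the main obstacle, since the large $T_5$ requirement (three permutations each inducing a genuine $T_5$ configuration, with the associated rank-one directions $C_{\sigma_1(i)},C_{\sigma_2(i)},C_{\sigma_3(i)}$ linearly independent for every $i$) is rigid, and it must be matched against the strict sign constraints $Q_{ij}<0$ and the affine coupling $Y_i = AX_i+B$, $Z_i = CX_i+D$, $Z_i = X_i^TY_i - \beta_ic_iJ$.

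\emph{Step 2: building $f$ and the planar solution.} With Condition \ref{c:1} in force, Lemma \ref{l:1} produces a strictly polyconvex $f(X)=\eps\mathcal{A}(X)+g(X,\det X)$ with $Y_i=\beta_i Df(X_i)J$, where moreover $g$ can be arranged to agree with $M a(z)-L$ outside a large ball, cf.\ \eqref{infty}; together with Condition \ref{c:3} this gives $A_i=\left(\begin{smallmatrix}X_i\\ Y_i\\ Z_i\end{smallmatrix}\right)\in\tilde C_f$ for each $i$. On the geometric side, Condition \ref{c:2} and \cite[Theorem 1.2]{FS} (invoked as in Lemma \ref{in-app}) furnish a nowhere-affine, hence nowhere-$C^1$, Lipschitz map $u:\Omega\to\R^2$ with $Du\in\{X_1,\dots,X_5\}$ a.e.\ that attains every $X_i$ on a set of positive measure in any open $\mathcal V\subset\Omega$. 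Setting $w_1=Au+B$, $w_2=Cu+D$ and $w=(u,w_1,w_2)$ then yields $Dw\in\tilde C_f$ a.e., so by Lemma \ref{equiv} (in its curl reformulation) the pair $(u,\beta)$ solves \eqref{vargrb} for the piecewise constant multiplicity $\beta=\beta_i$ on $\{Du=X_i\}$.

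\emph{Step 3: extension to a geometric integrand and stationarity.} I would set $h(z)=\eps\sqrt{1+\|z\|^2}+g(z)$, form the perspective function $G(z,y)=y\,h(z/y)$, convex on $\R^5\times\R_+$ by \cite[Lemma 2]{DBM}, and extend it across $y=0$ by the recession function $h^*(z)=M\|z\|$; Lemma \ref{l:2} then provides a finite, positively $1$-homogeneous convex extension $\mathcal G$ of $G$ to all of $\R^6$, smooth away from the origin thanks to the area-type behaviour \eqref{outball} near $y=0$. Composing $\mathcal G$ with the linear map $\tau\mapsto(\langle\tau,e_3\wedge e_2\rangle,\dots,\langle\tau,e_1\wedge e_2\rangle)$ defines $\Psi$ as in \eqref{psi}, and Lemma \ref{l:3} checks that $\Psi$ meets Almgren's ellipticity condition \eqref{UALM}, which reduces to convexity of $\mathcal G$ together with the strict convexity inherited from the $\eps$-perturbation by the area integrand. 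Finally, Lemma \ref{l:4} shows that the graph current $T_{u,\theta}$ of \eqref{Tutheta}, with multiplicity $\theta(x,u(x))=\beta_i$ on $\{Du=X_i\}$, is stationary for $\Sigma$; this is the precise translation of the PDE system \eqref{vargrb} through the identity \eqref{ENF} and the discussion in Subsection \ref{GM}. Since the $\beta_i$ are not all equal, $T$ has non-constant (integer) multiplicity, and since $u$ is nowhere $C^1$ its graph is nowhere $C^1$ in any open $\mathcal V\subset\Omega$, which is exactly the asserted conclusion.
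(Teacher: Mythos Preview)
Your proposal is correct and follows essentially the same route as the paper: you invoke the explicit data of Subsection~\ref{COND} to satisfy Conditions~\ref{c:1}--\ref{c:3}, apply Lemmas~\ref{l:1} and \ref{in-app} to obtain $f$ and the nowhere-$C^1$ map $u$, build the full solution $w=(u,Au+B,Cu+D)$ of the curl inclusion, and then extend to a geometric integrand via Lemmas~\ref{l:2}--\ref{l:4} exactly as in the text. One small expository point: ``nowhere-affine, hence nowhere-$C^1$'' is not a valid implication on its own; the actual reason $u$ is nowhere $C^1$ is the property you state immediately after, namely that $Du$ attains each of the five distinct values $X_i$ on a set of positive measure in every open subset, which forces $Du$ to be discontinuous everywhere.
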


\begin{lemma}\label{l:1}
There exists a smooth function $f: \R^{2\times 2}\to \R$ of the form
\[
f(X)\doteq \eps\mathcal{A}(X) + g(X,\det(X))
\]
with $g:\R^5 \to \R$ convex and smooth, such that
\begin{enumerate}
\item \eqref{prime} is fulfilled;
\item $g(X) = M\mathcal{A}(X) - L$ for constants $M,L > 0$, if $\|X\| \ge R$.
\end{enumerate}
\end{lemma}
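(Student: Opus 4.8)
The plan is to adapt the construction of \cite[Lemma 3]{LSP} to the present setting, the only new features being the term $\eps\mathcal A$ and the bookkeeping through the minors map $\Phi$. Throughout we are given the data $X_i,Y_i,c_i,d_i,\beta_i$ of Condition \ref{c:1}, so that $Q_{ij}<0$ for all $i\neq j$, and we use freely the identification $\R^{2\times2}\times\R\cong\R^5$ under which $a(z)=\sqrt{1+\|z\|^2}$ and $\mathcal A=a\circ\Phi$.

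The first step is to reformulate the conditions on $f$ as a prescription of the $1$-jet of $g$ at the five points $z_i\doteq\Phi(X_i)=(X_i,\det X_i)\in\R^5$. Writing $f=(\eps a+g)\circ\Phi$ and recalling that for $2\times 2$ matrices $D_X\det X=\cof(X)^T$, one has $f(X_i)=(\eps a+g)(z_i)$ while the chain rule expresses $Df(X_i)$ through $D(\eps a+g)(z_i)$ and $\cof(X_i)$. Demanding $f(X_i)=c_i$ and $Df(X_i)=-\beta_i^{-1}Y_iJ$ — the latter being \eqref{prime}, using $J^{-1}=-J$ — fixes $g(z_i)$ and all but one scalar component of $Dg(z_i)$, and I would pin down the remaining freedom by prescribing the determinant component of $D(\eps a+g)(z_i)$ to be $d_i$. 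Denoting by $\hat L_i(z)\doteq g(z_i)+\langle Dg(z_i),z-z_i\rangle$ the affine function carrying this prescribed $1$-jet at $z_i$, a direct computation — using the $2\times 2$ expansion $\det(X_i-X_j)=\det X_j-\det X_i-\langle\cof(X_i)^T,X_j-X_i\rangle$ — shows that
\[
\hat L_i(z_j)-g(z_j)=Q_{ij}+\eps\big(a(z_j)-a(z_i)-\langle Da(z_i),z_j-z_i\rangle\big),
\]
and the bracket is $\ge0$ because $a$ is convex. Since $Q_{ij}<0$ and the finite data is fixed, for $\eps$ small enough (depending only on that data) we still obtain $\hat L_i(z_j)<g(z_j)$ for all $i\neq j$; equivalently, each affine piece $\hat L_i$ lies strictly below the prescribed value of $g$ at every other $z_j$.

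It then remains to produce a smooth convex $g:\R^5\to\R$ realizing these prescribed values and gradients at the $z_i$ and equal to an area-type function at infinity. For parameters $M,L,R>0$ and small $\delta>0$ I would take $g$ to be a smooth convex regularization of $\max\{\hat L_1,\dots,\hat L_5,\,Ma(z)-L\}$ of the usual kind, i.e. one that is convex whenever its arguments are and that reproduces any single argument wherever it exceeds all the others by more than $\delta$ (the regularized maximum, as in the multiplicity-one construction of \cite{LSP}). Choosing $M>\max_i|Dg(z_i)|$ forces the bowl $Ma(z)-L$ to dominate every $\hat L_i$ outside a large ball $B_R$; choosing $L$ large then makes this bowl strictly smaller than $g(z_i)$ at each $z_i$; and by the strict inequalities of the previous step $\hat L_i$ strictly dominates every $\hat L_j$ in a neighbourhood of $z_i$. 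Hence, for $\delta$ small enough, $g$ coincides with the affine function $\hat L_i$ near each $z_i$ — so that $g(z_i)$ and $Dg(z_i)$ are the prescribed ones, which yields \eqref{prime} and $f(X_i)=c_i$ — and coincides with $Ma(z)-L$ for $\|z\|\ge R$, which is conclusion (2) of the statement; convexity and smoothness of $g$, hence polyconvexity and smoothness of $f=\eps\mathcal A+g\circ\Phi$, are then immediate. The only points I expect to require genuine care are the sign and chain-rule bookkeeping identifying Condition \ref{c:1} with the strict supporting-plane inequalities for the $1$-jet of $g$, and the order of the parameter choices ($M\gg1$, then $L\gg1$, then $R\gg1$, then $\delta\ll1$, and finally $\eps\ll1$), together with the elementary observation that the set where two or more of $\hat L_1,\dots,\hat L_5,Ma(z)-L$ simultaneously attain the maximum is compact and disjoint both from the points $z_i$ and from $\{\|z\|\ge R\}$, so that the regularization alters neither the $1$-jets at the $z_i$ nor the behaviour at infinity.
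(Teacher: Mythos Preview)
Your proposal is correct and follows essentially the same route as the paper's proof: both subtract the $\eps\mathcal A$ contribution from the prescribed data, assemble affine pieces (your $\hat L_i$, the paper's $l_i$) carrying the resulting $1$-jet, verify via Condition~\ref{c:1} that each strictly supports only at its own $z_i$, and then glue with an area-type bowl $Ma-L$ through a smoothed maximum. The only cosmetic differences are that the paper performs the smoothing in two stages (first mollifying $\max_i l_i$, then taking a two-argument smooth max with $Ma-L$) while you regularize all six pieces at once, and that $\eps$ must be fixed at the outset---since the $1$-jet data of $g$ depend on it---so your closing remark placing ``finally $\eps\ll1$'' last in the parameter order should be amended.
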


\begin{proof}
We will follow roughly the strategy of \cite[Lemma 3]{LSP}. At first we construct the function $g$ in several steps. Let $\{(X_i, Y_i, Z_i, \beta_i)\}_{i=1}^5$ the set of admissible matrices. For $\varepsilon>0$ consider for each $i$ the perturbed values 
\begin{equation}
\begin{split}\label{perturval}
Y_i^\eps&\doteq Y_iJ - \varepsilon \beta_iD\mathcal{A}(X_i)J \\
	c_i^\varepsilon &\doteq c_i - \varepsilon \mathcal{A}(X_i)\\
	d_i^\varepsilon&\doteq d_i - \partial_ya(X_i, \det(X_i))
\end{split}
\end{equation}
where $a(X,d) = \sqrt{1+ |X|^2 + d^2}$ and $\mathcal{A}(X)= a(X,\det(X))$, as defined in \eqref{area}. 
Furthermore we introduce the perturbed matrix 
\[ Q_{ij}^\varepsilon\doteq c^\varepsilon_i - c^\varepsilon_j + d^\varepsilon_i\det(X_i - X_j) + \frac{1}{\beta_i}\langle X_i - X_j ,Y^\varepsilon_iJ\rangle. \]
Thanks to the strict inequality in \eqref{prime-1} we can fix $\varepsilon, \sigma>0$ such that $Q^\varepsilon_{ij}\le -\sigma <0$ for all $i,j$. Let us define the linear functions 
\[l_i(X,d)\doteq c^\eps_i - \frac{1}{\beta_i}\langle  Y^\eps_iJ, X-X_i\rangle + d_i \left( \langle\cof(X_i)^T, X_i - X\rangle + d - \det(X_i)\right)\,\]
and the convex function 
\[
g_1(X,d)\doteq \max_{1\le i \le 5}\, l_i(X,d).
\]
Note that $l_j(X_j,\det(X_j)) = c_j^\eps$ and 
\[ l_i(X_j, \det(X_j))= c^\eps_j + Q^\eps_{ij} < c^\eps_j\,. \]
Hence there is $\delta>0$ such that $l_i(X,d)< l_j(X,d)$ for all $(X,d) \in B_{\delta}(X_j,\det(X_j))$ for all $i\neq j$ which implies that $g_1 = l_j$ on $B_{\delta}(X_j,\det(X_j))$. Choosing a radial symmetric, non-negative smoothing kernel on $\R^5$, $\rho_\varepsilon$, $0< \varepsilon << \delta$ we have that $g_2\doteq \rho_\eps \star g_1$ satisfies
\begin{enumerate}
	\item $g_2$ is smooth and convex
	\item $g_2 = l_j$ in a neighbourhood of $(X_j, \det(X_j))$ for all $j \in \{1,\dots, 5\}$. 
	\item $|g_2(X,d)| \le C \|(1,X,d)\|$ for all $(X,d)$ for some $C>0$.
\end{enumerate}
We choose any $R> 2 \max_{1\le i \le 5} \{\|X_i\| + |\det(X_i)| \}$, and any $M>C$. Now we may choose $L>0$ such that 
\begin{equation}\label{minore}
F(X,d)\doteq M a(X,d) - L < g_2(X,d) \text{ on } B_R.
\end{equation}
Since $M>C$ we have that 
\begin{equation}\label{maggiore}
F(X,d) = M\|(1,X,d)\| - L > g_2(X,d)
\end{equation}
 for all $(X,d) \notin B_{R_2}$, for some $R_2 > R$. Now let us fix a smooth approximation of the $\max$ function, say 
\[
m(a,b)\doteq (\phi_\varepsilon\star\max)(a,b),
\] 
where $\phi_\eps$ is a radial symmetric, non-negative smoothing kernel in $\R^2$. Note that $m(a,b) = \max(a,b)$ outside a neighbourhood of $\{ a= b\}$. In particular if we choose $\varepsilon$ sufficiently small we can ensure that
\[g(X,d)\doteq m( F(X,d), g_2(X,d))\]
agrees with $g_2$ on $B_{\frac{2R}{3}}$ by \eqref{minore}, and that it agrees with $F(X,d)$ outside $B_{2R_2}$ by \eqref{maggiore}. 
It remains to check that $g(X,d)$ is still convex. 
First note that $\partial_am \ge 0$ and $\partial_bm\ge 0$ since $\partial_a\max = \mathbf{1}_{\{a>b\}} \ge 0$, $\partial_b\max =  \mathbf{1}_{\{b>a\}} \ge 0$. Now it is a direct computation on the Hessian to see that if $f_1, f_2 \in C^2(\R^N)$ are two convex functions and $\tilde{m} \in C^2(\R^2)$ is convex  with $\partial_a\tilde{m}(a,b), \partial_b\tilde{m}(a,b) \ge 0$, then the composition $k(x)\doteq \tilde{m}(f_1(x), f_2(x))$ is convex. Thus we conclude that $g$ is convex.
Let us summarize the properties of $g$ and the related polyconvex integrand $f_1(X)\doteq g(X, \det(X))$
\begin{enumerate}
	\item $g$ is a smooth, convex function;
	\item $g=M\, a - L$ outside a ball $B_{R_3}$
	\item $g = g_2$ on a ball $B_{R_0}$, that implies that $f_1(X_i)=c_i^\varepsilon$ and $\beta_i Df_1(X_i)J = Y_i^\varepsilon$ for all $i$. 
\end{enumerate}
In particular from the last conditions and \eqref{perturval} we conclude that $h(X,d)\doteq \varepsilon a(X,d) + g(X,d)$ is convex, $f(X)\doteq\varepsilon \mathcal{A}(X) + f_1(X)$ is smooth, polyconvex and satisfies the desired properties, in particular $f(X_i)=c_i$, $\beta_i Df(X_i)J= Y_i$ for all $i$ and $f = (\eps + M)\mathcal{A} - L$ outside a ball centered at $0$.
\end{proof}

\begin{lemma}\label{in-app}
Given a large $T_5$ configuration $\{X_1,\dots, X_5\} \subset \Sym(2)$, where $\Sym(2)$ is the space of symmetric matrices of $\R^{2\times 2}$, there exists a map $u \in \Lip(\Omega,\R^2)$ such that
\begin{equation}\label{bel}
Du \in \{X_1,\dots, X_5\}
\end{equation}
and such that for every open $\mathcal{V}\subset \Omega$,
\begin{equation}\label{osc}
|\{x \in \Omega: Du(x) = X_i\}\cap \mathcal{V}| > 0,\quad \forall i\in\{1,\dots, 5\}.
\end{equation}
\end{lemma}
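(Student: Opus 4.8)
The plan is to produce $u$ by convex integration, using \cite[Theorem 1.2]{FS} as the engine and upgrading its conclusion so that \eqref{osc} holds, by the standard Baire-category refinement. First I would set up the space of subsolutions. Write $K\doteq\{X_1,\dots,X_5\}$ and let $U\subset\R^{2\times 2}$ be the open set of ``good'' matrices produced from the large $T_5$ configuration in \cite{FS} (the union of the relevant open staircase neighbourhoods attached to the three permutations $\sigma_1,\sigma_2,\sigma_3$), so that $\bar U\supset K$ and every matrix of $U$ admits rank-one segments heading towards $K$ inside $U$. Fix an affine boundary datum $\bar v$ with $D\bar v\in U$ compatible with the configuration as in \cite{FS}, and put
\[
\mathcal{X}_0\doteq\{v\in\Lip(\Omega,\R^2):\ v\ \text{piecewise affine},\ v|_{\partial\Omega}=\bar v,\ Dv\in U\ \text{a.e.}\},
\]
with the $C^0(\bar\Omega,\R^2)$ metric, and let $\mathcal{X}$ be its closure. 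Then $\mathcal{X}$ is a nonempty complete metric space, and the map $v\mapsto Dv$ from $\mathcal{X}$ into $L^1(\Omega,\R^{2\times 2})$ is of Baire class one (this is the standard convex-integration framework, cf.\ \cite{SMVS,LSP}), so its set $\mathcal{C}$ of continuity points is residual in $\mathcal{X}$, and at any $v\in\mathcal{C}$ one has $Dv\in\bar U$ a.e.

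The ingredient I would quote from \cite{FS} is the quantitative, localized perturbation lemma: there is a geometric constant $c_0=c_0(X_1,\dots,X_5)>0$ such that, given $v\in\mathcal{X}_0$, a ball $B\subset\subset\Omega$ on which $Dv$ is constant with value in $U$, an index $i\in\{1,\dots,5\}$ and $\delta>0$, there exists $\tilde v\in\mathcal{X}_0$ with $\tilde v=v$ outside $B$, $\|\tilde v-v\|_{C^0}<\delta$, and $|\{x\in B:|D\tilde v(x)-X_i|<\delta\}|\ge c_0\,|B|$. This is exactly where the \emph{largeness} of the configuration is used — the three permutations giving $T_5$ configurations with $\{C_{\sigma_1(i)},C_{\sigma_2(i)},C_{\sigma_3(i)}\}$ linearly independent supply enough rank-one directions at each stage to iterate the staircase construction into every one of the five ``corners''. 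Extracting precisely this ``definite fraction $c_0|B|$, for each $X_i$'' form from the iterated construction of \cite{FS} is the point I expect to require the most care, and is the genuine technical heart of the statement; everything else is bookkeeping.

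With the perturbation lemma in hand, the rest is the classical Baire argument. Fix a countable basis $\{B_j\}_{j\in\N}$ of $\Omega$ made of balls, and for $j,k\in\N$, $i\in\{1,\dots,5\}$ set
\[
\mathcal{O}_{j,i,k}\doteq\Big\{v\in\mathcal{X}:\ \exists\,w\in\mathcal{X}_0,\ \|w-v\|_{C^0}<\tfrac1k,\ \big|\{x\in B_j:|Dw(x)-X_i|<\tfrac1k\}\big|\ge\tfrac{c_0}{2}\,|B_j|\Big\}.
\]
Each $\mathcal{O}_{j,i,k}$ is open in $\mathcal{X}$ (using the strict inequality $\|w-v\|_{C^0}<\tfrac1k$), and it is $C^0$-dense: given any $v\in\mathcal{X}$ approximate it by $w\in\mathcal{X}_0$, subdivide $B_j$ into small sub-balls on which $Dw$ is almost constant with value in $U$, and apply the perturbation lemma on a definite fraction of them. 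Hence $\mathcal{C}\cap\bigcap_{j,i,k}\mathcal{O}_{j,i,k}$ is residual, and for any $v$ in it one checks, exactly as in the proof of \cite[Theorem 1.2]{FS} (pushing $\bar U$ onto $K$), that $Dv\in K$ a.e., while membership in $\mathcal{O}_{j,i,k}$ for all $k$ forces $|\{x\in B_j:Dv(x)=X_i\}|\ge\tfrac{c_0}{2}|B_j|>0$ for every $j$ and $i$. Since $\{B_j\}$ is a basis of $\Omega$, \eqref{bel} and \eqref{osc} hold, and such $v$ is in particular non-affine (it takes five distinct values on sets of positive measure). Taking $u\doteq v$ finishes the argument; note moreover that \eqref{osc} immediately rules out $u\in C^1(\mathcal V)$ for any open $\mathcal V\subset\Omega$, since a continuous $Du$ valued in the finite set $K$ would be locally constant. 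Finally, let me remark that the symmetry $X_i\in\Sym(2)$ plays no role in this lemma; it is recorded only because the explicit configuration used in the construction below happens to lie in $\Sym(2)$, and the remaining points above — completeness of $\mathcal{X}$, the Baire-class-one property of $v\mapsto Dv$, and openness of the $\mathcal{O}_{j,i,k}$ — are entirely standard.
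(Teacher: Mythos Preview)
Your approach is correct and genuinely different from the paper's. The paper's proof is essentially a string of citations: it invokes the in-approximation of \cite[Theorem 2.8]{FS} to get a non-affine $u$ with $Du\in K$, then shows only that the essential oscillation of $Du$ is positive on every open set (via \cite{SMVS,LSP} or the Baire approach of \cite{KIRK} together with \cite[Lemma 7.4]{TR}), and finally deduces \eqref{osc} from the \emph{four-gradient rigidity theorem} of Chleb\'{\i}k--Kirchheim \cite{CMKB}: if $Du$ attained only four of the $X_i$ on some open set, rigidity would force $Du$ to be constant there, contradicting the oscillation. So the paper never proves \eqref{osc} directly; it reduces it to an external rigidity result.

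You instead bake the conclusion into the Baire construction by introducing the quantitative sets $\mathcal O_{j,i,k}$ and the ``definite fraction $c_0|B|$ near $X_i$'' perturbation lemma. This avoids the rigidity theorem entirely and yields the stronger lower bound $|\{Du=X_i\}\cap B_j|\ge \tfrac{c_0}{2}|B_j|$. The price is that your perturbation lemma is not stated in \cite{FS} in that form; you correctly flag that extracting the uniform fraction $c_0$ (valid for \emph{each} index $i$, from \emph{any} starting matrix in $U$) from the iterated staircase construction is the real work, and it does require the largeness hypothesis in an essential way. The paper's route is shorter because four-gradient rigidity is a black box; yours is more self-contained and more informative, but you should be explicit that the step from $v\in\bigcap_k\mathcal O_{j,i,k}$ to $|\{Dv=X_i\}\cap B_j|\ge\tfrac{c_0}{2}|B_j|$ uses both $v\in\mathcal C$ (so $Dw_k\to Dv$ in $L^1$) and the already-established fact $Dv\in K$ a.e.\ (to upgrade ``$|Dv-X_i|$ small'' to ``$Dv=X_i$'' once $1/k<\min_{j\neq i}|X_j-X_i|$).
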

\begin{proof}
This statement is well-known, so we will only sketch its proof and give references where to find the relevant results. As shown in \cite[Theorem 2.8]{FS}, if $K\doteq \{X_1,\dots, X_5\}$ forms a large $T_5$ configuration, then there exists an in-approximation of $K$ inside $\Sym(2)$. This means, compare \cite[Definition 1.3]{FS}, that there exists a sequence of sets $\{U_k\}_{k \in \N}$, open relatively to $\Sym(2)$, such that
\begin{itemize}
\item $\sup_{X \in U_k}\dist(X,K) \to 0$ as $k \to \infty$;
\item $U_k\subset U_{k + 1}^{rc}, \forall k \in \N$.
\end{itemize}
For a compact $C \subset \R^{2\times 2}$, the rank-one convex hull is defined as
\[
C^{rc} \doteq \{P \in \R^{2\times 2}: f(P) \le 0, \forall f \text{ rank-one convex such that } \sup_{X \in C}f(X) \le 0\},
\]
where $f: \R^{2\times 2} \to \R$ is said to be rank-one convex if
\[
f(tA + (1-t)B) \le tf(A) + (1-t)f(B),\quad \forall A,B \in \R^{2\times 2}, \det(A - B) = 0, t \in[0,1].
\]
For an open set $U \subset \R^{2\times 2}$, 
\[
U^{rc} \doteq \bigcup_{C \subset U, C \text{ compact }}C^{rc}.
\]
In this way, if $U$ is open, then $U^{rc}$ is open as well. The existence of a in-approximation for $K$ implies the existence of a non-affine map $u$ such that $Du \in \{X_1,\dots, X_5\}$, hence $\eqref{bel}$. This is proved in \cite[Theorem 1.1]{FS}. To show \eqref{osc}, there are two ways. Either, one can use the same proof of \cite[Theorem 4.1]{SMVS} or \cite[Proposition 2]{LSP} to show that the essential oscillation of $Du$ is positive on any open subset of $\Omega$. Since there is rigidity for the four gradient problem, see \cite{CMKB}, this implies \eqref{osc}. Another way to show \eqref{osc} is to use the Baire Theorem approach of convex integration as introduced by Kirchheim in \cite{KIRK}. In particular, in \cite[Corollary 4.15]{KIRK}, it is proved the following. Define
\[
\mathcal{U}\doteq \bigcup_{k \in \N}U^{rc}_k,
\]
we fix $A \in \mathcal{U}$, and we also set
\[
\mathcal{P} \doteq \{v \in \Lip(\Omega,\R^2): Dv \in \mathcal{U}, v \text{ piecewise affine}, v|_{\partial \Omega} = A\},
\]
then the typical (in the sense of Baire) map 
\[
u \in \mathcal{P}^{\|\cdot\|_\infty}
\]
has the property that $Du \in K$. Then, we can use \cite[Lemma 7.4]{TR} to show that actually the typical map is non-affine on any open set, hence again by the rigidity for the four gradient problem, we conclude \eqref{osc}.
\end{proof}

\begin{lemma}\label{l:2}
Let $G: \R^{5}\times \R_{\ge 0}\to\R$ be the convex function defined in $\eqref{G}$. Then, there exists a positively $1$-homogeneous, convex function $\mathcal{G} \in C^\infty(\R^6\setminus\{0\})\cap \Lip(\R^6)$ such that
\[
\mathcal{G}(z,t) = G(z,t),
\]
if $z \in \R^5,t \in \R_+$.
\end{lemma}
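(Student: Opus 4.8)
The idea is to build the extension by a single explicit formula on the half space $\{t<0\}$, exploiting the fact recorded in \eqref{outball} — that outside a ball the convex function $h$ is merely an affine modification of the $5$-dimensional area integrand. First I would fix the model function
\[
N(z,t)\doteq (M+\eps)\sqrt{t^2+\|z\|^2}-Lt,\qquad (z,t)\in\R^6 .
\]
Since $(z,t)\mapsto(M+\eps)\sqrt{t^2+\|z\|^2}$ is a positive multiple of the Euclidean norm on $\R^6$ and $-Lt$ is linear, $N$ is convex, positively $1$-homogeneous and smooth on $\R^6\setminus\{0\}$, with no work. The only computation to record is that $G$ and $N$ \emph{already agree near the hyperplane} $\{t=0\}$: if $t>0$ and $\|z\|\ge Rt$ then $\|z/t\|\ge R$, so \eqref{outball} gives
\[
G(z,t)=t\,h(z/t)=t\Big((M+\eps)\sqrt{1+\|z\|^2/t^2}-L\Big)=(M+\eps)\sqrt{t^2+\|z\|^2}-Lt=N(z,t),
\]
and letting $t\to 0^+$ identifies the value of $G$ on $\{t=0\}$ (its recession function) with $N(\cdot,0)=(M+\eps)\|\cdot\|$. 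Thus $G\equiv N$ on the open cone $\mathcal C\doteq\{\|z\|>R|t|\}$, an open neighbourhood of $\{t=0\}\setminus\{0\}$.

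Then I would simply declare
\[
\mathcal G(z,t)\doteq\begin{cases} G(z,t),& t\ge 0,\\[1mm] N(z,t),& t\le 0.\end{cases}
\]
This is well posed because the two branches coincide on $\{t=0\}$ by the previous step, and it clearly restricts to $G$ on $\R^5\times\R_+$. Positive $1$-homogeneity is inherited branch by branch. For $C^\infty$ away from the origin: on $\{t>0\}$ one has $\mathcal G=t\,h(z/t)$, smooth because $g$, hence $h$, is smooth; on $\{t<0\}$, $\mathcal G=N$ is smooth off $0$; and every point of $\{t=0\}\setminus\{0\}$ lies in the open set $\mathcal C$, on which $\mathcal G\equiv N$. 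Finally $\mathcal G$ is continuous on all of $\R^6$ (continuous on each closed half space, and the two agree on the common boundary), so by positive $1$-homogeneity it is bounded on $S^5$ and globally Lipschitz, the Lipschitz constant being $\max_{S^5}|\nabla\mathcal G|$, which is finite since $\nabla\mathcal G$ is $0$-homogeneous and continuous on $\R^6\setminus\{0\}$.

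What is left, and the only point that calls for a genuine argument, is convexity of the glued function. On $\{t\ge 0\}$, $\mathcal G=G$ is convex: $G$ is the perspective function of the convex $h$, hence convex on $\R^5\times\R_{>0}$ by \cite[Lemma 2]{DBM}, and its lower semicontinuous closure is convex on $\R^5\times\R_{\ge 0}$. On $\{t\le 0\}$, $\mathcal G=N$ is convex. Moreover $\mathcal G$ is $C^1$ across $\{t=0\}\setminus\{0\}$, since it coincides there with $N$ on the neighbourhood $\mathcal C$. From this I would conclude global convexity by a standard gluing argument: for any segment joining points on opposite sides of $\{t=0\}$ and crossing that hyperplane at a point different from $0$, the restriction of $\mathcal G$ to the segment is convex on each of the two subsegments and $C^1$ at the crossing, hence convex; this yields the midpoint inequality for all such pairs, and the exceptional pairs — those whose connecting segment passes through the origin — are recovered by continuity of $\mathcal G$, since they form a nowhere dense subset of $\R^6\times\R^6$. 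A continuous midpoint-convex function is convex, which finishes the proof.

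I expect the behaviour across the gluing hyperplane $\{t=0\}$ to be the main — and really only mild — obstacle; the construction above sidesteps it by arranging that $\mathcal G$ literally equals the globally convex, smooth, $1$-homogeneous model $N$ on a full neighbourhood of $\{t=0\}\setminus\{0\}$, so one never truly interpolates and all the required properties are inherited for free. As an alternative for the convexity (and $1$-homogeneity and Lipschitz bound) at one stroke, one may instead set $\mathcal G(z,t)\doteq\sup_{w\in\R^5}\big(\langle\nabla h(w),z\rangle+(h(w)-\langle w,\nabla h(w)\rangle)t\big)$, a supremum of a uniformly bounded family of linear functionals which equals $G$ for $t>0$ and $(M+\eps)\|z\|$ for $t=0$; one still invokes \eqref{outball} to obtain smoothness of this $\mathcal G$ on $\{t\le 0\}\setminus\{0\}$.
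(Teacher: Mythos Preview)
Your argument is correct and follows a genuinely different route from the paper. The paper's own proof is a two-line appeal to the abstract machinery of Section~\ref{genext}: it verifies that $h$ has property (P) and then invokes Proposition~\ref{genextprop}, which defines the extension as the supremum of all tangent planes of $G$, with smoothness on $\{t<0\}$ deduced afterwards via Corollary~\ref{repres}. Your approach instead writes down an explicit model $N(z,t)=(M+\eps)\sqrt{t^2+\|z\|^2}-Lt$ and glues it to $G$ along $\{t=0\}$, exploiting \eqref{outball} to see that the two already agree on an open cone around the hyperplane. This is more elementary and self-contained: convexity, homogeneity and smoothness are all read off directly, and the only nontrivial step is the one-dimensional gluing lemma for convex functions with matching derivative at the interface (plus the density argument to handle segments through the origin), which you carry out correctly. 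Note also that your extension is in general \emph{not} the same function as the paper's sup-extension on $\{t<0\}$; the lemma only asks for existence, so this is harmless. The alternative you sketch at the end---taking the sup of the linear functionals $w\mapsto\langle\nabla h(w),z\rangle+(h(w)-\langle w,\nabla h(w)\rangle)t$---is precisely the construction of Proposition~\ref{genextprop}.
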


\begin{proof}
To prove the statement, it is sufficent to notice that the convexity of $h$ and \eqref{outball} tells us that $h$ has property (P), see the beginning of Section \ref{genext}, and therefore we can simply apply Proposition \ref{genextprop}. The smoothness is a consequence of the smoothness of $h$, property \eqref{outball} and Corollary \ref{repres}.
\end{proof}

\begin{lemma}\label{l:3}
The energy $\Sigma_\Psi$ satisfies the uniform Almgren ellipticity condition \eqref{UALM}.
\end{lemma}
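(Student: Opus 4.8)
The plan is to exploit the fact that, by construction, $f$ --- and hence $\Psi$ --- already contains a genuine multiple of the area integrand: this is precisely the reason for writing $f(X)=\eps\mathcal{A}(X)+g(X,\det X)$ and $h(z)=\eps\sqrt{1+\|z\|^2}+g(z)$ with $g$ convex. The first step is to record the resulting splitting of $\Psi$ on all of $\Lambda_2(\R^4)$. Computing the perspective function \eqref{G}, for $t>0$ one has
\[
G(z,t)= t\,h(z/t)=\eps\sqrt{\|z\|^2+t^2}+t\,g(z/t)=\eps\,|(z,t)|+G_g(z,t),
\]
where $G_g(z,t)\doteq t\,g(z/t)$ is the perspective of the convex function $g$, and $g$ has property (P) (cf.\ \eqref{infty} and Lemma~\ref{l:1}). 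Since $\eps\,|(z,t)|$ is itself a finite, convex, positively $1$-homogeneous function on $\R^6$ that already agrees with the first summand of $G$, the extension $\mathcal{G}$ furnished by Lemma~\ref{l:2} may be taken of the form $\mathcal{G}(z,t)=\eps\,|(z,t)|+\mathcal{G}_g(z,t)$, with $\mathcal{G}_g$ the positively $1$-homogeneous convex extension of $G_g$ produced by Proposition~\ref{genextprop} and Corollary~\ref{repres}. Composing with the coordinate map of \eqref{psi} --- a linear isometry of $\Lambda_2(\R^4)$ onto $\R^6$, since the six $2$-vectors listed there are, up to sign, an orthonormal basis --- this yields $\Psi(\tau)=\eps\,|\tau|+\Psi_g(\tau)$, where $|\tau|$ is the Euclidean norm on $\Lambda_2(\R^4)$ and $\Psi_g$ is convex and positively $1$-homogeneous on $\Lambda_2(\R^4)$.

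Next I would estimate the excess term by term. Let $D$ be a flat oriented $2$-disk $\llbracket\pi\cap B_r(x)\rrbracket$ with unit simple orienting vector $\vec\pi$, and let $S$ be an arbitrary competitor in the class appearing in \eqref{UALM}, i.e.\ an integral current with $\partial S=\partial D$ and $\spt S\subseteq\overline{B_r(x)}$. Since the orienting vector of an integral current is a unit simple $2$-vector $\|\cdot\|$-a.e., $\int|\vec S|\,d\|S\|=\mathbb{M}(S)$ and likewise for $D$, so the term $\eps|\tau|$ contributes exactly $\eps(\mathbb{M}(S)-\mathbb{M}(D))$ to $\Sigma_\Psi(S)-\Sigma_\Psi(D)$. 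For $\Psi_g$ I would run a calibration argument: choose a subgradient $\omega\in\partial\Psi_g(\vec\pi)\subset\Lambda^2(\R^4)$, so that $\langle\omega,\vec\pi\rangle=\Psi_g(\vec\pi)$ by positive $1$-homogeneity and $\langle\omega,\vec\xi\rangle\le\Psi_g(\vec\xi)$ for every simple $2$-vector $\vec\xi$ by convexity. Viewing $\omega$ as a constant, hence closed, $2$-form on $\R^4$ and using $\partial S=\partial D$ together with Stokes' theorem,
\[
\int\Psi_g(\vec S)\,d\|S\|\ \ge\ \int\langle\omega,\vec S\rangle\,d\|S\|\ =\ S(\omega)\ =\ D(\omega)\ =\ \int\langle\omega,\vec\pi\rangle\,d\|D\|\ =\ \int\Psi_g(\vec D)\,d\|D\|,
\]
so $\Psi_g$ contributes a nonnegative amount to the excess. (Alternatively, since $\Psi_g$ is convex and positively $1$-homogeneous on $\Lambda_2(\R^4)$, one may simply invoke the classical semi-ellipticity of such parametric integrands, cf.\ \cite{FED}.)

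Summing the two contributions gives $\Sigma_\Psi(S)-\Sigma_\Psi(D)\ge\eps(\mathbb{M}(S)-\mathbb{M}(D))$ for every admissible $S$, which is exactly \eqref{UALM} (the excess of $\Sigma_\Psi$ over any flat disk dominates a fixed multiple of the mass deficit) with ellipticity constant $c_0=\eps$; as $\eps$ is fixed and independent of $\pi$, $x$ and $r$, the bound is uniform. The one genuinely delicate point is the splitting in the first step --- that the positively $1$-homogeneous convex extension $\mathcal{G}$ of Lemma~\ref{l:2} does decompose as $\eps\,|(z,t)|+\mathcal{G}_g(z,t)$ with $\mathcal{G}_g$ still convex, i.e.\ that the extension procedure of Proposition~\ref{genextprop} is compatible with subtracting off the area part --- which is where the explicit representation of Corollary~\ref{repres} is needed; everything after that is a routine mass comparison together with the standard calibration inequality for convex integrands.
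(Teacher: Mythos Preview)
Your approach is essentially the paper's own: split off a fixed multiple of the mass integrand from $\Psi$, observe that the remainder is still induced by a convex positively $1$-homogeneous function on $\R^6$, and use that such integrands satisfy the (non-uniform) Almgren inequality. The paper carries this out by setting $\mathcal{G}_\eps\doteq\mathcal{G}-\tfrac{\eps}{2}\sqrt{t^2+\|z\|^2}$, asserting its convexity ``by construction'', and invoking Proposition~\ref{Alm} (a Jensen-inequality argument using $\int\vec R\,d\|R\|=\int\vec S\,d\|S\|$) to obtain \eqref{UALM} with constant $\eps/2$; you instead peel off the full $\eps$ by decomposing $h$ \emph{before} extending and then run a calibration argument for the remaining piece.

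The only substantive difference is how the splitting is justified. The paper works directly with the already-constructed $\mathcal{G}$ and only subtracts half of the area term, so that convexity of the remainder is truly ``by construction'' (one still has $\tfrac{\eps}{2}a$ inside to absorb the perspective). You subtract the entire $\eps|(z,t)|$, which forces you to argue that the extension procedure of Proposition~\ref{genextprop} commutes with the decomposition $h=\eps a+g$; you correctly flag this as the delicate point. Your calibration step (subgradient plus Stokes on the closed constant $2$-form) is exactly the content of Proposition~\ref{Alm} unpacked, so the two arguments for semi-ellipticity are equivalent.
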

\begin{proof}
By construction, it is immediate to see that also
\[
\mathcal{G}_\eps(z,t)\doteq \mathcal{G}(z,t) - \frac{\eps}{2}\sqrt{t^2 + \|z\|^2}
\]
is still convex and positively $1$-homogenous. Define $\Psi_\eps$ as in \eqref{psi} by substituting $\mathcal{G}_\eps$ to $\mathcal{G}$. By the general Proposition \ref{Alm}, we see that $\Sigma_{\Psi_\eps}$ satisfies Almgren condition, hence $\Sigma_\Psi$ satisfies \eqref{UALM} with constant $\frac{\eps}{2}$.
\end{proof}

\begin{lemma}\label{l:4}
The current $T_{u,\theta} = \llbracket\Gamma_u,\vec\xi_u,\theta\rrbracket$ defined in \eqref{Tutheta} is stationary in $\Omega\times \R^2$ for the energy $\Sigma_\Psi$.
\end{lemma}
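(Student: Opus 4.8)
The plan is to obtain stationarity by reversing, for our explicit competitor, the chain of equivalences set up in this section. Stationarity of $T_{u,\theta}$ for $\Sigma_\Psi$ is equivalent to the pair $(u,\beta)$, with $\beta(x)\doteq\theta(x,u(x))$, solving the weighted outer/inner system \eqref{vargrb}; and, by the $J$-reformulation recalled at the beginning of Section \ref{SCC} (which uses that $\Omega$ is convex), the latter holds if and only if there are Lipschitz maps $w_1,w_2\colon\Omega\to\R^2$ for which $w=(u,w_1,w_2)$ satisfies $Dw\in\tilde C_f$ a.e., the pointwise weight being exactly $\beta$. Since Conditions \ref{c:1}--\ref{c:3} and Lemma \ref{l:1} will already have produced such $w_1,w_2$, the lemma follows. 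Thus the proof has two parts: (i) the algebraic check that the chosen $w$ solves the inclusion with weight $\theta\circ v$, and (ii) the identification, via Subsection \ref{GM}, of current stationarity with \eqref{vargrb}. Note that only $Du\in\{X_1,\dots,X_5\}$ a.e.\ is used here; the finer properties of $u$ from Lemma \ref{in-app} are irrelevant for this particular lemma.

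For (i) I would set $w_1(x)\doteq Au(x)+Bx$ and $w_2(x)\doteq Cu(x)+Dx$ with $A,B,C,D$ the $2\times2$ matrices furnished by Condition \ref{c:3}, so that $Dw_1=A\,Du+B$ and $Dw_2=C\,Du+D$. On each set $\{Du=X_i\}$ (these cover $\Omega$ up to a null set) one then gets
\[
Dw=\begin{pmatrix}Du\\ A\,Du+B\\ C\,Du+D\end{pmatrix}=\begin{pmatrix}X_i\\ Y_i\\ Z_i\end{pmatrix}=A_i\qquad\text{on }\{Du=X_i\},
\]
using $Y_i=AX_i+B$ and $Z_i=CX_i+D$. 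To see $A_i\in\tilde C_f$, recall that $f$ has been constructed in Lemma \ref{l:1} so that $\beta_iDf(X_i)J=Y_i$ and $f(X_i)=c_i$, while Condition \ref{c:3} gives $Z_i=X_i^TY_i-\beta_ic_iJ$; substituting the first relations into the last shows $Y_i=\beta_iDf(X_i)J$ and $Z_i=\beta_iX_i^TDf(X_i)J-\beta_if(X_i)J$, i.e.\ $A_i\in\tilde C_f$ with weight $\beta_i$. Since the $A_i$ are distinct, the weight read off $\tilde C_f$ at a point of $\{Du=X_i\}$ is forced to be $\beta_i$, which is precisely the value of $\theta$ on the graph there; hence $\beta=\theta\circ v$ and $(u,\beta)$ solves \eqref{vargrb}.

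Part (ii) is the only non-bookkeeping step, and it is the geometric heart of the matter: one needs that for a graph current $T_{u,\theta}$ over a Lipschitz map, stationarity for $\Sigma_\Psi$ is equivalent to the weighted variation equations \eqref{vargrb}. This is the multiplicity-weighted analogue of \cite[Prop.~6.8]{DLDPKT}, carried out in Subsection \ref{GM}: splitting a compactly supported ambient vector field on $\Omega\times\R^2$ into horizontal and vertical components yields exactly the inner and the outer variations, with $\beta$ entering as the (nonconstant) multiplicity, and no further variations arise. The point that makes this applicable is that $\Psi$ from \eqref{psi} is the canonical extension of $f$: the un-normalized graph $2$-vector $\partial_1v\wedge\partial_2v$ has $\langle\,\cdot\,,e_1\wedge e_2\rangle=1$, so on it $\mathcal{G}$ reduces to $h=\eps\sqrt{1+\|\cdot\|^2}+g$ evaluated at the minors of $Du$, giving $\Psi(\partial_1v\wedge\partial_2v)=f(Du)$; by positive $1$-homogeneity of $\Psi$ and the area formula this yields $\Sigma_\Psi(T_{u,\theta})=\int_\Omega f(Du)\,\beta\,dx$, so the first-variation equations of $\Sigma_\Psi$ restricted to graphs are literally \eqref{vargrb}. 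Combining (i), (ii) and the $J$-reformulation proves the lemma; since moreover the $\beta_i$ are positive integers, $T_{u,\theta}$ is an integral current, as needed for the Theorem.
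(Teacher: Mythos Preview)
Your proposal is correct and follows essentially the same route as the paper. The paper's proof is terser: it checks the identity $f(X)=\Psi(W(X))\mathcal{A}(X)$ (your $\Psi(\partial_1v\wedge\partial_2v)=f(Du)$ observation, rephrased via homogeneity) and then invokes Proposition~\ref{vargen}, the multiplicity-weighted analogue of \cite[Prop.~6.8]{DLDPKT}, leaving the verification that $(u,\beta)$ actually solves \eqref{vargrb} implicit in the setup of Section~\ref{SCC}; you make that verification (your part~(i)) explicit, but the logical structure and the key ingredient (part~(ii)) are identical.
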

\begin{proof}
A direct computation shows that $f$ and $\Psi$ fulfill
\[
f(X) = \Psi(W(X))\mathcal{A}(X), \forall X \in \R^{2\times 2},
\]
where $W(X) = M^1(X)\wedge M^2(X)$ and $M^i$ are the columns of the matrix
\[
M(X)\doteq
\left(
\begin{array}{c}
\id_m\\
X
\end{array}
\right).
\]
Once this is checked, the proof is entirely analogous to the one of \cite[Proposition 6.8]{DLDPKT}, and will be sketched in the appendix, see Proposition \ref{vargen}.
\end{proof}

\subsection{Explicit values}\label{COND}
Define the following quantities:
\begin{align*}
&(\beta_1,\beta_2,\beta_3,\beta_4,\beta_5) \doteq (2,5,10,1,2);\\
&(d_1,d_2,d_3,d_4,d_5) \doteq  \left(-\frac{1204}{828115},0,\frac{-1309}{454800},\frac{-10097}{2546880},0\right);\\
&(c_1,c_2,c_3,c_4,c_5) \doteq \left(0,0,-\frac{2929}{1137000},\frac{5233}{113700}, -\frac{33}{15160}\right).
\end{align*}
The large $T_5$ configuration is given by:
\[
A_1\doteq
\left(
\begin{array}{cc}
\frac{8}{5}& -2\\[6pt]
-2 & \frac{8}{5}\\[6pt]
 -\frac{8}{1137} & \frac{7361}{454800}\\[6pt]
\frac{267}{151600}& \frac{8}{1137}\\[6pt]
\frac{-3361}{227400}& \frac{3361}{284250}\\[6pt]
\frac{4801}{284250} & -\frac{4801}{227400}
\end{array}
\right);
A_2\doteq
\left(
\begin{array}{cc}
\frac{8}{5}& 2\\[6pt]
2 & \frac{8}{5}\\[6pt]
 \frac{8}{1137} & \frac{7361}{454800}\\[6pt]
\frac{267}{151600}& -\frac{8}{1137}\\[6pt]
\frac{3361}{227400}& \frac{3361}{284250}\\[6pt]
\frac{4801}{284250} & \frac{4801}{227400}
\end{array}
\right);
A_3\doteq
\left(
\begin{array}{cc}
\frac{2}{5}& 0\\[6pt]
0 & -\frac{18}{5}\\[6pt]
 0 & -\frac{959}{454800}\\[6pt]
\frac{907}{151600}& 0\\[6pt]
0& -\frac{10083}{379000}\\[6pt]
\frac{4801}{1137000} & 0
\end{array}
\right);
\]

\[
A_4\doteq
\left(
\begin{array}{cc}
-\frac{18}{5}& 0\\[6pt]
0 & \frac{2}{5}\\[6pt]
 0 & \frac{5441}{454800}\\[6pt]
\frac{9121}{454800}& 0\\[6pt]
0& \frac{3361}{1137000}\\[6pt]
-\frac{14403}{379000} & 0
\end{array}
\right);
A_5\doteq
\left(
\begin{array}{cc}
\frac{3}{4}& 0\\[6pt]
0 & \frac{3}{4}\\[6pt]
0 & \frac{6001}{454800}\\[6pt]
\frac{2161}{454800}& 0\\[6pt]
0& \frac{3361}{606400}\\[6pt]
\frac{4801}{606400} & 0
\end{array}
\right).
\]
Define $X_i, Y_i,Z_i \in \R^{2\times 2}$ through the relations
\[
\left(
\begin{array}{cc}
X_i\\
Y_i\\
Z_i\\
\end{array}
\right)= A_i.
\]
The matrices $A,B,C,D$ appearing in Condition \ref{c:3} are given by:
\[
A\doteq
\left(
\begin{array}{cc}
0&  \frac{4}{1137}\\
- \frac{4}{1137} & 0
\end{array}
\right);
B\doteq
\left(
\begin{array}{cc}
0&  \frac{4801}{454800}\\
\frac{3361}{454800} & 0
\end{array}
\right);
C\doteq
\left(
\begin{array}{cc}
0&  \frac{3361}{454800}\\
\frac{4801}{454800} & 0
\end{array}
\right), D\doteq 0.
\]
These values fulfill Conditions \ref{c:1}, \ref{c:2}, \ref{c:3}. In particular, the three permutations in the definition of large $T_5$ configuration of Condition \ref{c:2} are: $[1,2,3,5,4]$, $[1,2,4,5,3]$, $[1,2,5,3,4]$.

\section{Extension of polyconvex functions}\label{genext}

Let $\Phi: \R^{n\times m} \to \R^{k}$ be the usual map that, to a matrix $X \in \R^{n\times m}$, associates the vector of the subdeterminants of $\Phi$. Consider a polyconvex function
\[
f(X) = h(\Phi(X)),
\]
$h: \R^k \to \R$ being\footnote{This hypothesis on the regularity of $h$ is not necessary, and one could simply consider $h \in \Lip(\R^k)$. Indeed, all the results of this section would work with simple modifications in the Lipschitz case. Nonetheless, we prefer to assume $C^1$ regularity in order to avoid further technicalities.} $C^1$. The purpose of this section is to generalize the arguments of the previous section to arbitrary $n,m$, and hence to prove some of the lemmas of that section. Consider the following set of assumptions
\begin{enumerate}[(i)]
\item\label{con} $h$ is convex;
\item\label{lin} $h$ has linear growth, i.e. $|h(z)| \le A\|z\| + B, \forall z \in \R^k$, for $A,B\ge 0$;
\item\label{lim} $\lambda \doteq \inf \{h(z) - (Dh(z),z): z\in \R^k\} > -\infty$;
\item\label{magg} $(Dh(z_2),z_2 - z_1) \le h(z_1) + h(z_2), \quad \forall z_1,z_2 \in \R^k$.
\end{enumerate}
If $h$ fulfills \eqref{con}-\eqref{lin}-\eqref{lim}, we will say it has property (P). If, in addition, $h$ satisfies \eqref{magg}, we will say that $h$ fulfills property (PE).
\begin{remark}\label{red}
Notice that \eqref{lim} is a consequence of \eqref{magg}, indeed if \eqref{magg} holds we can write, for $z_1 = 0$ and for any $z_2 = z \in \R^k$:
\[
(Dh(z),z) \le h(0) + h(z),
\]
hence
\[
-h(0) \le h(z) - (Dh(z),z), \quad \forall z \in \R^k,
\]
that implies \eqref{lim}.
\end{remark}
\noindent We denote with $h^*$ the \emph{recession function} of $h$:
\[
h^*(x)\doteq \lim_{y\to 0^+}yh\left(\frac{x}{y}\right), \quad \forall x \in \R^k.
\]
It is not difficult to prove that the limit above always exists and is finite for a function $h$ satisfying (P). To show it, one can use the fact that the function $$y \mapsto yh\left(\frac{x}{y}\right)$$ defined for $y > 0$ is convex for every fixed $x \in \R^k$, see \cite[Lemma 2]{DBM}.
\\
\\
As above, we define the \emph{perspective function}
\[
G(x,y)\doteq yh\left(\frac{x}{y}\right),\quad \text{if } y > 0.
\]
We consider the smallest convex extension of $G$ to the whole $\R^{k + 1}$:
\[
\mathcal{G}(z,t) \doteq \sup\{G(x,y) + (DG(x,y),(z,t) - (x,y)): (x,y) \in \R^{k}\times (0,+\infty)\}.
\]
By $1$-homogeneity of $G$,  we can write
\begin{equation}\label{GG}
\mathcal{G}(z,t) = \sup\{(DG(x,y),(z,t)): (x,y) \in \R^{k}\times (0,+\infty)\}
\end{equation}
First, we prove
\begin{prop}\label{genextprop}
Let $\mathcal{G}$ be defined as in \eqref{GG}. Then, if $h$ satisfies $(P)$, $\mathcal{G}$
\begin{enumerate}
\item\label{co} is convex and extends $G$ on $\R^k\times (0,+\infty)$;
\item\label{po} is positively 1-homogeneous;
\item\label{fo} is finite everywhere.
\end{enumerate}
Conversely, if there exists a function $\mathcal{G}$ that fulfills \eqref{co}-\eqref{po}-\eqref{fo}, then $h$ fulfills (P).
\end{prop}

Furthermore, we can prove the following characterization of $\mathcal{G}$:

\begin{corollary}\label{repres}
Let $h$ fulfill property (P), and let $\mathcal{G}$ be defined as in \eqref{GG}. Assume further that there exists $\lambda' \in \R$ and $R > 0$ such that 
\begin{equation}\label{recball}
h(z) = h^*(z) + \lambda',\quad \text{ for } \|z\|\ge R.
\end{equation}
 Then, $\lambda' = \lambda$ and for $t < 0$, we have
\[
\mathcal{G}(z,t) = h^*(z) + \lambda' t,
\]
where $\lambda$ is the quantity appearing in \eqref{lim}.
\end{corollary}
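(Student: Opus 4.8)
The plan is to turn the definition \eqref{GG} into an explicit supremum over $\R^k$ and then extract everything from two monotonicity properties of functions of one real variable. Writing $w = x/y$, a direct computation gives $DG(x,y) = \big(Dh(w),\, h(w) - (Dh(w),w)\big)$; since $(x,y)$ ranges over $\R^k\times(0,+\infty)$ exactly when $w$ ranges over $\R^k$, this yields
\[
\mathcal{G}(z,t) \;=\; \sup_{w\in\R^k}\Big\{ (Dh(w),z) + t\,\phi(w)\Big\},\qquad \phi(w)\doteq h(w) - (Dh(w),w),
\]
and $\phi \ge \lambda$ by \eqref{lim}.

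I would then record two facts coming only from convexity of $h$ (and its $C^1$ regularity). \textbf{(a)} For fixed $z$, the map $s\mapsto (Dh(sz),z)$ is nondecreasing on $(0,+\infty)$ by monotonicity of $Dh$, and bounded above by $h^*(z)$ (from $h(w+sz)\ge h(w)+s(Dh(w),z)$, dividing by $s$ and letting $s\to\infty$, using linear growth so that $h(w+sz)/s \to h^*(z)$ independently of $w$). Its limit $L\le h^*(z)$ then satisfies $L = \lim_s s^{-1}\big(h(z)+\int_1^s (Dh(rz),z)\,dr\big) = \lim_s h(sz)/s = h^*(z)$, so $(Dh(sz),z)\uparrow h^*(z)$; in particular $\sup_w (Dh(w),z) = h^*(z)$. \textbf{(b)} For fixed $w$, the map $s\mapsto \phi(sw)$ is nonincreasing on $(0,+\infty)$: with $\psi(s)\doteq h(sw)$, convex and $C^1$, one has $\phi(sw) = \psi(s) - s\psi'(s)$, and a Stieltjes integration by parts against the nondecreasing $\psi'$ gives $s_2\psi'(s_2) - s_1\psi'(s_1) \ge \psi(s_2)-\psi(s_1)$ for $0<s_1<s_2$, i.e. $\phi(s_1 w)\ge \phi(s_2 w)$.

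Next I would bring in hypothesis \eqref{recball}. Since $h = h^* + \lambda'$ on $\{\|z\|\ge R\}$ and $h^*$ is positively $1$-homogeneous, Euler's identity $(Dh^*(w),w) = h^*(w)$ (valid where $h^*$ is differentiable, i.e. for $\|w\|> R$, and extended to $\|w\|=R$ by continuity) gives $\phi(w) = h^*(w)+\lambda' - (Dh^*(w),w) = \lambda'$ for all $\|w\|\ge R$; hence $\lambda\le\lambda'$. For the reverse inequality, note that $\phi$ is continuous and, by fact (b), nonincreasing along every ray from the origin; combined with $\phi\equiv\lambda'$ on $\{\|\cdot\|\ge R\}$ this forces $\phi\ge\lambda'$ everywhere, so $\lambda=\inf_w\phi(w)=\lambda'$.

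It remains to prove the two-sided bound for $t<0$. Since $t<0$ and $\phi\ge\lambda$, we get $\mathcal{G}(z,t) \le \sup_w(Dh(w),z) + t\lambda = h^*(z) + \lambda t$ by (a). For the reverse, if $z\ne 0$ evaluate the supremum along $w = sz$ with $s\ge R/\|z\|$: then $\phi(sz) = \lambda' = \lambda$ while $(Dh(sz),z)\to h^*(z)$ by (a), so $\mathcal{G}(z,t)\ge h^*(z)+\lambda t$; the case $z=0$ is immediate, since $\mathcal{G}(0,t) = \sup_w t\phi(w) = t\lambda = h^*(0)+\lambda t$. Thus $\mathcal{G}(z,t) = h^*(z) + \lambda t = h^*(z)+\lambda' t$ for $t<0$. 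The one real obstacle is the identity $\lambda=\lambda'$: it is precisely what upgrades the formula from ``correct at infinity'' to globally correct, and it rests on the slightly hidden fact that $\phi$ is radially nonincreasing (fact (b)), used together with \eqref{recball}; the remainder is routine one-variable convex calculus, with only a little care needed to stay within $C^1$ in (b).
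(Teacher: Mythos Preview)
Your proof is correct and follows essentially the same architecture as the paper's: reparametrize the supremum \eqref{GG} via $w=x/y$, establish that $\phi(w)=h(w)-(Dh(w),w)$ is radially nonincreasing, combine this with \eqref{recball} to get $\lambda=\lambda'$, and then sandwich $\mathcal{G}(z,t)$ between $h^*(z)+\lambda t$ from above and below. The only differences are in implementation: the paper obtains the radial monotonicity of $\phi$ by quoting the convexity of $\mathcal{G}$ from Proposition~\ref{genextprop} (since $\phi(z/t)=\partial_y\mathcal{G}(z,t)$), whereas you prove it directly by a one-variable Stieltjes argument; and for the bound $(Dh(w),z)\le h^*(z)$ the paper extracts a subsequential limit of $Dh(b/r_j)$ and identifies it as a subgradient of $h^*$, while you use the more elementary route through the global Lipschitz bound \eqref{equibound}. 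Both routes are equally short; yours is slightly more self-contained.
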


Before starting with the proof of the proposition, we need to recall some results concerning the notion of subdifferential at $x \in \R^N$ of a convex function $f: \R^{N} \to \R$.

\subsection{Subdifferentials}\label{SUB} The subdifferential of $f$ at $x$, denoted with $\partial f(x)$, is the collection of those vectors $v \in \R^N$ such that
\[
(v,y-x) \le f(y) - f(x), \quad \forall y \in \R^N.
\]
We will use the following facts concerning the subdifferential. For a convex function with finite values, $\partial f(x) \neq \emptyset$ at all $x \in \R^N$, see \cite[Theorem 23.4]{ROCK}. Conversely, if $f:\R^N \to \R$ is such that $\partial f(x) \neq \emptyset$ at every $x \in \R^N$, then $f$ is convex, since in that case
\[
f(x) = \sup_{y \in \R^N}\sup_{v \in \partial f(y)}\{(v,x - y) + f(y)\}.
\]
As can be seen from the definition of subdifferential,
\[
|f(x) - f(y)|\le \max\left\{\sup_{v \in \partial f(x)}\|v\|,\sup_{w \in \partial f(y)}\|w\|\right\}\|x - y\|.
\]
This, together with the fact that if $K$ is compact, then $\partial f(K) \doteq \bigcup_{x \in K}\partial f(x)$ is compact, see \cite[Lemma A.22]{FIG}, yields the fact that every convex function is locally Lipschitz. Moreover, if $f$ is positively $1$-homogeneous, a simple application of the definition of subdifferential shows that
\begin{equation}\label{lambdasymm}
v \in \partial f(x) \Leftrightarrow v \in \partial f(\lambda x), \quad \forall \lambda > 0, x \in \R^N.
\end{equation}
In particular, combining \eqref{lambdasymm} with the local Lipschitz property of convex functions, we infer that if $f: \R^N \to \R$ is convex and positively-1 homogeneous, $f$ must be globally Lipschitz. Furthermore, using the definition of subdifferential and \eqref{lambdasymm} for $f$ convex and positively-1 homogeneous, it is easy to see that the following \emph{generalized Euler's formula} holds
\begin{equation}\label{euler}
(v,x) = f(x), \quad \forall v \in \partial f(x), \forall x \in \R^N.
\end{equation}
Finally, we recall that at $x$, the convex function $f: \R^N \to \R$ is differentiable if and only if
\[
\partial f(x) = \{Df(x)\},
\]
see \cite[Lemma A.20-A.21]{FIG} and references therein. We can now start the proof of the proposition.

\subsection{Proof of Proposition \ref{genextprop}}
First we assume that $h$ has property (P). $\mathcal{G}$ is convex since it is supremum of linear functions. Moreover, the convexity of $h$ yields the convexity of $G$ on $\R^k\times (0,+\infty)$. Having established that $G$ is convex, the fact that $\mathcal{G}$ as in \eqref{GG} extends $G$ is a classical fact. This proves \eqref{co}. Since $G$ was positively $1$-homogeneous, we have that $\mathcal{G}$ is as well homogeneous. Therefore \eqref{po} is checked, and we only need to prove \eqref{fo}. By \eqref{GG} we see that in order to conclude we only need to show that, for fixed $(z,t) \in \R^{k + 1}$,
\[
(DG(x,y),(z,t)) \le L < + \infty, \quad \forall (x,y) \in \R^k\times (0,+\infty),
\]
where $L$ possibly depends on $(z,t)$. Let us compute $DG$. Firstly we have that $$\partial_{x_i}G(x,y) = \partial_{x_i}h\left(\frac{x}{y}\right).$$ Now, exploiting the convexity of $h$, we can choose any $v \in \R^k$ with $\|v\|= 1$ and write
\[
(Dh(x),v) \le \frac{h(x + sv) - h(x)}{s}, \quad \forall s \in \R^+.
\]
Using the linear growth of $h$, i.e. \eqref{lin}, we bound:
\[
(Dh(x),v) \le \frac{A\|x + sv\| + B + A\|x\| + B}{s}.
\]
Letting $s \to + \infty$, the previous expression yields
\begin{equation}\label{equibound}
(Dh(x),v) \le A,\quad \forall x,v \in \R^n, \|v\|=1.
\end{equation}
Thus, if we can show that $\partial_yG(x,y)$ is uniformly bounded, then we conclude the proof. We compute explicitly, for every $(x,y)\in \R^{k}\times (0,+\infty)$
\[
\partial_yG(x,y) = h\left(\frac{x}{y}\right)  - \left(Dh\left(\frac{x}{y}\right),\frac{x}{y}\right).
\]
We are therefore left to study the boundedness (from below) of the function $z\mapsto h(z) - (Dh(z),z)$, but this is a consequence of \eqref{lim} of property (P).
\\
\\
Finally, let us show the necessity of (P). If $\mathcal{G}$ is convex and extends $G$, then in particular
\[
\mathcal{G}(z,1) = G(z,1) = h(z), \quad \forall z \in \R^k,
\]
hence $h$ is convex. By the discussion of Subsection \ref{SUB}, we know that $\mathcal{G}$ is globally Lipschitz with constant $L > 0$. Since
\[
\mathcal{G}(z,1) = h(z), \quad \forall z \in \R^k,
\] 
we infer that $h$ has linear growth, i.e. it enjoys property \eqref{lin}. Finally, we need to show \eqref{lim}. Since $\mathcal{G}$ extends $G$ in the upper half-space, we obtain
\[
|\partial_yG(x,y)| \le L, \quad \forall (x,y) \in \R^k\times(0,+\infty).
\]
By the definition of $G$, we deduce
\[
|\partial_yG(z,1)| = |h(z) - (Dh(z),z)| \le L, \quad \forall z \in \R^k,
\]
hence \eqref{lim}.

\subsection{Proof of Corollary \ref{repres}}

First we show that $\lambda = \lambda'$. To see this, consider for any $z \neq 0$ the auxiliary function $g(t) \doteq h(tz) - (Dh(tz),tz)$, for $t > 0$. Then, $g$ is non-increasing. Indeed,
\[
g\left(\frac{1}{t}\right) = \partial_y\mathcal{G}(z,t),
\]
and we can use that $\mathcal{G}$ is convex to deduce that $t\mapsto \partial_y\mathcal{G}(z,t)$ is non-decreasing, hence that $t\mapsto g(t)$ is non-increasing. Now, for any $t$ sufficiently large, by assumption \eqref{recball}, we have that
\[
h(tz) - (Dh(tz),tz) = \lambda'.
\]
This shows that
\[
\lambda' = \lim_{t \to +\infty}[h(tz) - (Dh(tz),tz)] = \inf_{t > 0}[h(tz) - (Dh(tz),tz)] \ge \lambda.
\]
In particular, notice that $h(0) = \lim_{t \to 0^+}g(t) \ge \lambda'$. To show the equality between $\lambda$ and $\lambda'$, consider now a sequence $z_n \in \R^k$ such that $a_n \doteq h(z_n) - (Dh(z_n),z_n) \to \lambda$ as $n \to \infty$. If $z_n = 0$ for infinitely many $n$, we can write $\lambda = \lim_{n \to \infty}a_n = h(0) \ge \lambda'$ and the proof is concluded. Otherwise, by the computation above, we have, for every $t \ge 1$
\[
a_n \ge h(tz_n) - (Dh(tz_n),tz_n).
\]
By choosing $t$ in dependence of $z_n$, we can ensure through the assumption \eqref{recball} that
\[
h(tz_n) - (Dh(tz_n),tz_n) = \lambda'.
\]
Therefore,
\[
\lambda = \lim_na_n \ge \lambda'
\]
and the proof of the first part of the Corollary is finished.
\\
\\
Now we wish to show the characterization of $\mathcal{G}$. Fix $(z,t) \in \R^k\times (-\infty,0)$. Let $(x,y) \in \R^k\times (0,+\infty)$. Then, using the definition $G(x,y) = yh\left(\frac{x}{y}\right)$
\begin{equation}\label{expcom}
(DG(x,y),(z,t)) = \left(Dh\left(\frac{x}{y}\right),z\right) + \left(h\left(\frac{x}{y}\right) - \left(Dh\left(\frac{x}{y}\right),\frac{x}{y}\right)\right)t.
\end{equation}
By \eqref{lim}, we get
\[
h\left(\frac{x}{y}\right) - \left(Dh\left(\frac{x}{y}\right),\frac{x}{y}\right) \ge \lambda,
\]
hence, since $t < 0$, then
\[
(D\mathcal{G}(x,y),(z,t)) \le \left(Dh\left(\frac{x}{y}\right),z\right) + \lambda t.
\]
We now show that
\begin{equation}\label{secondpart}
\left(Dh\left(\frac{x}{y}\right),z\right) \le h^*(z).
\end{equation}
Let $a,b \in \R^k$, $r > 0$. Then, using the convexity of $h$,
\[
0 \le \left(Dh(a) - Dh\left(\frac{b}{r}\right), a - \frac{b}{r}\right),
\]
or
\begin{equation}\label{eqr}
0 \le \left(Dh(a) - Dh\left(\frac{b}{r}\right), ra - b\right).
\end{equation}
To conclude \eqref{secondpart}, we might use assumption \eqref{recball}, but let us use a slightly more general argument in order to use the same inequality below. By \eqref{equibound}, we have that $$\left\{Dh\left(\frac{b}{r}\right)\right\}_{r > 0}$$ is an equibounded family of vectors, hence up to subsequences it admits a limit $\lim_{j \to \infty} Dh\left(\frac{b}{r_j}\right) = w \in \R^k$, where $\lim_{j \to \infty}r_j = 0$. Hence,
\begin{equation}\label{interineq}
0 \le \lim_{j \to \infty}\left(Dh(a) - Dh\left(\frac{b}{r_j}\right), r_ja - b\right) =  -(Dh(a) - w,b).
\end{equation}
Now, $w \in \partial h^*(b)$, in fact using the convexity of $h$ we can write
\[
\left(Dh\left(\frac{b}{r_j}\right),\frac{a}{r_j}-\frac{b}{r_j}\right) \le h\left(\frac{a}{r_j}\right) - h\left(\frac{b}{r_j}\right).
\]
Multiplying by $r_j$ and letting $j \to \infty$, we find that $w \in \partial h^*(b)$. By \eqref{euler}, \eqref{secondpart} now follows from \eqref{interineq}. Therefore, we can conclude that, for $y < 0$,
\[
\mathcal{G}(z,t) \le h^*(z) +\lambda t.
\]
To conclude the assertion, we consider for any $y > 0$:
\[
(D\mathcal{G}(z,y),(z,t)) = \left(Dh\left(\frac{z}{y}\right),z\right) + \left(h\left(\frac{z}{y}\right) - \left(Dh\left(\frac{z}{y}\right),\frac{z}{y}\right)\right)t.
\]
If we choose $y$ sufficiently small (in dependence of $z$), once again using \eqref{recball}, we see that
\[
(D\mathcal{G}(z,y),(z,t)) = h^*(z) + \lambda' t = h^*(z) + \lambda t,
\]
the latter being true by the first part of the proof. This concludes the proof of the corollary.

\subsection{Symmetric Extension}

Now we show the link between (PE) and a symmetric extension. Notice that imposing that $h$ admits a $1$-homogeneous and even extension such that $\mathcal{G}(z,1) = h(z)$ forces this extension to have the form
\begin{equation}\label{nec}
\mathcal{G}(z,t) = |t|h\left(\frac{z}{t}\right)
\end{equation}
for $t \neq 0$. If we require that $\mathcal{G}$ is convex too, then it is continuous, hence it becomes uniquely determined on $\{(x,y): y  =  0\}$ as $\mathcal{G}(z,0) = h^*(z)$. Therefore, instead of considering a general convex extension as in \eqref{GG}, we are going to work with the function $\mathcal{G}$ obtained in \eqref{nec}.

\begin{prop}\label{genextpropsym}
$h$ satisfies (PE) if and only if $\mathcal{G}: \R^{k + 1}\to \R$ defined as
\begin{equation}
\mathcal{G}(z,t) =
\begin{cases}
 |y|h\left(\frac{z}{y}\right), &\text{ if } y \neq 0\\
h^*(z), &\text{ if } y = 0
\end{cases}
\end{equation}
is even and convex.
\end{prop}

\begin{proof}
Assume that $h$ satisfies (PE). First we prove that $\mathcal{G}$ is even. This amounts to show that
\begin{equation}\label{symm}
h^*(z) = \lim_{t\to 0^+}\frac{h(tz)}{t} = \lim_{t\to 0^+}\frac{h(-tz)}{t} = h^*(-z).
\end{equation}
To see this, we simply evaluate \eqref{magg} at $z_1 = \frac{-z}{t}$ and $z_2 = \frac{z}{t}$ for any $z \in \R^k$, $t > 0$ to find
\begin{equation}\label{2eq}
2\left(Dh\left(\frac{z}{t}\right),\frac{z}{t} \right) \le h\left(\frac{z}{t}\right) + h\left(-\frac{z}{t}\right).
\end{equation}
We now use the same argument to prove \eqref{secondpart} to see that for a sequence of positive numbers $\{t_j\}_{j \in \N}$ with $\lim_jt_j = 0$,  $\lim_{j \to \infty}Dh\left(\frac{z}{t_j}\right) = w \in \partial h^*(z)$. Therefore, multiplying by $t$ in \eqref{2eq} and passing to the limit along this subsequence, we get
\[
2(w,z) \le h^*(z) + h^*(-z),\quad \forall z \in \R^k.
\]
By \eqref{euler}, $(w,z) = h^*(z)$, and in this way we see that, using the last equation,
\[
2h^*(z) \le h^*(z) + h^*(-z) \Rightarrow h^*(z) \le h^*(-z),\quad \forall z \in \R^k,
\]
that implies \eqref{symm}.
\\
\\
Now we show that $\mathcal{G}$ is convex. We rely on the results of Subsection \ref{SUB}, and we aim to show that at every point $p = (z,t) \in \R^{k + 1}$,
\[
\partial \mathcal{G}(p) \neq \emptyset.
\]
Let first $t > 0$. Since at $p$ the function is differentiable, the only possible candidate for an element of the subdifferential is $v\doteq D\mathcal{G}(p)$. Notice moreover that by the 1-homogeneity of $\mathcal{G}$, $(D\mathcal{G}(p),p)=\mathcal{G}(p)$. Thus we have, for any $q = (x,y) \in \R^{k + 1}$:
\begin{equation}\label{eqhom}
(D\mathcal{G}(p),q-p) \le \mathcal{G}(q) -\mathcal{G}(p) \Leftrightarrow (D\mathcal{G}(p),q) \le \mathcal{G}(q).
\end{equation}
If we establish $(D\mathcal{G}(p),q) \le \mathcal{G}(q)$ for any $y \neq 0$, then we can use the pointwise convergence $$\lim_{y \to 0^+}\mathcal{G}(x,y) = \lim_{y \to 0^+}yh\left(\frac{x}{y}\right) = h^*(x) = \mathcal{G}(x,0)$$ to infer that the inequality holds also for $y = 0$. We therefore compute, for any $y \neq 0$:
\[
(D\mathcal{G}(p),q) = \left(Dh\left(\frac{z}{t}\right),x\right) + h\left(\frac{z}{t}\right)y - \left(Dh\left(\frac{z}{t}\right),\frac{z}{t}\right)y = y\left[h\left(\frac{z}{t}\right) - \left(Dh\left(\frac{z}{t}\right),\frac{z}{t}-\frac{x}{y}\right)\right]
\]
Using \eqref{eqhom}, $v = D\mathcal{G}(p)$ is a supporting hyperplane if and only if
\begin{equation}\label{teq}
(D\mathcal{G}(p), q) = y\left[h\left(\frac{z}{t}\right) - \left(Dh\left(\frac{z}{t}\right),\frac{z}{t}-\frac{x}{y}\right)\right] \le \mathcal{G}(q).
\end{equation}
Following the same argument of the beginning of Proposition \ref{genextprop}, since $h$ is convex, $\mathcal{G}$ is convex on $\R^k\times(0,+\infty)$. Thus \eqref{teq} is surely fulfilled if $y > 0$. If $y < 0$, \eqref{teq} becomes
\[
y\left[h\left(\frac{z}{t}\right)- \left(Dh\left(\frac{z}{t}\right),\frac{z}{t} - \frac{x}{y}\right)\right] \le  \mathcal{G}(q) = -yh\left(\frac{x}{y}\right),
\]
that can be rewritten as
\[
h\left(\frac{z}{t}\right)- \left(Dh\left(\frac{z}{t}\right),\frac{z}{t} - \frac{x}{y}\right) \ge -h\left(\frac{x}{y}\right), \forall (z,t) \in \R^k\times(0,+\infty),(x,y) \in \R^k\times (-\infty,0).
\]
The last condition is equivalent to \eqref{magg}. Now we need to prove that also for points $p = (z,t)$ with $t < 0 $ an element in the subdifferential exists. This is anyway a consequence of the evenness of $\mathcal{G}$ and the proof above, indeed the evenness of $\mathcal{G}$ yields
\[
D\mathcal{G}(p) = -D\mathcal{G}(-p),\quad \forall p \in \R^{k}\times (\R\setminus\{0\}).
\]
Therefore, for any $q = (x,y) \in \R^{k + 1}$,
\begin{align*}
(D\mathcal{G}(p),q - p) &= -(D\mathcal{G}(-p),q - p) =  (D\mathcal{G}(-p),p - q) = (D\mathcal{G}(-p),- q - (-p)) \\
&\le \mathcal{G}(-q)-\mathcal{G}(-p) = \mathcal{G}(q)-\mathcal{G}(p),
\end{align*}
where we exploited the fact that $D\mathcal{G}(-p) \in \partial\mathcal{G}(-p)$, as proved above. Finally, we need to produce an element in the subdifferential at points $p = (z,0)$. To do so, we again use the fact that for any $p' = (z,t)$ with $t > 0$, $q = (x,y) \in \R^{k + 1}$,
\[
(D\mathcal{G}(p'),q - p')\le \mathcal{G}(q) - \mathcal{G}(p').
\]
We only need to observe that $\{D\mathcal{G}(z,t)\}_{t > 0}$ is an equibounded family of vectors. This allows us to choose a sequence $t_j>0$ convergent to $0$ such that $\{D\mathcal{G}(z,t_j)\}_{j \in \N}$ converges to a vector $w \in \R^{k + 1}$. Since $\lim_{t\to 0^+}\mathcal{G}(z,t) = \mathcal{G}(z,0)$, we have
\[
(w,q - p) = \lim_{j \to \infty}(D\mathcal{G}(p_j),q - p_j) \le \lim_{j \to \infty}(\mathcal{G}(q) - \mathcal{G}(p_j)) = \mathcal{G}(q) - \mathcal{G}(p).
\]
where $p_j = (z,t_j), \forall j \in \N$. To show the equi-boundedness of $\{D\mathcal{G}(z,t)\}_{t>0}$, we observe that
\[
D_z\mathcal{G}(z,t) = Dh\left(\frac{z}{t}\right),
\]
that is equibounded in $z$ and $t$ by \eqref{equibound}. Exactly as in the proof of Proposition \ref{genextprop}, we use \eqref{lim} to say that
\[
\lambda \le \partial_t \mathcal{G}(z,t),\quad \forall (z,t) \in \R^k \times (0,+\infty).
\]
Hence we only need to provide a bound from above. To show it, we use the convexity of $h$ to estimate 
\[
\partial_t \mathcal{G}(z,t) = h\left(\frac{z}{t}\right) - \left(Dh\left(\frac{z}{t}\right),\frac{z}{t}\right) = h\left(\frac{z}{t}\right) + \left(Dh\left(\frac{z}{t}\right),0 - \frac{z}{t}\right) \le h\left(\frac{z}{t}\right)+ h(0)- h\left(\frac{z}{t}\right) = h(0),
\]
that provides the desired bound. This finishes the proof of the convexity of $\mathcal{G}$.
\\
\\
To conclude, we need to show the converse statement, i.e. that if $\mathcal{G}$ is even and convex, then $h$ fulfills (PE). The fact that $h$ fulfills \eqref{con}-\eqref{lin}-\eqref{lim} can be proved in a completely analogous way as in Proposition \ref{genextprop}. By Remark \ref{red}, one could also infer $\eqref{lim}$ as a corollary of \eqref{magg}. Finally, to see \eqref{magg}, one can simply follow the chain of logical equivalences of the previous part of the proof. This proves that (PE) is also necessary to the existence of the even extension.
\end{proof}

\subsection{Extension to Geometric Functionals}
Now consider an orthonormal basis of $\Lambda_m(\R^{m + n})$, denoted with $E_1,\dots, E_{\binom{m + n}{m}}$, where $$E_1 \doteq e_1\wedge\dots\wedge e_{m},$$ as done in \eqref{E1}. We define, for every $\tau \in \Lambda_m(\R^{m + n})$
\begin{equation}\label{psiG}
\Psi(\tau)\doteq \mathcal{G}(\langle \tau,E_2\rangle, \dots, \langle \tau,E_{\binom{m + n}{m}}\rangle, \langle \tau,E_1\rangle),
\end{equation}
and consequently the energy
\[
\Sigma_\Psi(T)\doteq \int_{E}\Psi(\vec T(x))\theta(x)d\mathcal{H}^m(x),
\]
for $T = \llbracket E, \vec T,\theta \rrbracket \in \mathcal{R}_m(\R^{n + m})$. For convenience, let us denote
\[
\phi(\tau) \doteq \left(\langle \tau,E_2\rangle, \dots, \langle \tau,E_{\binom{m + n}{m}}\rangle, \langle \tau,E_1\rangle\right).
\]
We have

\begin{prop}\label{Alm}
Let $\mathcal{G}$ be positively-1 homogeneous and convex, and define $\Psi$ as in \eqref{psiG}. Then, $\Sigma_\Psi$ fulfills Almgren's condition \eqref{ALM0}.
\end{prop}
\begin{proof}
Let $R,S \in \mathcal{R}_m(\R^{n + m})$, $\partial R = \partial S $,  $\spt S$ is contained in the vectorsubspace of $\R^n$ associated with a simple $m$ vector $\vec{S}_0$ of $\R^n$, and $\vec{S}(z)=\vec{S}_0$ for $\| S\|$-almost all $z$. Since $\partial R = \partial S$ we have that 
\[ \int \vec{R} \, d\|R\| = \int \vec{S} \, d\|S\| = \mathbb{M}(S) \,\vec{S}_0,\]
compare \cite[5.1.2]{FED}. Note that this implies by the linearity of $\phi$ that \[ \int \phi \circ \vec{R} \, d\|R\| = \int \phi \circ \vec{S} \, d\|S\| = \mathbb{M}(S) \,\phi \circ \vec{S}_0.\]
Now we may use Jensen inequality and the $1$-homogeneity of $\mathcal{G}$ to deduce that 
\begin{align*}
\int \Psi\circ \vec{R} d\|R\| &= 	\int \mathcal{G}\circ \phi \circ \vec{R} d\|R\|\ge \mathcal{G} \left( \int \phi \circ \vec{R} \, d\|R\| \right)  = \mathcal{G} \left( \mathbb{M}(S) \,\phi \circ \vec{S}_0\right)\\
 &= \mathbb{M}(S)\, \mathcal{G}\circ \phi \circ \vec{S}_0 = \int \Phi\circ \vec{S} d\|S\|\,,
\end{align*}
where we used again in the last line that $\mathcal{G}$ is 1-homogeneous. 
\end{proof}

\begin{remark}\label{posvarint}
If $\mathcal{G}$ is even, then $\Sigma_\Psi$ is a well-defined energy on varifolds. Notice that in this case, $\mathcal{G}$ is convex, even and 1-homogeneous. A simple computation in convex analysis shows that this imposes for $\mathcal{G}$ to be positive. This observation is what makes it impossible to extend an integrand $f$ as the one constructed in Section \ref{SCC} to an integrand defined on varifolds using the methods introduced here. 
\end{remark}

\begin{appendix}
\section{Currents, Varifolds and Geometric Functionals}\label{geom}

In this section we give the main definitions concerning currents and varifolds we have used throughout the paper. One can give more general definitions, namely flat, normal currents and general varifolds, see for instance \cite{FED,SIM}, but we limit ourselves to rectifiable currents and varifolds in order to keep the exposition as concise as possible.

\subsection{Multilinear algebra}\label{multa}

Let $n \in \N, m \ge 0$. We denote with $\Lambda_{m}(\R^{n + m})$ the space of $m$-vectors of $\R^{n + m}$, i.e. the vector space given by finite linear combinations of elements of the form
\[
v_{1}\wedge\dots\wedge v_m, \quad v_i \in \R^{n + m}, \forall 1\le i \le m.
\]
We also let $\Lambda_{m}^s(\R^{n + m})\subset \Lambda_{m}(\R^{n + m})$ be the space of non-zero simple $m$-vectors, i.e. all elements $\tau \in \Lambda_{m}(\R^{n + m})$ such that
\[
\tau = v_1\wedge\dots\wedge v_m
\]
for $v_1,\dots, v_m \in \R^{n + m}$. We define a \emph{canonical} basis of $\Lambda_{m}(\R^{n + m})$ as follows. Let $e_1,\dots, e_{n + m}$ be the vectors of the canonical basis of $\R^{n + m}$. Consider any multivector of length $m$, $I = (i_1,\dots,i_m)$, with $1\le i_1<\dots< i_m\le n+ m$. There are $\binom{n + m}{m}$ of these multivectors, and each one defines a simple $m$-vector
\[
E_j = e_{i_1}\wedge\dots\wedge e_{i_m}, \quad j \in \left\{1,\dots, \binom{n + m}{m}\right\}.
\]
It is easy to check that $\left\{E_1,\dots, E_{\binom{n + m}{m}}\right\}$ is a basis for $\Lambda_m(\R^{n + m})$. We also set
\begin{equation}\label{E1}
E_1\doteq e_1\wedge\dots\wedge e_m,
\end{equation}
while the ordering of the other indexes is arbitrary (but fixed).
\\
\\
The vector space $\Lambda_m(\R^{n + m})$ can be endowed with a scalar product that is defined on simple vectors as
\[
(v_1\wedge\dots\wedge v_m,w_1\wedge\dots\wedge w_m) \doteq \det(X),
\]
where $X \in \R^{m\times m}$ is defined as $X_{ij} = (v_i,w_j)$. We define a norm on $\Lambda_m(\R^{n + m})$ by setting $\|\tau\| = \sqrt{(\tau,\tau)}$. Analogously, one introduces the space of $m$-covectors of $\R^{n + m}$, $\Lambda^*_m(\R^{n + m})$, as the linear space generated by the wedge product of $m$ covectors of $\R^{n + m}$. An element $\eta \in \Lambda^*_m(\R^{n + m})$ acts by duality on elements of $\Lambda_m(\R^{n + m})$ in the following way. Let $\eta = \eta^1\wedge\dots\wedge \eta^m$ and $\tau = v_1\wedge\dots \wedge v_m$ (the general case follows by linearity). Then,
\[
\eta(\tau) = \det(Y),
\]
where $Y \in \R^{m\times m}$ is the matrix defined as $Y_{ij}\doteq \eta_i(v_j)$, $\forall 1 \le i,j \le m$.
\\
\\
With these definition at hand, we can consider the space $\mathcal{D}^m(\R^{n + m})$ as the space of smooth $m$-forms of $\R^{n + m}$ with compact support, namely
\[
\mathcal{D}^m(\R^{n + m}) \doteq C^\infty_c(\R^{n + m},\Lambda^*_m(\R^{n + m})).
\]
We endow $\Lambda_m^*(\R^{n + m})$ with a norm given by
\[
\|\eta\| \doteq \sup_{\tau \in \Lambda^s_m(\R^{n + m})\setminus\{0\}}\frac{|\langle\eta,\tau\rangle|}{\|\tau\|},
\]
hence we can consider on $\mathcal{D}^m(\R^{n + m})$ the norm
\[
\|\omega\|_\infty \doteq \sup_{x \in \R^{n + m}}\|\omega(x)\|.
\]

\subsection{Planes and rectifiable sets}

We denote with $\mathbb{G}(m,n+ m)$ the space of unoriented $m$-planes of $\R^{n + m}$. In \cite{DLDPKT}, we used the identification of $\mathbb{G}(m, n + m)$ with the space of orthogonal projections on $m$-planes
\[
\left\{P \in \R^{(m + n) \times (m + n)}: P =P^T, P^2 = P, \rank(P) = \tr(P) = m \right\}.
\]
It is not difficult to show that $\Lambda_m^s(\R^{n + m})$ can be identified with the space of \emph{oriented} $m$-dimensional planes of $\R^{m + n}$, see \cite[Section 2.1]{MGS1}. It is thus natural to introduce the two-to-one map
\begin{equation}\label{fiden}
f: \Lambda_m^s(\R^{n + m}) \to \mathbb{G}(m,n+ m)
\end{equation}
that takes $v_1\wedge\dots\wedge v_m \in \Lambda_m^s(\R^{n + m})$ to the projection on the $m$-plane spanned by $v_1,\dots, v_m$. Notice that $f$ is not injective since $f(\tau) = f(-\tau)$, $\forall \tau \in \Lambda^s_m(\R^{n + m})$.
\\
\\
We recall that a set $E \subset \R^{m + n}$ is called rectifiable of dimension $m$ if $$E = E_0\cup \bigcup_{j \ge 1}F_j(E_j),$$ where $\mathcal{H}^m(E_0) = 0$, $F_j \in C^1(\R^m,\R^{n + m})$, and $E_j \subset \R^{m + n}$ is Borel. To such a set $E$ it is possible to associate naturally a notion of approximate tangent plane, i.e. a map
\[
x\mapsto T_xE \in \mathbb{G}(m,n+ m).
\]
For the definition of $T_xE$, we refer the reader to \cite[Section 3.1]{SIM}. An \emph{orientation} $x\mapsto \vec T(x)$ of $T_xE$ is a Borel map $\vec T \in L^\infty(E,\Lambda_m^s(\R^{n + m}))$ with $\|\vec T(x)\| = 1$ for $\mathcal{H}^m\llcorner E$ a.e. $x \in \R^{n + m}$, and $\vec T(x) \in f^{-1}(T_xE)$, for $\mathcal{H}^m\llcorner E$ a.e. $x \in \R^{n + m}$, where $f$ is the map defined in \eqref{fiden}.

\subsection{Varifolds}

A $m$-dimensional rectifiable varifold $V$ is a measure on $\R^{n + m}\times \mathbb{G}(n + m,m)$ given by
\begin{equation}\label{VAR}
V(g) \doteq \int_{E}g(x,T_xE)\theta(x)d\mathcal{H}^m(x), \quad \forall g \in \R^{n + m}\times \mathbb{G}(n + m,m)
\end{equation}
where $E$ is $m$ dimensional rectifiable set and $\theta \in L^1(E;\mathcal{H}^m\llcorner E)$. The varifold is called \emph{integer rectifiable} if in addition $\theta$ has values in $\mathbb{N}\setminus\{0\}$. The notation for the varifold $V$ defined as in \eqref{VAR} is
\[
V =\llbracket E,\theta \rrbracket.
\]

\subsection{Currents}\label{currents}

A rectifiable current of dimension $m$, denoted by $T$, is a linear functional over $\mathcal{D}^m(\R^{n + m})$ represented as:
\[
T(\omega) \doteq \int_E\langle\omega(x),\vec T(x)\rangle\theta(x)d\mathcal{H}^m(x),
\]
where $E$ is an $m$-rectifiable subset of $\R^{n + m}$, $\vec T(x)$ is an orientation of $T_xE$, and $\theta \in L^1(E;\mathcal{H}^m\llcorner E)$. Such a current $T$ is denoted as 
\[
T =\llbracket E,\vec T,\theta \rrbracket.
\]
The mass of the current $T$ is defined as
\[
\M(T) \doteq \int_E|\theta(x)|d\mathcal{H}^m(x),
\] 
and we introduce the notion of boundary $\partial T$ of a rectifiable current $T$ as the $m - 1$ dimensional current
\[
\partial T( \omega) \doteq T(d\omega).
\]
We restrict our attention to the space of \emph{integer rectifiable currents} of dimension $m$, $\mathcal{R}_m(\R^{n + m})$, defined as the space of $m$-dimensional rectifiable currents $T$ with finite mass and for which $\theta$ has values in $\mathbb{N}\setminus \{0\}$.
\\
\\
Given $T = \llbracket E,\vec T,\theta\rrbracket$ and an injective vector-field $X \in C^1(\R^{n + m},\R^{n + m})$, we define the pushforward of $T$ as the current $X_{\#}(T) \in \mathcal{R}_m(\R^{n + m})$ defined by
\[
X_{\#}(T) \doteq \llbracket X(E), \vec\xi, \theta\circ X^{-1} \rrbracket,
\]
where
\begin{equation}\label{orienting}
\vec\xi(X(x))\doteq \frac{DX(x)v_1(x)\wedge\dots\wedge DX(x)v_m(x)}{\|DX(x)v_1(x)\wedge\dots\wedge DX(x)v_m(x)\|},\quad \forall x \in E,
\end{equation}
if $\vec T(x) = v_1(x)\wedge\dots\wedge v_m(x)$. Analogously, for a varifold $V =\llbracket E,\theta \rrbracket$,
\[
X_\#V \doteq \llbracket X(E), \theta\circ X^{-1}\rrbracket.
\]
\\
\\
Notice that to every current $T \in \mathcal{R}_m(\R^{n + m})$ one can associate an integer rectifiable varifold $V_T$ in the obvious way
\[
V_T(g) \doteq \int_{E}g(x,f(\vec\tau(x)))\theta(x)d\mathcal{H}^m(x),
\]
where $f$ is the map defined in \eqref{fiden}.
\\
\\
Let us explain how to give to a graph of a Lipschitz map a structure of current, hence also of varifold. We essentially follow the theory developed in \cite{MGS1,MGS2}. We also refer the reader to \cite{DLDPKT}, where this discussion was made for giving the graph a structure of varifold. Let $\Omega\subset \R^m$ be open and bounded and let $u \in \Lip(\Omega,\R^n)$. Then, the graph of $u$ defined as
\[
\Gamma_u = \{(x,u(x)):x\in \Omega\}
\]
is $m$-rectifiable. Furthermore, as proved in \cite[Sec. 1.5, Th. 5]{MGS1} its approximate tangent plane is, at a.e. $x_0 \in \Omega$, given by the orthogonal projection on
\[
\pi(x_0)\doteq \spn\{\partial_1(x,u(x))|_{x = x_0},\dots, \partial_m(x,u(x))|_{x = x_0}\} = \spn\{(f_1,\partial_1u(x_0))^T,\dots, (f_m,\partial_m u(x_0))^T\},
\]  
where $f_1,\dots,f_m$ are the elements of the canonical basis of $\R^m$. Define $v_i(x)\doteq (f_i,\partial u_i(x))^T$. The orientation we define on $\pi(x_0)$ is the natural one:
\begin{equation}\label{orientinggraph}
\vec\xi_u(x)\doteq \frac{v_1(x)\wedge\dots\wedge v_m(x)}{\|v_1(x)\wedge\dots\wedge v_m(x)\|}.
\end{equation}
Given a Borel function $\theta \in L^1(\Gamma_u;\mathcal{H}^m\llcorner \Gamma_u)$, we define the current $T_{u,\theta} = \llbracket \Gamma_u,\vec\xi_u,\theta \rrbracket$ and $\beta(x) \doteq \theta(x,u(x))$. Through the area formula, see for instance \cite[Proposition 6.4]{DLDPKT}, we have
\[
\M(T_{u,\theta}) = \int_{\Omega}\mathcal{A}(Du(x))\beta(x)dx,
\]
where
\begin{equation}\label{e:area}
\mathcal{A} (X) \doteq \sqrt{\det ({\rm id}_{\mathbb R^{m\times m}} + X^T X)}
\end{equation}
is the \emph{area function}. Notice that in the case $n = m = 2$, $\mathcal{A}$ has the form \eqref{area}. In particular, by the definition of norm of a $m$-vector, we notice that
\begin{equation}\label{orientinggraph2}
\mathcal{A}(Du) = \|v_1\wedge\dots \wedge v_m\|,
\end{equation}
where we have used the notation of \eqref{orientinggraph}.

\subsection{Geometric Functionals}\label{GM}

Given a smooth and $1$-homogeneous function $\Psi: \Lambda_s(\R^{n + m}) \to \R$, we can define the functional on rectifiable currents $T = \llbracket E, \vec T, \theta \rrbracket$
\begin{equation}\label{ENCUR}
\Sigma_\Psi(T)\doteq \int_{E}\Psi(\vec T(x))\theta(x)d\mathcal{H}^m(x)= \int \Psi(\vec{T}) \, d\|T\|,
\end{equation}
as done in $\eqref{EN}$. On varifolds, given a smooth integrand $F: \mathbb{G}(m,n+m)\to \R$, the counterpart of the previous energy has the form
\begin{equation}\label{ENVAR}
\Sigma'_F(V)\doteq \int_{E}F(T_xE)\theta(x)d\mathcal{H}^m(x),
\end{equation}
if $V = \llbracket E, \theta\rrbracket$. In particular, any even integrand $\Psi:\Lambda_s(\R^{n + m}) \to \R$ as above allows us to define a functional on varifold too. The minimal hypotheses that one requires on an integrand $\Psi$ to get lower semicontinuity of the energy $\Sigma_\Psi$, see \cite[Section 5.1]{FED}, is that $\Psi$ has positive values and it satisfies \emph{Almgren's ellipticity condition}, i.e.
\begin{equation}\label{ALM0}
\Sigma_\Psi(T)-\Sigma_\Psi(Q) \ge 0,
\end{equation}
whenever $\partial T = \partial Q$, $Q$ has support contained in an $m$ subspace of $\R^{n + m}$ whose orienting $m$-vector is $\vec\tau = v_1\wedge\dots\wedge v_m$ and the orientation of $Q$ is given by $\tau$. We say it satisfies a uniform Almgren ellipticity condition if there exists $\eps > 0$ such that
\begin{equation}\label{UALM}
\Sigma_\Psi(T)-\Sigma_\Psi(Q) \ge \eps(\M(T) - \M(Q)).
\end{equation}
We give now the definition of stationarity in the sense of currents (or varifolds). Fix an energy $\Sigma_\Psi$ and let $\mathcal{U} \subset \R^{m +n }$ be open. Given any function $g \in C^\infty_c(\mathcal{U},\R^{n + m})$, we define the flow $X_\eps(x) \doteq \gamma_x(\eps)$, where $\gamma_x$ is the solution of the ODE
\begin{equation}\label{flowg}
\begin{cases}
\gamma'(t) = g(\gamma(t))\\
\gamma(0) = x.
\end{cases}
\end{equation}
We define the variation of $T$ with respect to the vector field $g \in C^1_c(\mathcal{U}; \R^{m + n})$ as
\begin{equation}\label{delta}
[\delta_\Psi T] (g) \doteq \lim_{\varepsilon \to 0}\frac{\Sigma_\Psi((X_\varepsilon)_\#T) - \Sigma_\Psi(T)}{\varepsilon}.
\end{equation}

Finally, the current $T$ is said to be \emph{stationary in $\mathcal{U}$} if $[\delta_\Psi T] (g)=0, \forall g \in C^1_c(\mathcal{U}; \R^{m + n})$. With obvious modifications, this definition holds for varifolds as well.
\\
\\
A simple computation shows the following characterization of the first variation of a geometric functional

\begin{lemma}\label{expvar}
Let $T = \llbracket T,\vec\tau,\theta \rrbracket$ with $\vec \tau = \tau_1\wedge\dots\wedge \tau_m$ and $\|\vec\tau(x)\| = 1$. For any, $g \in C_c^1(\mathcal{U},\R^{m + n})$,
\begin{align*}
[\delta_\Psi T](g) = \sum_{i = 1}^m\int_{E}\langle d\Psi(\vec\tau(x)),\tau_1(x)\wedge\dots\wedge Dg(x)\tau_i(x)\wedge\dots\wedge \tau_m(x)\rangle\theta(x)d\mathcal{H}^m(x).
\end{align*}
\end{lemma}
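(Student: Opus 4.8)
The plan is to unwind the definition of the pushforward in \eqref{orienting}, change variables via the area formula, exploit the positive $1$-homogeneity of $\Psi$, and finally differentiate under the integral sign.

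First I would fix $g\in C^1_c(\mathcal U,\R^{m+n})$ and let $X_\eps$ be the associated flow from \eqref{flowg}; for $\eps$ in a neighbourhood of $0$ each $X_\eps$ is a $C^1$ diffeomorphism of $\R^{m+n}$. Writing $\vec\xi_\eps$ for the orientation of $(X_\eps)_\#T$ given by \eqref{orienting}, the area formula (as in \cite[Proposition 6.4]{DLDPKT}) gives
\[
\Sigma_\Psi((X_\eps)_\#T)=\int_{X_\eps(E)}\Psi(\vec\xi_\eps)\,(\theta\circ X_\eps^{-1})\,d\mathcal H^m=\int_E\Psi\big(\vec\xi_\eps(X_\eps(x))\big)\,J_\eps(x)\,\theta(x)\,d\mathcal H^m(x),
\]
where $J_\eps(x)\doteq\|DX_\eps(x)\tau_1(x)\wedge\dots\wedge DX_\eps(x)\tau_m(x)\|$ is the tangential Jacobian of $X_\eps$ along $T_xE$. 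Since by \eqref{orienting} one has $J_\eps(x)\vec\xi_\eps(X_\eps(x))=DX_\eps(x)\tau_1(x)\wedge\dots\wedge DX_\eps(x)\tau_m(x)$ and $\Psi$ is positively $1$-homogeneous, the integrand collapses to $\Psi(DX_\eps(x)\tau_1(x)\wedge\dots\wedge DX_\eps(x)\tau_m(x))\,\theta(x)$, so that
\[
\Sigma_\Psi((X_\eps)_\#T)=\int_E\Psi\big(DX_\eps(x)\tau_1(x)\wedge\dots\wedge DX_\eps(x)\tau_m(x)\big)\,\theta(x)\,d\mathcal H^m(x).
\]

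Next I would differentiate in $\eps$ at $\eps=0$. From \eqref{flowg} one has $X_0=\id$ and $\partial_\eps\big|_{\eps=0}X_\eps=g$, hence, by smooth dependence of the flow on $\eps$ and the interchange of $\eps$- and $x$-derivatives, $\partial_\eps\big|_{\eps=0}DX_\eps=Dg$. Using that $\Psi$ is smooth, the chain rule, and the multilinearity of the wedge product,
\[
\frac{d}{d\eps}\Big|_{\eps=0}\Psi\big(DX_\eps\tau_1\wedge\dots\wedge DX_\eps\tau_m\big)=\sum_{i=1}^m\big\langle d\Psi(\vec\tau(x)),\tau_1(x)\wedge\dots\wedge Dg(x)\tau_i(x)\wedge\dots\wedge\tau_m(x)\big\rangle
\]
pointwise for $\mathcal H^m$-a.e.\ $x\in E$. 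The passage of the derivative inside the integral is justified by dominated convergence: since $g$ has compact support, the family $\{X_\eps\}$ is uniformly bi-Lipschitz for $\eps$ near $0$, so both the integrand above and its $\eps$-derivative are bounded on $\supp g$ by a constant multiple of $|\theta|\in L^1(E)$ (and the integrand is $\eps$-independent, equal to $\Psi(\vec\tau)\theta$, off $\supp g$). Combining the last two displays with the definition \eqref{delta} of $[\delta_\Psi T](g)$ then yields exactly the stated formula.

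The only steps requiring care, and the mildest of obstacles, are the two interchanges: first differentiating under the integral sign, and second passing the $\eps$-derivative through $D$ to obtain $\partial_\eps\big|_{\eps=0}DX_\eps=Dg$. Both are routine — the former from the compact support of $g$ together with $\theta\in L^1$, the latter from standard ODE theory on the smooth dependence of flows on parameters — but they are what turns the formal computation into a rigorous one. Everything else is the algebra of $m$-vectors already recalled in Subsection \ref{multa}.
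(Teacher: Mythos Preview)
Your proof is correct and supplies precisely the ``simple computation'' the paper alludes to but does not write out; the paper states Lemma~\ref{expvar} without proof, prefacing it only with ``A simple computation shows the following characterization of the first variation of a geometric functional.'' Your argument---pulling back via the area formula, absorbing the tangential Jacobian using the positive $1$-homogeneity of $\Psi$, and then differentiating under the integral sign using the multilinearity of the wedge product---is exactly the standard route and is what the authors have in mind.
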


\subsection{Functionals on graphs}
It was shown in \cite[Section 6]{DLDPKT} that from a functional defined on varifolds, one can define a functional on graphs, simply using the area formula. To do so, we introduced the map $h:\R^{n\times m} \to \R^{(n + m)\times (n + m)}$ defined as
\[
h(X)\doteq M(X)S(X)M(X)^T
\]
where
\begin{equation}\label{MS}
M(X)\doteq
\left(
\begin{array}{c}
\id_m\\
X
\end{array}
\right)\quad \text{ and }\quad S(X)\doteq(M(X)^TM(X))^{-1},
\end{equation}
or, more explicitely,
\begin{equation}\label{hform}
h(X) =  \left[\begin{array}{c|c}
S(X)&S(X)X^T\\ \hline
XS(X)&XS(X)X^T
\end{array}\right].
\end{equation}
This represents the orthogonal projection on the plane $$\tau(X) = \spn\{M(X)^Te_1,\dots, M(X)^Te_{n + m}\},$$ and is the parametrization of one chart of $\mathbb{G}(m,n+m)$. If $F$ is an integrand as in $\eqref{ENVAR}$, we can define
\begin{equation}\label{fF}
f(X) \doteq F(h(X))\mathcal{A}(X),
\end{equation}
where $\mathcal{A}$ is the area element defined in \eqref{e:area}. The following holds
\begin{equation}\label{eqvar}
\Sigma'_F(V_{u,\theta}) = \int_{\Omega}f(Du(x))\beta(x)dx,
\end{equation}
for every $V_{u,\theta} = \llbracket\Gamma_u,\theta\rrbracket$, where $\beta(x) = \theta(x,u(x))$. In \cite[Proposition 6.6]{DLDPKT}, we proved the previous equality in the case $\theta \equiv 1$, but the case with multiplicity holds with the same proof. One can do the same for functionals defined on currents in the following way. Let $\Psi \in C^\infty(\Lambda_m^s(\R^{n + m}))$ and associate, to $X \in \R^{n\times m}$, the simple vector
\[
W(X) \doteq M^1(X)\wedge\dots\wedge M^m(X),
\]
where $M^i(X)$ denotes the $i$-th column of the matrix $M(X)$ defined in \eqref{MS}. If we define
\begin{equation}\label{fpsi}
f(X) \doteq \Psi(W(X))\mathcal{A}(X),
\end{equation}
the area formula once again yields the equality
\begin{equation}\label{eqcurr}
\Sigma_\Psi(T_{u,\theta}) = \int_{\Omega}f(Du(x))\beta(x)dx,
\end{equation}
for every $T_{u,\theta} = \llbracket\Gamma_u,\vec\xi_u,\theta\rrbracket$, where $\beta(x) = \theta(x,u(x))$.

Finally, let us discuss the link between stationarity for geometric objects and stationarity in the graph sense. We refer the interested reader to \cite[Proposition 6.8]{DLDPKT} for a more precise statement in the case of multiplicity $1$ graphs.

\begin{prop}\label{vargen}
Let $F: \mathbb{G}(m,n+m) \to \R$ or $\Psi: \Lambda_m^s(\R^{n + m})\to \R$ be given and define $f$ through formula \eqref{fF} or \eqref{fpsi}, respectively. Let $\Omega$ be a Lipschitz, bounded, open subset of $\R^m$ and $\beta \in L^1(\Omega,\R^+)$. A map $u\in\Lip(\Omega,\mathbb{R}^n)$ satisfies
\begin{equation*}
\begin{cases}
\displaystyle \int_{\Omega}\langle Df(D u),D v\rangle\beta(x) dx = 0&\forall v\in C^1_c(\Omega,\mathbb{R}^n) \vspace{1mm}\\ 
\displaystyle \int_{\Omega}\langle Df(D u), D u D \phi(x)\rangle\beta(x) dx - \int_{\Omega}f(D u)\dv(\phi)\beta(x) dx = 0\;\; &\forall \phi\in C_c^1(\Omega,\mathbb{R}^m),
\end{cases}
\end{equation*}
if and only if the rectifiable varifold $V_{u,\theta} = \llbracket \Gamma_u,\theta \rrbracket$ or the rectifiable current $T_{u,\theta} = \llbracket \Gamma_u,\vec\xi_u,\theta \rrbracket$, where $\theta(x,y) = \beta(x), \forall (x,y) \in \R^{m + n}$, are stationary with respect to $\Sigma'_F$ or $\Sigma_\Psi$, respectively.
\end{prop}
\begin{proof}
Since the proof is essentially the same of \cite[Proposition 6.8]{DLDPKT}, we only sketch it. In \cite[Proposition 6.8]{DLDPKT}, only the varifold case was considered, hence let us consider the case of functionals defined on currents here.
\\
\\
\indent\emph{Step 1:} Reduction to special vector fields.
\\
\\
Define, for any $g\in C_c^1(\Omega\times\mathbb{R}^n,\mathbb{R}^{m + n})$, $g = (g_1,\dots,g_{n + m},)$, two fields $g^1 \doteq (g_1,\dots,g_m,0,\dots,0)$ and $g^2 \doteq (0,0,g_{m + 1},\dots, g_{n + m})$, so that $g = g^1 + g^2$. From now on, consider $g$ fixed. From Lemma \ref{expvar}, we see that the first variation $[\delta_\Psi T]$ (see the notation introduced in \eqref{delta}) enjoys the following properties:
\begin{equation}\label{linvar}
[\delta_\Psi T_u](g) = [\delta_\Psi T_u](g^1) + [\delta_\Psi T_u](g^2). 
\end{equation}
and
\begin{equation}\label{locality}
[\delta_\Psi T_u](g) = [\delta_\Psi T_u](h),\qquad \forall g,h \in  C_c^1(\Omega\times\mathbb{R}^n,\mathbb{R}^{m + n}), g|_{\spt(\Gamma_u)} = h|_{\spt(\Gamma_u)}\, .
\end{equation}
\eqref{linvar} is trivial, while to show \eqref{locality}, simply notice that if
\[
g|_{\spt(\Gamma_u)} = h|_{\spt(\Gamma_u)},
\]
then 
\begin{equation}\label{tangent}
Dg(x,u(x))w = Dh(x,u(x))w, \text{ for a.e. } x \in \Omega \text{ and } w \in T_{(x,u(x))}\Gamma_u.
\end{equation}
By exploiting the explicit form of the first variation written in Lemma \ref{expvar}, \eqref{locality} follows at once. From \eqref{locality} we conclude that it suffices to consider the first variation of the current $T_u$ for vector fields $g$ of the form
\begin{equation}\label{g}
g(x,y) = \chi(y)G(x,u(x)),
\end{equation}
for $G \in C_c^1(\Omega\times\mathbb{R}^n,\mathbb{R}^{n + m})$, and $\chi \in C^\infty_c(\R^n)$, $\chi(y) \equiv 1$ on $B_{2M}(0) \subset \R^n$ and $\chi(y)\equiv 0$ on $B_{3M + 1}(0)$, where  $M \doteq \max_{x \in \pi(\spt(G))}\|u(x)\|$. $\pi: \R^{m + n} \to \R^m$ here denotes the projection $\pi(x,y) \doteq x, \forall (x,y) \in \R^{m + n}, x \in \R^m,y\in \R^n$. 
\\
\\
\indent\emph{Step 2:} Inner variations.
\\
\\
We let $X_\eps$ be the flow generated by $g^1$, for $g$ as in \eqref{g}. It is easy to see that
\[
X_\eps(x,u(x)) = (Z_\varepsilon(x),u(x)), \forall x \in \Omega,
\]
where $Z_\varepsilon$ is the flow generated by the field $x\mapsto G^1(x,u(x))$. Using this information, one readily checks that
\[
(X_\varepsilon)_\#T_u = \llbracket X_\varepsilon(\Gamma_u),\vec\tau_\eps,\theta\circ (X_\eps)^{-1}\rrbracket = \llbracket \Gamma_{u\circ Z_{-\varepsilon}(\cdot)}, \vec\xi_{u \circ Z_{-\eps}}, \theta\circ(\cdot, Z_{-\eps}(\cdot))\rrbracket = T_{u\circ Z_{-\eps},\theta\circ (\cdot,Z_{-\eps}(\cdot))}.
\]
Through formula \eqref{eqcurr}, we see that
\begin{align*}
\Sigma_\Psi(T_{u\circ Z_{-\eps},\theta\circ (\cdot,Z_{-\eps}(\cdot))}) & = \int_\Omega f(D(u\circ Z_{-\eps})) \beta\circ Z_{-\eps}(x) \, dx\,\\
&= \int_\Omega f(Du(x)DZ_{-\eps}(Z_\eps(x))) \, \beta(x) \det(DZ_\eps) \, dx \,. 
\end{align*}
By taking the derivative at $\eps = 0$ of the previous expression, we get
\begin{equation}\label{eqinner}
[\delta_\Psi T](g^1) =  \int_{\Omega}\langle Df(Du), Du D(G^1(x,u(x)))\rangle\beta(x) dx - \int_{\Omega}f(D u)\dv(G^1(x,u(x)))\beta(x) dx
\end{equation}
\\
\\
\indent\emph{Step 3:} Outer variations.
\\
\\
Similarly to the case above, consider the flow $Y_\eps$ generated by $g^2$, for $g$ as in \eqref{g}. Then, one checks that
\[
Y_\varepsilon(x,u(x)) = (x, u(x) + \varepsilon G^2(x,u(x)),
\]
and hence
\[
(Y_\varepsilon)_\#T_u = \llbracket Y_\varepsilon(\Gamma_u),\vec\tau_\eps,\theta\circ Y_{-\eps}\rrbracket = \llbracket \Gamma_{ v_{\varepsilon}},\vec\xi_{v_\eps}, \theta\circ Y_{-\eps}\rrbracket =T_{v_\eps,\theta\circ Y_{-\eps}}.
\]
By \eqref{eqcurr}, we write
\[
\Sigma((Y_\eps)_\#T_u)= \int_\Omega f(Du+\eps D (G^2(x,u(x)))) \beta \, dx,
\]
whose derivative at $\eps = 0$ yields
\begin{equation}\label{eqouter}
[\delta_\Psi T](g^2)  =\int_{\Omega}\langle Df(D u), D (G^2(x,u(x)))\rangle\beta(x) dx.
\end{equation}
\\
\\
Now the Proposition follows at once from \eqref{linvar}-\eqref{locality}-\eqref{eqinner}-\eqref{eqouter}.

\end{proof}

\end{appendix}
\bibliographystyle{plain}
\bibliography{Last}
\end{document}